\documentclass[11pt,reqno]{amsart}

\usepackage[T1]{fontenc}
\usepackage{lmodern}
\usepackage{microtype}
\usepackage[margin=1.05in,headheight=14pt]{geometry}
\usepackage{amsmath,amssymb,mathtools}
\usepackage{booktabs,tabularx,array}
\usepackage{enumitem}
\usepackage[dvipsnames]{xcolor}
\usepackage[
  colorlinks=true,
  linkcolor=MidnightBlue,
  citecolor=BrickRed,
  urlcolor=RoyalBlue
]{hyperref}
\usepackage[nameinlink,noabbrev]{cleveref}

\usepackage{mathrsfs}

\usepackage{tikz-cd}

\setlist[itemize]{leftmargin=1.45em,itemsep=2pt,topsep=4pt}
\setlist[enumerate]{
  leftmargin=1.75em,
  itemsep=2pt,
  topsep=4pt,
  font=\normalfont
}

\newtheorem{theorem}{Theorem}[section]
\newtheorem{proposition}[theorem]{Proposition}
\newtheorem{lemma}[theorem]{Lemma}
\newtheorem{corollary}[theorem]{Corollary}
\theoremstyle{definition}
\newtheorem{definition}[theorem]{Definition}
\theoremstyle{remark}
\newtheorem{remark}[theorem]{Remark}
\newtheorem{example}[theorem]{Example}
\newtheorem{question}[theorem]{Question}

\newcommand{\cE}{\mathcal E}
\newcommand{\cF}{\mathcal F}
\newcommand{\cG}{\mathcal G}
\newcommand{\cH}{\mathcal H}
\newcommand{\cO}{\mathcal O}

\newcommand{\bR}{\mathbb R}
\newcommand{\sS}{\mathscr S}
\newcommand{\sV}{\mathscr V}
\newcommand{\Exc}{\operatorname{Exc}}
\newcommand{\Supp}{\operatorname{Supp}}
\newcommand{\Sing}{\operatorname{Sing}}
\newcommand{\coeff}{\operatorname{coeff}}
\newcommand{\vol}{\operatorname{vol}}
\newcommand{\tang}{\mathrm{tang}}

\newcommand{\red}{\mathrm{red}}
\newcommand{\mult}{\mathrm{mult}}
\newcommand{\Aut}{\mathrm{Aut}}
\newcommand{\Bir}{\mathrm{Bir}}

\title[Birational Automorphism Bounds via Pluricanonical Indices]
{Birational Automorphism Bounds for General-Type Foliations on Surfaces
via Pluricanonical Indices}

\author{Shi Xu}
\address{Yau Mathematical Sciences Center, Tsinghua University,
Beijing 100084, China}
\email{shixumath@163.com; xushiymsc@mail.tsinghua.edu.cn}
\date{\today}

\subjclass[2020]{Primary 37F75; Secondary 14C20, 14E07, 32S65.}

\keywords{Foliated surfaces, adjoint volumes, birational automorphism groups,
quotient foliations, Zariski decompositions, pluricanonical indices,
cluster formulas}

\begin{document}

\begin{abstract}
Let $\mathcal{F}$ be a canonical foliation of general type on a smooth
projective surface $X$, and write
$\mathrm{vol}(\mathcal{F}):=\mathrm{vol}(K_{\mathcal{F}})$.  Let
$G\subseteq\operatorname{Bir}(X,\mathcal{F})$ be a finite subgroup, and let
$\mathcal{G}:=\mathcal{F}/G$ be the quotient foliation in the birational sense.
For a canonical foliation $\mathcal{H}$, define its $r$-th pluricanonical
index by
\[
  \delta_r(\mathcal{H})
  :=
  \min
  \bigl\{
    m\in\mathbb{Z}_{>0}
    \mid
    h^0(mK_{\mathcal{H}})\geq r
  \bigr\},
\]
where $\min\varnothing:=\infty$.  For an arbitrary foliation, these
indices are computed on any canonical birational model.

If $\kappa(\mathcal{G})\geq0$, we prove
\[
  |G|
  \leq
  \begin{cases}
    4\delta_1(\mathcal{G})\,\mathrm{vol}(\mathcal{F}),
      &\kappa(\mathcal{G})=0,\\[1mm]
    \displaystyle
    \frac{4}{3}\delta_2(\mathcal{G})\,\mathrm{vol}(\mathcal{F}),
      &\kappa(\mathcal{G})=1,\\[3mm]
    \delta_2(\mathcal{G})^2
    \bigl(1+\delta_2(\mathcal{G})\bigr)\,
    \mathrm{vol}(\mathcal{F}),
      &\kappa(\mathcal{G})=2.
  \end{cases}
\]
Since $\operatorname{Bir}(X,\mathcal{F})$ is finite, one may in particular
take $G=\operatorname{Bir}(X,\mathcal{F})$.  
When $\kappa(\mathcal{G})=0$ or $1$, the effective bounds
$\delta_1(\mathcal{G})\leq12$ and $\delta_2(\mathcal{G})\leq42$ give
\[
  |G|\leq48\,\mathrm{vol}(\mathcal{F})
  \qquad\text{and}\qquad
  |G|\leq56\,\mathrm{vol}(\mathcal{F}),
\]
respectively.

The main new ingredient is a cluster formula for adjoint volumes,
which yields index-dependent lower bounds for tangency-free
foliated surface pairs whose underlying foliation has Kodaira
dimension zero or one.
\end{abstract}

\maketitle

\setcounter{tocdepth}{1}

\tableofcontents

\section{Introduction}\label{sec:introduction}

Throughout the paper, all varieties are defined over $\mathbb C$.

\subsection*{Background and motivation}

A classical problem in birational geometry is to bound automorphism
groups in terms of canonical volumes.  For a minimal surface $S$ of
general type, Xiao proved the explicit linear estimate
\[
  |\operatorname{Aut}(S)|
  \leq
  42^2K_S^2
  =
  42^2\vol(K_S);
\]
see \cite{Xia94,Xia95}.  Since a minimal surface of general type is
the unique minimal model in its birational class, every birational
self-map of $S$ is biregular; see
\cite[Proposition~III.4.6]{BHPV04}.  Thus
$\operatorname{Bir}(S)=\operatorname{Aut}(S)$, and Xiao's estimate may
equivalently be stated for the birational automorphism group.

More generally, for every fixed dimension $n$, the theorem of
Hacon--McKernan--Xu gives a constant $c_n$ such that
\[
  |\operatorname{Bir}(V)|
  \leq
  c_n\vol(K_V)
\]
for every $n$-dimensional projective variety $V$ of general type;
see \cite{HMX13}.  These results are closely related to effective
pluricanonical maps and to the DCC for canonical volumes
\cite{HMX14}.

It is natural to ask for analogous bounds for foliations.  Pereira and
Fern\'andez S\'anchez proved that the group of birational
transformations preserving a foliation of general type is finite
\cite[Theorem~1]{PS02}.  Several quantitative results are also known
under additional assumptions.  
When $K_{\cF}$ is ample and
$\Aut(X,\cF)$ is finite, Corr\^ea and Fassarella obtained an explicit
upper bound for its order depending on $K_{\cF}^2$ and
$K_{\cF}\cdot K_X$ \cite[Theorem~4.2]{CF14}.  
Corr\^ea and Muniz obtained, under various geometric assumptions,
polynomial bounds for finite groups of automorphisms preserving $\cF$
in terms of the Chern numbers of $X$ and $\cF$
\cite[Sections~4--6]{CM19}.
More recently, Spicer and Svaldi proved that, for every sufficiently
small $\epsilon>0$, the order of $\Bir(X,\cF)$ is bounded by
$C(\epsilon)\vol(K_{\cF}+\epsilon K_X)$ for canonical foliations of
general type on smooth projective surfaces
\cite[Theorem~5.3]{SS23}.  
The preceding bounds involve either auxiliary intersection numbers or
the volume of an adjoint divisor.  This leads to the following
question, suggested in \cite[p.~48]{SS23}.

\begin{question}
\label{ques:volume-bound}

Does there exist a function
$f\colon\bR_{>0}\to\bR_{>0}$ such that
\[
  \#\Bir(X,\cF)
  \leq
  f\bigl(\vol(K_{\cF})\bigr)
\]
for every canonical foliation $\cF$ of general type on a smooth
projective surface $X$?
\end{question}
Motivated by Question~\ref{ques:volume-bound}, we establish bounds
linear in $\vol(K_{\cF})$ for finite subgroups
$G\subseteq\Bir(X,\cF)$ whenever $\kappa(\cF/G)\geq0$, with
coefficients expressed explicitly in terms of the relevant
pluricanonical indices of $\cF/G$.  When $\kappa(\cF/G)=0$ or $1$,
these indices are uniformly bounded, yielding uniform linear bounds
depending only on $\vol(K_{\cF})$.

\subsection*{Pluricanonical indices}

One classical approach to lower bounds for canonical volumes is
through effective control of pluricanonical systems.  
For canonical models of foliations of general type with fixed Hilbert
function, results on the birationality of pluricanonical maps were
obtained in \cite[Theorem~2]{HL21} and
\cite[Theorem~1.3]{Chen21}, with an effective refinement in
\cite[Corollary~1.7(2)]{LWX26}.  

In general, however, there is no universal integer
$m$ such that $|mK_{\cF}|$ defines a birational map for every canonical
foliation of general type, even in fixed dimension and rank
\cite{Lu25}.  This suggests retaining the weaker numerical data
recording the first occurrence of a nonzero pluricanonical section or
a pluricanonical pencil.

Low plurigenera play a central role in Chen--Chen's work on threefolds of general type \cite{CC10,CC15}. In analogy with their ps-index, we introduce the following pluricanonical indices for foliations.

Let $\cH$ be a
canonical foliation on a smooth projective surface $X$ and define, for
$r\geq1$,
\[
  \delta_r(\cH)
  :=
  \min
  \bigl\{
    m\in\mathbb Z_{>0}
    \mid
    h^0(X,mK_{\cH})\geq r
  \bigr\},
\]
where $\min\varnothing:=\infty$.
(For an arbitrary foliation, we define $\delta_r$ to be the
corresponding index of any canonical birational model.)
We call $\delta_r(\cH)$ the \emph{$r$-th pluricanonical index} of $\cH$.
Thus $\delta_2(\cH)$ is the foliated analogue of the ps-index of Chen--Chen.
When $\kappa(\cH)=0$, the index $\delta_1(\cH)$ coincides with
Pereira's height \cite{Per05}.
For an earlier
use of the analogous quantity for a normal surface---the least
integer $m>0$ such that $h^0(mK_X)\geq2$---in estimating
$K_X^2$, see Blache \cite{Bla95}.

For canonical foliations, these indices are birational invariants and
may therefore be computed on any smooth birational model.  Moreover,
\[
  \delta_1(\cH)<\infty
  \quad\Longleftrightarrow\quad
  \kappa(\cH)\geq0,
\qquad\qquad
  \delta_2(\cH)<\infty
  \quad\Longleftrightarrow\quad
  \kappa(\cH)\geq1.
\]
These two indices are therefore naturally adapted to foliations
of nonnegative Kodaira dimension.

In Kodaira dimension two, the ps-index already gives effective
numerical control of the canonical volume. 
In \cite[Corollary~1.2 and (1.6)]{Xu26}, the author proved that every canonical foliation
$\cH$ of general type satisfies
\begin{equation}\label{equ:volume-ps-index}
  \vol(\cH)
  :=
  \vol(K_{\cH})
  \geq
  \frac{1}{
    \delta_2(\cH)^2
    \bigl(1+\delta_2(\cH)\bigr)
  }.
\end{equation}
This estimate suggests that, in the absence of uniform birational
boundedness, the ps-index is a natural next invariant to consider:
it records only the first appearance of a pluricanonical pencil,
but is already strong enough to control the canonical volume.

\subsection*{Main results}

Our first main result gives birational automorphism bounds whenever
the quotient foliation has nonnegative Kodaira dimension.

\begin{theorem}[Birational automorphism bounds from pluricanonical indices]
\label{thm:main}

Let $X$ be a smooth projective surface and let $\cF$ be a canonical
foliation of general type on $X$.  Let
$G\subseteq\operatorname{Bir}(X,\cF)$ be a finite subgroup and let
$\cG:=\cF/G$ be the quotient foliation in the birational sense.
If $\kappa(\cG)\geq0$, then
\[
  |G|
  \leq
  \begin{cases}
    4\delta_1(\cG)\,\vol(\cF),
      &\kappa(\cG)=0,\\[1mm]
    \displaystyle
    \frac{4}{3}\delta_2(\cG)\,\vol(\cF),
      &\kappa(\cG)=1,\\[3mm]
    \delta_2(\cG)^2
    \bigl(1+\delta_2(\cG)\bigr)\,
    \vol(\cF),
      &\kappa(\cG)=2.
  \end{cases}
\]
\end{theorem}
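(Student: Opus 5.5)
We will reduce the problem to a volume inequality on the quotient. First we regularize the group action. Since $G\subseteq\Bir(X,\cF)$ is finite, we pass to a $G$-equivariant smooth model $\tilde X$ on which $G$ acts biregularly; for example, we resolve the graph of the action and take a $G$-equivariant resolution. Volumes and pluricanonical indices of canonical foliations are birational invariants, so we may replace $(X,\cF)$ by this model and work with the induced foliation. Let $\pi\colon X\to Y=X/G$ be the quotient and let $\cG$ be the quotient foliation on $Y$.

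The foliated Riemann--Hurwitz formula then gives
\[
  K_{\cF}=\pi^*\bigl(K_{\cG}+\Delta\bigr),
  \qquad
  \Delta=\sum_i\Bigl(1-\frac1{r_i}\Bigr)C_i,
\]
where the sum runs only over the $\cG$-invariant branch curves $C_i$ with ramification indices $r_i$; non-invariant branch curves contribute no term in the foliated formula. This yields $\vol(\cF)=|G|\,\vol(K_{\cG}+\Delta)$. The pair $(\cG,\Delta)$ is tangency-free, because $\Delta$ is supported on invariant curves, and its coefficients lie in the standard set $\{1-1/r\}$. After a further blow-up we may assume it is foliated log canonical. Theorem~\ref{thm:main} is therefore equivalent to the lower bound
\[
  \vol(K_{\cG}+\Delta)\geq
  \begin{cases}
    \dfrac{1}{4\delta_1(\cG)}, & \kappa(\cG)=0,\\[2mm]
    \dfrac{3}{4\delta_2(\cG)}, & \kappa(\cG)=1,\\[2mm]
    \dfrac{1}{\delta_2(\cG)^2(1+\delta_2(\cG))}, & \kappa(\cG)=2.
  \end{cases}
\]

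For $\kappa(\cG)=2$ we use the inequality $\vol(K_{\cG}+\Delta)\geq\vol(K_{\cG})$, which holds because $\Delta\geq0$, and then apply \eqref{equ:volume-ps-index}. This case is immediate.

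For $\kappa(\cG)\in\{0,1\}$ we have $\vol(K_{\cG})=0$, so all the volume must come from $\Delta$. Here we use the cluster formula for adjoint volumes advertised in the abstract. Take the Zariski decomposition $K_{\cG}=P+N$ on a canonical (or nef) model. Then $P\equiv0$ when $\kappa=0$, and $P$ is numerically a multiple of the fibre class of an invariant fibration when $\kappa=1$. The negative part $N$ is supported on clusters, namely chains of $\cG$-invariant rational curves as in McQuillan's classification.

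We will compute $\vol(K_{\cG}+\Delta)=(P+N+\Delta)_{\mathrm{pos}}^2$ cluster by cluster. The strategy is to write the positive part as $P+\Delta'$, where $\Delta'$ is $\Delta$ corrected on each cluster it meets. Since $\cF$ is of general type, $K_{\cG}+\Delta$ is big, so $\Delta$ is nonzero and "escapes" $N$.

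The index enters as follows. Since $h^0(\delta_1 K_{\cG})\geq1$, or $h^0(\delta_2K_{\cG})\geq2$, there is an effective divisor $D\sim\delta K_{\cG}$ (a moving pencil in the $\kappa=1$ case). Bigness forces some component $C$ of $\Delta$ to satisfy $(K_{\cG}+\Delta)\cdot C>0$ or $P\cdot C>0$. Integrality of $D\cdot C$, combined with the coefficients $1-1/r\geq1/2$, should give intersection numbers bounded below by multiples of $1/\delta$. For $\kappa=1$, $C$ is a multisection of the pencil, so $\delta P\cdot C\geq1$ in suitable units, and the self-intersection of the $\Delta$-part on a cluster contributes the factor $3/4$. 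For $\kappa=0$, $P=0$ and $D\equiv\delta N$; one then extracts a factor $1/4$ from the cluster computation with the minimal coefficient $1/2$ and the $\mathbb Z/\delta$ torsion of $K_{\cG}$.

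The main obstacle is establishing the cluster formula and the sharp constants $1/4$ and $3/4$. This requires a precise local computation of how the half-integral (or, more generally, standard) invariant coefficients interact with the negative chains of $K_{\cG}$, including the possibility that $\Delta$ contains curves inside $\Supp N$. I would first treat the case where $\Delta$ is disjoint from $\Supp N$, and then handle each cluster type (chains ending at a reduced nondegenerate singularity) by explicit linear algebra on the intersection matrix.
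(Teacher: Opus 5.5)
\textbf{Error in the reduction: the foliated Riemann--Hurwitz formula is reversed.} You write $K_{\cF}=\pi^*(K_{\cG}+\Delta)$ with $\Delta$ on the invariant branch curves; it is the other way round.

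Over a non-invariant branch curve, take $\cG\colon\mathrm dy=0$, branch curve $\{x=0\}$, cover $x=u^e$, $y=v$. The generator $\partial_u$ of $T_{\cF}$ maps to $eu^{e-1}\partial_x$, so $K_{\cF}=\Pi^*K_{\cG}+(e-1)D$. Over an invariant branch curve $\{y=0\}$ with $y=v^e$, one has $\Pi_*\partial_u=\partial_x$ and there is no correction. Thus $\Delta=\sum(1-1/n_i)B_i$ runs over the \emph{non-invariant} branch curves, and invariant ramification is foliated crepant.

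Consequently, your claim that $(\cG,\Delta)$ is tangency-free ``because $\Delta$ is supported on invariant curves'' has no content. With the correct $\Delta$, tangency-freeness is exactly the nontrivial step. The paper does it as follows:
\begin{itemize}
\item resolve the base and blow up the tangency points of the horizontal branch divisor (the new exceptional curves are invariant, so the boundary is just the strict transform);
\item take the normalized base change;
\item show, by a local Kummer-cover analysis, that the new cover has only Hirzebruch--Jung singularities and a canonical foliation, with discrepancy $\sum_p M(\Theta_p)\geq0$ on the minimal resolution.
\end{itemize}
This is what keeps $\vol(\cF)=|G|\,\vol(K_{\bar\cG}+\bar\Delta)$, and hence bigness, on a smooth, reduced, tangency-free model where the adjoint bounds apply. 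Your ``further blow-up'' of $X/G$ with strict-transform boundary does not achieve this. At a singular point of $\cG$ on the boundary the exceptional coefficient $1-\ell_p-\mu_p=-\mu_p$ is negative, so bigness of the adjoint divisor is no longer guaranteed and the lower bound cannot be invoked.

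\textbf{The bounds for $\kappa(\cG)\in\{0,1\}$ are only gestured at.} Even granting the reduction, these bounds are the substance of the proof, and they are not established.

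For $\kappa=1$, your idea matches the paper's lemma, but it needs the actual argument. With $m=\delta_2$, the movable part $M_0$ of $|mK_{\cG_0}|$ satisfies $P\cdot M_0=0$, so Hodge index gives $M_0\equiv aP$. Maximality of the Zariski positive part gives $a\leq m$. Hence $P\cdot C\geq 1/m$ for any boundary component with $P\cdot C>0$, and such a component exists by bigness.

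The factor $3/4$ is not a local self-intersection on a chain. It is the coefficient $a_i(2-a_i)\geq3/4$ in the global formula $\vol(K_{\cF}+\Delta)=P^2+\sum_i a_i(2-a_i)P\cdot C_{0,i}+R(\cF_0,\Delta_0)+\sum_p T_1(p)$. Using this formula requires $R\geq0$, which comes from describing $N(\Delta)$ by $(\Delta,\cF)$-chains on irredundant triples, together with preservation of irredundancy under clusters. It also requires $T_1(p)\geq0$ for every cluster. Separately, $P$ need not be a multiple of the fibre of an invariant fibration: Riccati and turbulent foliations are counterexamples.

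For $\kappa=0$, you give no mechanism producing the factor $1/\delta_1$, and ``torsion of $K_{\cG}$'' is not a substitute. The paper takes the canonical cyclic cover of degree $\delta_1$. It is branched only along $\Supp N$, so it is foliated crepant and preserves tangency-freeness of the pulled-back boundary. This reduces to $K\sim0$ on a relatively minimal model. There $\sV(\cF_0,\Delta_0)=0$, singular roots contribute nothing, and each regular cluster root contributes $T_1(p)\geq 1/4$. Bigness forces at least one such root, giving $1/4$ upstairs and $1/(4\delta_1)$ after dividing by the degree.

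Your $\kappa=2$ case agrees with the paper.
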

Since $\operatorname{Bir}(X,\cF)$ is finite by
\cite[Theorem~1]{PS02}, one may take
$G=\operatorname{Bir}(X,\cF)$ in Theorem~\ref{thm:main}.

Theorem~\ref{thm:main} is obtained from the following adjoint volume
statement, which is the main technical result of the paper.

\begin{theorem}[Adjoint volume bounds from pluricanonical indices]
\label{thm:adjoint-volume-bounds}

Let $X$ be a smooth projective surface, let $\cF$ be a canonical
foliation on $X$, and let
\[
  \Delta=\sum_i a_iC_i,
  \qquad
  a_i\in\left[\frac12,1\right),
\]
where every $C_i$ is irreducible and non-$\cF$-invariant.  Assume that
\[
  \tang(\cF,\Delta_{\red})=0
\]
and that $K_{\cF}+\Delta$ is big.  If $\kappa(\cF)\geq0$, then
\[
  \vol(K_{\cF}+\Delta)
  \geq
  \begin{cases}
    \displaystyle
    \frac{1}{4\delta_1(\cF)}
    \geq
    \frac1{48},
      &\kappa(\cF)=0,\\[3mm]
    \displaystyle
    \frac{3}{4\delta_2(\cF)}
    \geq
    \frac1{56},
      &\kappa(\cF)=1,\\[3mm]
    \displaystyle
    \frac{1}{
      \delta_2(\cF)^2
      \bigl(1+\delta_2(\cF)\bigr)
    },
      &\kappa(\cF)=2.
  \end{cases}
\]

\end{theorem}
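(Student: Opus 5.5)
Since $\cF$ is canonical with $\kappa(\cF)\ge0$, we split according to $\kappa(\cF)$, using in each case the nonzero pluricanonical sections recorded by the indices. The case $\kappa(\cF)=2$ is immediate. The divisor $\Delta$ is effective and $\vol$ is monotone under adding effective divisors, so $\vol(K_\cF+\Delta)\ge\vol(K_\cF)$. Then \eqref{equ:volume-ps-index} gives the stated bound. The remaining work concerns $\kappa(\cF)=0,1$, where $K_\cF$ is not big and bigness of $K_\cF+\Delta$ comes from $\Delta$.

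The first step is to pass to a model where the Zariski decomposition of $K_\cF$ is controlled. Write $K_\cF=P+N$. For $\kappa(\cF)\le1$ and $\cF$ canonical, $P$ is nef with $P^2=0$; moreover $N$ is supported on $\cF$-chains, that is, $\cF$-invariant curves contracted by the foliated MMP. Let $m=\delta_1(\cF)$ when $\kappa=0$ and $m=\delta_2(\cF)$ when $\kappa=1$. Then $mK_\cF\sim D\ge0$. When $\kappa=1$ we instead obtain a pencil, so $mP$ dominates a moving class $F$ with $F^2=0$, where $F$ is a fibre (or a multiple of one) of the Iitaka fibration. Because each $C_i$ is non-invariant and $\tang(\cF,\Delta_{\red})=0$, each $C_i$ is everywhere transverse to $\cF$ away from singular points. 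This gives intersection control of the form $K_\cF\cdot C_i=-C_i^2+(\text{singular contributions})$, that is, the foliated adjunction $K_\cF\cdot C+C^2=\tang(\cF,C)+\text{(index terms)}\ge0$. The index terms are computed by a local cluster formula at the points of $C_i\cap\Sing(\cF)$ and at the points of $C_i\cap N$.

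The key step is the cluster formula for $\vol(K_\cF+\Delta)$. Write the Zariski decomposition $K_\cF+\Delta=P_\Delta+N_\Delta$. Then decompose $P_\Delta^2=P_\Delta\cdot K_\cF+P_\Delta\cdot\Delta$ into contributions from the connected clusters consisting of a curve $C_i$ together with the $\cF$-chains of $N$ and the negative curves it meets. Each cluster contributes a local term bounded below by a quantity depending only on the coefficients $a_i\ge\tfrac12$ and on the chain data. Since $P_\Delta\cdot mK_\cF=P_\Delta\cdot D\ge0$, we obtain $P_\Delta^2\ge P_\Delta\cdot\Delta\ge\tfrac1m\,(\text{something})$.

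For $\kappa=0$, the torsion-type behaviour $mP\equiv0$ implies that some $C_i$ with coefficient $a\ge\tfrac12$ has positive intersection with the positive part. The cluster estimate then yields $\vol\ge\frac{1}{4m}$. Here the $4$ arises from the factor $a\ge\tfrac12$ appearing twice, in $a^2$, combined with the $\tfrac1m$ coming from $m K_\cF\sim D$.

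For $\kappa=1$, we restrict to a general member $F$ of the pencil. Since $\Delta$ is not vertical (otherwise $K_\cF+\Delta$ would not be big), $\Delta\cdot F>0$. The relevant estimate is $\vol\ge (K_\cF+\Delta)|_F$-type bound $\times\frac{1}{m}$. The constant $\tfrac34$ should come from optimising $2a-a^2$ over $a\ge\tfrac12$, which gives $\tfrac34$ at $a=\tfrac12$.

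The numerical bounds $\frac1{48}$ and $\frac1{56}$ then follow from the effective bounds $\delta_1\le12$ (Pereira's height) and $\delta_2\le42$, which are quoted from the classification of canonical foliations with $\kappa=0,1$ (Brunella/McQuillan). I expect the main obstacle to be the cluster formula itself. Specifically, we must show that the negative part $N_\Delta$ cannot absorb too much of $\Delta$ near $\cF$-chains and singular points, so that each $C_i$ retains a definite positive contribution. My first attempt will be to compute $N_\Delta$ explicitly on each cluster by solving the linear system given by the intersection matrix of the chain together with $C_i$. The hypothesis $\tang=0$ should force $C_i$ to meet each chain only at its first curve, which makes this system explicit.
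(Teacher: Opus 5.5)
Your treatment of $\kappa(\cF)=2$ and of the final constants matches the paper, but both substantive cases have genuine gaps.

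\textbf{The inequality $P_\Delta^2\ge P_\Delta\cdot\Delta$ is too lossy.} It cannot give bounds uniform in the coefficients. Suppose $K_\cF=P$ is nef with $P^2=0$ and $C$ is a tangency-free component, so $C^2=-P\cdot C$. Then $P+aC$ is nef and
\[
  P_\Delta^2=a(2-a)\,P\cdot C,
  \qquad
  P_\Delta\cdot\Delta=a(1-a)\,P\cdot C\longrightarrow0\quad(a\to1).
\]
The discarded term $P_\Delta\cdot K_\cF$ carries most of the volume. The coefficient $a(2-a)$ appears only if you keep the full expansion, using $\Delta^2=-\sum a_i^2K_\cF\cdot C_i$ on the tangency-free model.

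\textbf{The case $\kappa(\cF)=1$.} What is needed is
\[
  \vol(K_\cF+\Delta)\ge\sum_i a_i(2-a_i)\,P\cdot C_i .
\]
Your pencil idea, once made precise, then gives $P\cdot C_i\ge1/m$ as in the paper: Hodge index gives $F\equiv aP$, maximality of the positive part gives $a\le m$, and $F\cdot C_i\in\mathbb Z_{>0}$. In the paper the displayed inequality comes from the cluster formula on a cluster-adapted resolution of a relatively minimal model, together with two facts:
\begin{itemize}
\item $T_1(p)\ge0$ for every cluster root $p$;
\item $\sV(\cF_0,\Delta_0)=P^2+\sum a_j(2-a_j)P\cdot C_{0,j}+R(\cF_0,\Delta_0)$ with $R\ge0$.
\end{itemize}
Proving $R\ge0$ is exactly your unproved step that ``$N_\Delta$ cannot absorb too much of $\Delta$''. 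It requires the structure theorem $N(\Delta)=\sum M(\Delta,\Theta_i)$ over maximal $(\Delta,\cF)$-chains on irredundant triples. Irredundancy must also be preserved along clusters, which uses pseudo-effectivity of $K_\cF$. These give $N(\Delta)\le N$ and $N(\Delta)\cdot\Delta\le\sum a_j(1-a_j)N\cdot C_j$. Note also that $a_i\ge\frac12$ together with $\Delta\cdot\Gamma_1<1$, not $\tang=0$, is what forces the boundary to meet such a chain once, transversally, at $\Gamma_1$.

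\textbf{The case $\kappa(\cF)=0$.} Here your mechanism fails outright. Since $P\equiv0$, no $C_i$ meets the positive part of $K_\cF$ positively. Also, $mK_\cF\sim D\ge0$ only yields $P_\Delta\cdot K_\cF\ge0$, a sign condition that can never produce a factor $1/m$.

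The bigness of $K_\cF+\Delta$ comes from tangencies of $\Delta_0$ with $\cF_0$ on a relatively minimal model, resolved on the way to the tangency-free $X$. So you must compare $X$ with $(X_0,\cF_0,\Delta_0)$. When $K_{\cF_0}\equiv0$, the paper argues as follows:
\begin{itemize}
\item $\sV(\cF_0,\Delta_0)=0$;
\item singular cluster roots contribute nothing, so bigness forces a regular root;
\item a regular root contributes $T_1(p)=\frac{t_p}{t_p+1}-(1-a)^2\ge\frac12-\frac14$, or at least $\frac12$ if $m_p\ge2$.
\end{itemize}
So the $\frac14$ is $\frac12-\frac14$, not ``$a^2$ with $a\ge\frac12$''.

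The factor $1/\delta_1(\cF)$ comes from the canonical cyclic cover of degree $h=\delta_1(\cF)$ defined by $0\ne s\in H^0(hK_{\cF_0})$. This cover is branched only along the $\cF_0$-invariant curve $\Supp N$. By foliated Riemann--Hurwitz, $K_\cF+\Delta$ pulls back to the adjoint divisor upstairs, tangency-freeness is preserved, and the volume is multiplied by $h$. Upstairs $\delta_1=1$, so $K\sim0$ on a relatively minimal model and the numerically trivial case applies. Nothing in your sketch plays the role of this covering step.
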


The cases $\kappa(\cF)=0$ and $\kappa(\cF)=1$ constitute the new
content of the adjoint volume theorem.  In these cases $K_{\cF}$ is
not big, so the bigness of $K_{\cF}+\Delta$ comes from the boundary,
and the lower bound is governed by $\delta_1(\cF)$ or
$\delta_2(\cF)$, respectively.  When $\kappa(\cF)=2$, monotonicity of
volume gives
\[
  \vol(K_{\cF}+\Delta)
  \geq
  \vol(K_{\cF}),
\]
and the asserted estimate follows from \eqref{equ:volume-ps-index}.  Thus the
tangency and coefficient assumptions are unnecessary in this case.
The uniform lower bounds $1/48$ and $1/56$ follow, respectively, from
Pereira's bound $\delta_1(\cF)\leq12$ when $\kappa(\cF)=0$
\cite[Theorem~1]{Per05} and the bound $\delta_2(\cF)\leq42$ when
$\kappa(\cF)=1$ \cite[Theorem~4.14]{CLNP22}.

Combining Theorem~\ref{thm:main} with these bounds gives the following
uniform estimates.

\begin{corollary}
\label{cor:uniform-automorphism-bounds}

Under the assumptions of Theorem~\ref{thm:main}, one has
\[
  |G|
  \leq
  \begin{cases}
    48\,\vol(\cF),
      &\kappa(\cG)=0,\\[1mm]
    56\,\vol(\cF),
      &\kappa(\cG)=1.
  \end{cases}
\]
Moreover,
\[
  \vol(\cF)
  \geq
  \begin{cases}
    \displaystyle\frac1{24},
      &\kappa(\cG)=0,\\[2mm]
    \displaystyle\frac1{28},
      &\kappa(\cG)=1.
  \end{cases}
\]

\end{corollary}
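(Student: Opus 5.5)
The corollary follows formally from Theorem~\ref{thm:main} combined with the effective bounds on the pluricanonical indices quoted after Theorem~\ref{thm:adjoint-volume-bounds}. No new geometric input is needed.

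\textbf{Step 1: the bounds on $|G|$.} Take the assumptions of Theorem~\ref{thm:main}.
\begin{itemize}
\item If $\kappa(\cG)=0$, Theorem~\ref{thm:main} gives $|G|\leq 4\delta_1(\cG)\vol(\cF)$. Pereira's bound $\delta_1(\cG)\leq 12$ \cite[Theorem~1]{Per05} then yields $|G|\leq 48\vol(\cF)$.
\item If $\kappa(\cG)=1$, Theorem~\ref{thm:main} gives $|G|\leq \tfrac43\delta_2(\cG)\vol(\cF)$. The bound $\delta_2(\cG)\leq 42$ \cite[Theorem~4.14]{CLNP22} then yields $|G|\leq 56\vol(\cF)$.
\end{itemize}
Both $\delta_1$ and $\delta_2$ are birational invariants computed on a canonical model of $\cG$. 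So I must check that the cited bounds apply to $\cG$ as a foliation on some smooth projective surface with canonical singularities. This holds because the quotient is taken in the birational sense, and the indices are defined via any canonical birational model.

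\textbf{Step 2: the lower bounds on $\vol(\cF)$.} I would apply Step~1 to a nontrivial subgroup, so that $|G|\geq 2$ gives $\vol(\cF)\geq 2/48=1/24$ and $\vol(\cF)\geq 2/56=1/28$. The difficulty is that the statement as written is under the assumptions of Theorem~\ref{thm:main}, which allow $G$ to be trivial. In that case $\cG=\cF$ is of general type, so $\kappa(\cG)=2$ and neither case $\kappa(\cG)=0$ nor $\kappa(\cG)=1$ applies.

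Thus whenever $\kappa(\cG)\in\{0,1\}$ we have $\kappa(\cG)\neq\kappa(\cF)=2$. This forces $G\neq\{1\}$, hence $|G|\geq 2$. Combining $|G|\geq2$ with the two bounds of Step~1 gives the claimed inequalities.

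\textbf{Main obstacle.} The only point needing care is this observation that $\kappa(\cG)<2$ forces $|G|\geq 2$. If $G$ were trivial, $\cG$ would be birational to $\cF$ and hence of general type, contradicting $\kappa(\cG)\in\{0,1\}$. The remainder is substitution.
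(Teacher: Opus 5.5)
Your proposal is correct and follows the paper's proof. Both arguments combine the first two cases of Theorem~\ref{thm:main} with $\delta_1(\cG)\leq 12$ and $\delta_2(\cG)\leq 42$, and both get $|G|\geq 2$ from the observation that $G=\{1\}$ would force $\cG=\cF$ to have Kodaira dimension two. The only difference is cosmetic: the paper writes the last step as $\vol(\cF)=|G|\,\vol(K_{\bar\cG}+\bar\Delta)\geq 2\cdot\frac{1}{48}$ (resp.\ $2\cdot\frac{1}{56}$) via the covering formula, which is the same arithmetic as your $\vol(\cF)\geq |G|/48\geq 2/48$.
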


For the second assertion, note that if
$\kappa(\cG)\in\{0,1\}$, then $G$ is nontrivial; otherwise
$\cG=\cF$ would have Kodaira dimension two.  Hence $|G|\geq2$, and
the asserted volume bounds follow from the first assertion.

The volume estimates in
Corollary~\ref{cor:uniform-automorphism-bounds} fit into the broader
problem of understanding the possible volumes of foliations of
general type.  It is natural to ask whether, in fixed dimension and
rank, the set of volumes of canonical foliations of general type
satisfies the DCC and, in particular, admits a uniform positive lower
bound; see \cite[\S3]{Cas21} and \cite[Question~4]{HL21}.  The general
question remains open even for rank-one foliations on surfaces.

Using Noether-type inequalities, L{\"u} and Tan proved the sharp bound
$\vol(\cF)\geq\frac12$ under the assumption
$h^0(K_{\cF})\geq2$ \cite[Corollary~1.11]{LT24}.  
This is precisely
the case $\delta_2(\cF)=1$ of \eqref{equ:volume-ps-index}.
Han--Jiao--Li--Liu proved that the volume of a canonical
algebraically integrable foliation belongs to a discrete set depending
only on its rank and the volume of a general leaf
\cite[Theorem~1.2]{HJLL26}. 
For algebraically integrable foliations, the volume agrees with a
modular invariant of the inducing fibration \cite{LT24}; Liu and Tan
obtained genus-dependent lower bounds for this invariant, sharp in
genus two \cite{LT22}.
Related DCC and volume lower-bound
results under additional assumptions appear in
\cite{CPT25,Fan25,ST26,CLSS26,CZ26}.

\subsection*{Outline of the proof}

The case $\kappa(\cF)=2$ of
Theorem~\ref{thm:adjoint-volume-bounds} follows directly from
the monotonicity of volume and \eqref{equ:volume-ps-index}.  We
therefore focus on $\kappa(\cF)\in\{0,1\}$.

After passing to a tangency-free cluster-adapted model over a
relatively minimal pair $(X_0,\cF_0,\Delta_0)$, write
\[
  K_{\cF_0}=P+N,
  \qquad
  \Delta_0=\sum_i a_iC_i.
\]
The cluster formula takes the form
\[
  \vol(K_{\cF}+\Delta)
  =
  P^2+\sum_i a_i(2-a_i)P\cdot C_i
  +R(\cF_0,\Delta_0)
  +\sum_{p\in\sS_\rho}T_1(p),
\]
where $R(\cF_0,\Delta_0)\geq0$ and all local contributions
$T_1(p)$ are nonnegative; moreover, a regular cluster root contributes
at least $1/4$.

When $\kappa(\cF)=0$, the canonical cyclic cover of degree
$\delta_1(\cF)$ reduces the argument to the numerically trivial case,
giving the lower bound $1/(4\delta_1(\cF))$.
When $\kappa(\cF)=1$, the bigness of $K_{\cF}+\Delta$ and the first pluricanonical pencil yield
a component $C_i$ of $\Delta_0$ such that
\[
  P\cdot C_i\geq\frac1{\delta_2(\cF_0)}.
\]
Together with $a_i(2-a_i)\geq3/4$, this yields the required lower
bound.

Finally, applying these estimates to the tangency-free quotient pair
$(\bar Y,\bar\cG,\bar\Delta)$ and using
\[
  \vol(\cF)
  =
  |G|\,\vol(K_{\bar\cG}+\bar\Delta)
\]
gives Theorem~\ref{thm:main}.

\section{Foliations, tangency, and Zariski decompositions}
\label{sec:preliminaries}

Throughout this section, $X$ is a smooth projective surface over
$\mathbb C$ and $\cF$ is a rank-one foliation on $X$.  Unless otherwise
stated, a boundary divisor will be written as
\[
  \Delta=\sum_{i=1}^{l}a_iC_i,
  \qquad
  a_i\in\left[\frac12,1\right),
\]
where every $C_i$ is irreducible and non-$\cF$-invariant.  We use
$\Delta_{\red}:=\sum_iC_i$ and denote strict transforms by the same
letter with a subscript or a bar.

\subsection{Foliations and reduced singularities}
\label{subsec:foliations-singularities}

A foliation $\cF$ on $X$ is a saturated rank-one subsheaf
$T_{\cF}\subset T_X$. Since $T_{\cF}$ is reflexive, it
determines a Weil divisor \(K_{\cF}\) by
\[
  \cO_X(-K_{\cF})\simeq T_{\cF}.
\]
The singular locus $\Sing(\cF)$ is the finite set where
$T_X/T_{\cF}$ is not locally free.

Locally at a singular point $p$ of $\cF$, the foliation is generated by a
holomorphic vector field
\[
  v=A(x,y)\frac{\partial}{\partial x}
    +B(x,y)\frac{\partial}{\partial y},
\]
where $A$ and $B$ have no common divisorial factor.  Let
$\lambda_1,\lambda_2$ be the eigenvalues of the linear part of $v$ at
$p$.  The singularity is \emph{reduced} if, after possibly interchanging
the eigenvalues, $\lambda_2\neq0$ and
\[
  \frac{\lambda_1}{\lambda_2}\notin\mathbb Q_{>0}.
\]
It is \emph{non-degenerate} if both eigenvalues are nonzero, and is a
\emph{saddle-node} otherwise.  The foliation is reduced if all its
singularities are reduced.  By Seidenberg's theorem, every foliation on
a smooth surface becomes reduced after a sequence of blow-ups
\cite{Sei68}.

Let $\pi\colon(Y,\cG)\to(X,\cF)$ be a proper birational morphism, where
$\cG$ denotes the saturated transform of $\cF$.  Whenever $K_{\cF}$ is
$\mathbb Q$-Cartier, write
\[
  K_{\cG}
  =\pi^*K_{\cF}
   +\sum_{E\subset\Exc(\pi)}a(E,\cF)E.
\]
The foliation $\cF$ is \emph{canonical} if $a(E,\cF)\geq0$ for every prime
divisor $E$ over $X$.

An irreducible curve $C\subset X$ is \emph{$\cF$-invariant} if the foliation is
tangent to $C$ at its general point.  Suppose that $C$ is invariant and
let $p\in C$.  If $f=0$ is a reduced local equation of $C$ and $\omega$
is a local $1$-form defining $\cF$, write
\[
  g\omega=h\,df+f\eta,
\]
where $f$ and $h$ are coprime.  The local $\mathrm Z$-index is
\[
  \mathrm Z(\cF,C,p)
  :=\operatorname{ord}_p
    \left(\left.\frac{h}{g}\right|_C\right),
\]
and
\[
  \mathrm Z(\cF,C)
  :=\sum_{p\in C}\mathrm Z(\cF,C,p).
\]
For an irreducible $\cF$-invariant curve, the index theorem (cf.~\cite[Proposition~2.3]{Bru15}) gives
\begin{equation}
  K_{\cF}\cdot C
  =2p_a(C)-2+\mathrm Z(\cF,C).
  \label{eq:invariant-index-formula}
\end{equation}

At a non-degenerate reduced
singularity, each invariant local separatrix has local
$\mathrm Z$-index one.  At a saddle-node, the strong separatrix has
local $\mathrm Z$-index one, whereas the weak separatrix, when it
exists, has local $\mathrm Z$-index at least two; see
\cite[pp.~30--31]{Bru15}.

We shall also use the following form of the separatrix theorem.

\begin{theorem}[Separatrix theorem]
\label{thm:separatrix}
Let $D$ be a connected compact $\cF$-invariant curve.  Assume that the
singularities of $\cF$ along $D$ are reduced, that $D$ has normal
crossings, and that its intersection matrix is negative definite with
tree dual graph.  Then there is a point $p\in D\cap\Sing(\cF)$ admitting
a separatrix not contained in $D$.
\end{theorem}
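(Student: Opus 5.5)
The separatrix theorem (Camacho--Sad) is classical, and in the paper it is presumably quoted from Brunella's book. My plan is to prove it by contradiction using the Camacho--Sad index formula. The index formula is the residue counterpart of the $\mathrm Z$-index formula \eqref{eq:invariant-index-formula}. For an irreducible $\cF$-invariant curve $C$ it reads
\[
  C^2=\sum_{p\in C\cap\Sing(\cF)}\mathrm{CS}(\cF,C,p),
\]
and I will use it in the version allowing $C$ to be nodal or to meet other invariant curves. So assume that at every point $p\in D\cap\Sing(\cF)$ all local separatrices are contained in $D$. Since the singularities are reduced, each such $p$ has exactly two transverse branches of invariant curves (a saddle-node has a formal weak separatrix, which I treat the same way). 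If both branches lie in $D$, then $p$ is a node of $D$. The remaining case, a point with only one branch in $D$, is excluded by the assumption. So every singular point of $\cF$ on $D$ is a node of $D$, and every node of $D$ is a reduced singularity.

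Next, at a node $p$ where components $C_i,C_j$ meet, the linear part of the vector field is diagonal with eigenvalues $\lambda_1,\lambda_2$. A direct computation gives $\mathrm{CS}(\cF,C_i,p)=\lambda_2/\lambda_1$ and $\mathrm{CS}(\cF,C_j,p)=\lambda_1/\lambda_2$, and in the saddle-node case one of these is $0$ with the convention of Brunella. In the non-degenerate case their product is $1$. Summing over the points of each component gives $C_i^2=\sum_{p}\mathrm{CS}(\cF,C_i,p)$, with the sum over the nodes of $D$ lying on $C_i$. For the tree hypothesis I use that the dual graph has no cycles, so no component has a self-node and two components meet in at most one point.

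The key step is a linear-algebra argument showing that these relations contradict negative definiteness. I first treat the case with no saddle-nodes. Root the tree and choose nonzero complex numbers $t_i$ with $t_j/t_i=\lambda_1/\lambda_2$ along each edge; the tree structure lets this be solved consistently. The index relations then say that the vector $(t_i)$ satisfies $\sum_j (C_i\cdot C_j)\,t_j\cdot(\text{sign data})=0$, so it is a null vector of the intersection form twisted by edge weights. More cleanly, I follow Camacho--Sad's original trick. Choose the edge weights so that the quadratic form $\sum_i C_i^2x_i^2+2\sum_{\text{edges}}x_ix_j$ is degenerate; the index identity exhibits the kernel vector $x_i=t_i$. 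Since the form is real-rescalable along a tree, it is equivalent to the intersection form. This contradicts negative definiteness. Handling complex or negative ratios $\lambda_1/\lambda_2$ is where I expect the main obstacle. The approach I would try first is to induct on the number of components. Contract nothing; instead pick a leaf $C_\ell$ attached at a node $p$. Because $C_\ell$ has only the one singular point, $C_\ell^2=\mathrm{CS}(\cF,C_\ell,p)$. This forces the ratio at $p$ to be the negative real number $C_\ell^2$, hence real. The neighbor's index contribution at $p$ is then $1/C_\ell^2$. Removing $C_\ell$ and replacing the neighbor's self-intersection $C_j^2$ by $C_j^2-1/C_\ell^2$ is exactly the Schur complement of the intersection matrix, which stays negative definite. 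Induction then reduces to a single component with no singular points, where $C^2=0$ contradicts $C^2<0$. A saddle-node leaf would give $C_\ell^2=0$ on the weak side, which is impossible, or would force the strong separatrix to exit $D$. Either way such points are excluded in the induction as well.
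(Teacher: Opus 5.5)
Your overall route, Camacho--Sad indices plus stripping terminal components of the dual tree, is the classical Camacho--Sad argument. The paper gives no proof of its own and simply quotes Brunella's Theorem~3.4. In the non-degenerate case your induction is sound once you state it for a tree with real weights $w_i$ (initially $w_i=C_i^2$). The invariant is that $w_i=\sum\mathrm{CS}(\cF,C_i,q)$ over the corners still present, and that the weighted matrix is negative definite. A leaf then has $w_\ell=\mathrm{CS}(\cF,C_\ell,p)<0$, the neighbour receives index $1/w_\ell$ at $p$, and the Schur complement preserves both properties. The preceding ``null vector'' paragraph is not an argument and should be dropped.

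The genuine gap is the saddle-node case, where you have the indices reversed. Take the normal form $x^{k+1}\partial_x+y(1+\nu x^k)\partial_y$. The \emph{strong} separatrix $\{x=0\}$ has Camacho--Sad index $0$, and the weak one $\{y=0\}$ has index $\nu$.

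\begin{itemize}
\item \emph{Corner case.} A leaf $C_\ell$ that is the strong separatrix at its corner gives $w_\ell=0$, which is the contradiction you want. A leaf that is the \emph{weak} separatrix, however, is fully compatible with your contradiction hypothesis. The strong separatrix is then the neighbour $C_j\subset D$, nothing leaves $D$, and your claim that ``either way such points are excluded'' is false. Here $\mathrm{CS}(\cF,C_j,p)=0$ rather than $1/w_\ell$, so the Schur update $w_j\mapsto w_j-1/w_\ell$ breaks the invariant. The correct step is to delete $C_\ell$ and leave $w_j$ unchanged. Negative definiteness survives because you pass to a principal submatrix.
\item \emph{Smooth-point case.} The same confusion affects your opening reduction. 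Suppose $p$ is a smooth point of $D$ where $D$ is the strong separatrix of a saddle-node whose weak separatrix diverges. Then there is no analytic separatrix outside $D$, so the hypothesis does not exclude $p$. Counting the formal weak separatrix as a separatrix would only prove a formal version of the statement. The repair is that $\mathrm{CS}(\cF,D,p)=0$ at such points, so they drop out of every index sum.
\end{itemize}

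With these two repairs, the induction ends at a single component with weight $w<0$ whose remaining index contributions are all zero. That is the desired contradiction.
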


\begin{proof}
See \cite[Theorem~3.4]{Bru15}.
\end{proof}

A smooth rational $\cF$-invariant curve $E$ with $E^2=-1$ is called
\emph{$\cF$-exceptional} if, after contracting $E$, the induced foliation is
either regular or has a reduced singularity at the image point.
A foliated surface $(X,\cF)$ is \emph{relatively minimal} if $\cF$ is
reduced and $X$ contains no $\cF$-exceptional curve.  Every foliated
surface admits a relatively minimal model, although it need not be
unique.

A birational self-map $\varphi\colon X\dashrightarrow X$ is said to
\emph{preserve} $\cF$ if the saturated transform $\varphi^*\cF$ coincides
with $\cF$.  We set
\[
  \Aut(X,\cF)
  :=
  \{\varphi\in\Aut(X)\mid \varphi^*\cF=\cF\},
  \qquad
  \Bir(X,\cF)
  :=
  \{\varphi\in\Bir(X)\mid \varphi^*\cF=\cF\}.
\]
When the ambient surface is understood, we simply write
$\Aut(\cF)$ and $\Bir(\cF)$.  If $\cF$ is of general type, then
$\Bir(X,\cF)$ is finite by \cite[Theorem~1]{PS02}.

\subsection{Tangency and blow-ups}
\label{subsec:tangency-blowups}

Let $S\subset X$ be a reduced curve without $\cF$-invariant components.
Around $p\in S$, choose a reduced local equation $f=0$ for $S$ and a
local vector field $v$ generating $\cF$.  Define
\[
  \tang(\cF,S,p)
  :=\dim_{\mathbb C}
    \frac{\cO_{X,p}}{\langle f,v(f)\rangle},
  \qquad
  \tang(\cF,S)
  :=\sum_{p\in S}\tang(\cF,S,p).
\]
The definition is made for the entire reduced curve; in particular, it
also detects singularities of $S$ and intersections between distinct
components.  The tangency formula (cf.~\cite[Proposition~2.2]{Bru15}) is
\begin{equation}
  \tang(\cF,S)=K_{\cF}\cdot S+S^2.
  \label{eq:tangency-formula}
\end{equation}
For an irreducible curve, this is
$\tang(\cF,C)=K_{\cF}\cdot C+C^2$; see
\cite[Proposition~2.2]{Bru15}.

We say that $(\cF,S)$ is \emph{tangency-free} if
$\tang(\cF,S)=0$.  In this case, the irreducible components of $S$ are
smooth and pairwise disjoint, are everywhere transverse to $\cF$, and
are disjoint from $\Sing(\cF)$.

We record the two blow-up formulas used below.  Let
\[
  \sigma\colon(X_1,\cF_1,\Delta_1)
  \longrightarrow(X,\cF,\Delta)
\]
be the blow-up of a regular or reduced singular point $p$, with
exceptional curve $E$, and let $\Delta_1:=\sigma_*^{-1}\Delta$.  Put
\[
  \ell_p:=
  \begin{cases}
    0,&p\notin\Sing(\cF),\\
    1,&p\in\Sing(\cF),
  \end{cases}
  \qquad
  \mu_p:=\mult_p(\Delta)
  =\sum_i a_i\mult_p(C_i).
\]
Then
\begin{equation}
  K_{\cF_1}+\Delta_1
  =\sigma^*(K_{\cF}+\Delta)
   +(1-\ell_p-\mu_p)E.
  \label{eq:adjoint-blowup-formula}
\end{equation}
If $m_p:=\mult_p(\Delta_{\red})$, then
\begin{equation}
  \tang(\cF,\Delta_{\red})
  -\tang(\cF_1,\Delta_{1,\red})
  =m_p(m_p-1+\ell_p).
  \label{eq:tangency-drop}
\end{equation}
Indeed, both identities follow immediately from
$K_{\cF_1}=\sigma^*K_{\cF}+(1-\ell_p)E$ and
$\Delta_{1,\red}=\sigma^*\Delta_{\red}-m_pE$.

\subsection{Zariski decompositions}
\label{subsec:zariski-foliated-chains}

For an $\mathbb R$-divisor $D$ on $X$, set
\[
  \vol(D)
  :=\limsup_{m\to\infty}
    \frac{h^0\bigl(X,\cO_X(\lfloor mD\rfloor)\bigr)}{m^2/2}.
\]
The divisor $D$ is big if and only if $\vol(D)>0$.

Let now $D$ be pseudo-effective.  Its Zariski
decomposition is the unique expression
\[
  D=P(D)+N(D),
\]
where $P(D)$ is nef, $N(D)\geq0$ has negative-definite intersection
matrix when it is nonzero, and
$P(D)\cdot\Gamma=0$ for every component $\Gamma$ of $N(D)$.  Moreover,
\[
  \vol(D)=P(D)^2.
\]
For the two divisors used throughout the paper, we write
\[
  K_{\cF}=P+N,
  \qquad
  K_{\cF}+\Delta=P(\Delta)+N(\Delta).
\]
Thus $P,N$ always refer to the Zariski decomposition of $K_{\cF}$,
whereas $P(\Delta),N(\Delta)$ refer to that of the adjoint divisor.

The Kodaira dimension of $\cF$ is
$\kappa(\cF):=\kappa(K_{\cF})$ when $\cF$ is reduced (or canonical).  When $K_{\cF}$ is pseudo-effective,
its numerical dimension is determined by $P$:
\[
  \nu(\cF)=
  \begin{cases}
    0,&P\equiv0,\\
    1,&P\not\equiv0\text{ and }P^2=0,\\
    2,&P^2>0.
  \end{cases}
\]

We conclude this section with a birational invariance observation.
Although the adjoint divisor itself need not be preserved under an
arbitrary birational modification when the boundary is transformed
strictly, its volume is unchanged between tangency-free models.

\begin{lemma}[Birational invariance of adjoint volume for tangency-free models]
\label{lem:tangency-free-volume}
Let $(X_i,\cF_i,\Delta_i)$, $i=1,2$, be two smooth foliated
triples related by a birational map. Assume that the induced
birational map identifies their foliations and their boundaries by
strict transform, that the foliations $\cF_i$ are reduced, and that
every component of $\Delta_i$ is non-$\cF_i$-invariant with
coefficient in $[0,1]$. If
\[
  \tang(\cF_i,\Delta_{i,\red})=0
  \qquad (i=1,2),
\]
then
\[
  \vol(K_{\cF_1}+\Delta_1)
  =
  \vol(K_{\cF_2}+\Delta_2).
\]
\end{lemma}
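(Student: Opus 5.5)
Resolve the birational map by a common smooth model dominating both triples, and check that the adjoint volume does not change in passing from each $(X_i,\cF_i,\Delta_i)$ to that model. By the weak factorization for surfaces (every birational map between smooth projective surfaces factors through blow-ups of points), there is a smooth surface $Z$ with birational morphisms $\pi_i\colon Z\to X_i$, each a composition of point blow-ups. On $Z$, let $\cF_Z$ be the transformed foliation (the same for both sides, since the map identifies the foliations) and $\Delta_Z$ the strict transform of the boundary (again the same, since the boundaries correspond by strict transform). It therefore suffices to prove the following: if $(X,\cF,\Delta)$ is reduced and tangency-free with coefficients in $[0,1]$, and $\sigma\colon X'\to X$ is a composition of point blow-ups, then
\[
  \vol(K_{\cF'}+\Delta')=\vol(K_\cF+\Delta),
\]
where $\Delta'$ is the strict transform of $\Delta$.

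Induct on the number of blow-ups, and at each step use \eqref{eq:adjoint-blowup-formula}:
\[
  K_{\cF_1}+\Delta_1=\sigma^*(K_\cF+\Delta)+(1-\ell_p-\mu_p)E.
\]
Tangency-freeness says the components of $\Delta_{\red}$ are smooth, pairwise disjoint, transverse to $\cF$ and disjoint from $\Sing(\cF)$. Hence at any point $p$, either $\mu_p=0$ or ($\ell_p=0$ and $p$ lies on exactly one smooth component, so $\mu_p=a_i\le1$). In every case $1-\ell_p-\mu_p\ge0$. Adding an effective exceptional divisor does not change the volume: $h^0(\lfloor m\sigma^*D\rfloor+kE)=h^0(\lfloor mD\rfloor)$ for $k\ge0$, using $\sigma_*\cO(\lfloor m\sigma^*D\rfloor+kE)=\cO(\lfloor mD\rfloor)$. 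Floors need a little care, because $\lfloor m\sigma^*D\rfloor$ may differ from $\sigma^*\lfloor mD\rfloor$ by exceptional parts. The cleaner route is via Zariski decomposition: $P(\sigma^*D+cE)=\sigma^*P(D)$ for $c\ge0$, so the volume equals $P(D)^2$.

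The point I expect to be the main obstacle is keeping the hypotheses alive through the induction. After one blow-up the new triple must again be reduced and tangency-free with coefficients in $[0,1]$. Reducedness persists because blowing up a regular or reduced point yields reduced singularities. Tangency-freeness persists by \eqref{eq:tangency-drop}: $\tang$ can only drop, and it is already $0$. The exceptional curve $E$ is not part of the boundary, since we take strict transforms, so no new boundary components appear. There is one subtlety. If $Z$ requires blowing up points that, in the intermediate model, lie on $E$ or on its intersection with a strict boundary component, the local analysis above still applies, because the intermediate triple remains tangency-free. So the induction closes.

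Applying this to $\pi_1$ and $\pi_2$ gives
\[
  \vol(K_{\cF_1}+\Delta_1)=\vol(K_{\cF_Z}+\Delta_Z)=\vol(K_{\cF_2}+\Delta_2).
\]
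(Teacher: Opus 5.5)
Your proposal is correct and follows essentially the same route as the paper. Both arguments pass to a common resolution factored into point blow-ups. Both use the adjoint blow-up formula, where tangency-freeness forces $1-\ell_p-\mu_p\geq0$, and the tangency-drop formula, which keeps every intermediate triple tangency-free. Both then conclude because adding an effective exceptional divisor leaves the volume unchanged.
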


\begin{proof}
Take a common resolution
\[
  \mu_i\colon (W,\cH,B)
  \longrightarrow
  (X_i,\cF_i,\Delta_i),
  \qquad i=1,2,
\]
where $B$ is the common strict transform of the boundaries. Factor
each $\mu_i$ into point blow-ups. It is therefore enough to consider
a single blow-up
\[
  \sigma\colon
  (X',\cF',\Delta')
  \longrightarrow
  (X,\cF,\Delta)
\]
of a smooth tangency-free foliated triple, where
$\Delta'=\sigma_*^{-1}\Delta$.

If $p\notin\operatorname{Supp}\Delta$, then $\mu_p=0$ and
\[
  1-\ell_p-\mu_p\in\{0,1\}.
\]
If $p\in\operatorname{Supp}\Delta$, tangency-freeness implies that
$p$ is a regular point of $\cF$ lying on a unique component of
$\Delta$, say of coefficient $a\leq1$. Hence
\[
  \ell_p=0,\qquad
  \mu_p=a,\qquad
  1-\ell_p-\mu_p=1-a\geq0.
\]
Thus, in either case, \eqref{eq:adjoint-blowup-formula} gives
\[
  K_{\cF'}+\Delta'
  =
  \sigma^*(K_{\cF}+\Delta)
  +(1-\ell_p-\mu_p)E
\]
with a nonnegative exceptional coefficient.

Moreover, if $p\notin\operatorname{Supp}\Delta$, then $m_p=0$,
whereas if $p\in\operatorname{Supp}\Delta$, then
$m_p=1$ and $\ell_p=0$. Hence
\[
  m_p(m_p-1+\ell_p)=0,
\]
and \eqref{eq:tangency-drop} shows that
\[
  \tang(\cF',\Delta'_{\red})=0.
\]
Consequently, this argument can be iterated along both morphisms
$\mu_i$, and we obtain
\[
  K_{\cH}+B
  =
  \mu_i^*(K_{\cF_i}+\Delta_i)+\cE_i,
\]
where $\cE_i\geq0$ is $\mu_i$-exceptional. Since adding an effective
exceptional divisor does not change the volume,
\[
  \vol(K_{\cH}+B)
  =
  \vol(K_{\cF_i}+\Delta_i),
  \qquad i=1,2.
\]
The assertion follows.
\end{proof}

In particular, when studying the adjoint volume of a smooth
tangency-free triple, we may replace it by any smooth tangency-free
birational model carrying the corresponding strict-transform
boundary. We shall use this observation by first passing to a
relatively minimal model and then applying clusters until the
boundary becomes tangency-free.

\section{The negative part of \texorpdfstring{$K_{\cF}+\Delta$}{K(F) + Delta}}
\label{sec:negative-part}

This section describes the negative part of $K_{\cF}+\Delta$ and
records the numerical data used later.  Throughout, let $\cF$ be a
reduced foliation on a smooth projective surface $X$, and let
\[
  \Delta=\sum_{i=1}^{l}a_iC_i,
  \qquad
  a_i\in\left[\frac12,1\right),
\]
where every $C_i$ is irreducible and non-$\cF$-invariant.  Assume that
$K_{\cF}+\Delta$ is pseudo-effective, and write
\[
  K_{\cF}+\Delta=P(\Delta)+N(\Delta)
\]
for its Zariski decomposition.

\subsection{Special chains}
\label{subsec:special-chains}

\begin{definition}
Let $\cF$ be a foliation on a surface $X$. A compact curve $\Theta\subset X$ is called an \emph{$\cF$-chain} if 
\begin{enumerate}
\item $\Theta$ is a Hirzebruch--Jung string, $\Theta=\Gamma_1+\cdots+\Gamma_r$;
\item each irreducible component $\Gamma_j$ is $\cF$-invariant;
\item ${\rm Sing}(\cF)\cap \Theta$ are reduced and non-degenerate;
\item ${\rm Z}(\cF,\Gamma_1)=1$ and  ${\rm Z}(\cF,\Gamma_i)=2$ for all $i\geq2$. 
\end{enumerate}
An $\cF$-chain is \emph{maximal} if it cannot be contained in another $\cF$-chain.   (See \cite[Definition 8.1]{Bru15})
\end{definition}

Since every $\Gamma_i$ is rational, the invariant index formula gives
\[
  K_{\cF}\cdot\Gamma_1=-1,
  \qquad
  K_{\cF}\cdot\Gamma_i=0\quad(i\geq2).
\]
Let $M(\Theta)$ be the unique effective $\mathbb Q$-divisor supported
on $\Theta$ satisfying
\[
  M(\Theta)\cdot\Gamma_1=-1,
  \qquad
  M(\Theta)\cdot\Gamma_i=0\quad(i\geq2).
\]

Write $e_i=-\Gamma_i^2$.  For $1\leq i\leq j\leq r$, set
\[
  [e_i,\ldots,e_j]
  :=
  \det\bigl(-\Gamma_k\cdot\Gamma_\ell\bigr)_{i\leq k,\ell\leq j},
\]
and let the empty continuant be one.  Define
\[
  n_{\Theta}:=[e_1,\ldots,e_r],
  \qquad
  \lambda_i:=[e_{i+1},\ldots,e_r]
  \quad(1\leq i\leq r).
\]
Thus $\lambda_r=1$; we shall also
write
\[
  q_{\Theta}:=\lambda_1=[e_2,\ldots,e_r].
\]
The standard Hirzebruch--Jung calculation gives
\begin{equation}
  M(\Theta)
  =\sum_{i=1}^r\gamma_i\Gamma_i,
  \qquad
  \gamma_i=\frac{\lambda_i}{n_{\Theta}}.
  \label{eq:MTheta-coefficients}
\end{equation}
In particular,
\begin{equation}
  -M(\Theta)^2
  =\frac{q_{\Theta}}{n_{\Theta}},
  \qquad
  \gamma_r=\frac1{n_{\Theta}}\leq\frac12.
  \label{eq:MTheta-square}
\end{equation}
We put
\[
  \beta_{\cF}(\Theta)
  :=-M(\Theta)^2
  =\frac{q_{\Theta}}{n_{\Theta}}.
\]

We recall that if $(X,\cF)$ is relatively minimal and $K_{\cF}$ is
pseudo-effective, then the connected components of the negative part
of $K_{\cF}$ are precisely the maximal $\cF$-chains.  More precisely,
if
\[
  K_{\cF}=P+N,
\]
then
\begin{equation}
  N=\sum_{\Xi}M(\Xi),
  \qquad
  \lfloor N\rfloor=0,
  \label{eq:McQuillan-negative-part}
\end{equation}
where $\Xi$ runs over the maximal $\cF$-chains; see \cite[Theorem~8.1]{Bru15} or \cite{McQ08}.

\begin{definition}
An $\cF$-chain $\Theta=\Gamma_1+\cdots+\Gamma_r$ is a
$(\Delta,\cF)$-\emph{chain} if
\[
  \Delta\cdot\Gamma_1<1,
  \qquad
  \Delta\cdot\Gamma_i=0\quad(i\geq2).
\]
We put
\[
  \theta_{\Delta}(\Theta):=\Delta\cdot\Gamma_1.
\]
It is \emph{maximal} if it is not properly contained in another
$(\Delta,\cF)$-chain.
\end{definition}

For a $(\Delta,\cF)$-chain $\Theta$, let $M(\Delta,\Theta)$ be the
unique $\mathbb Q$-divisor supported on $\Theta$ such that
\[
  M(\Delta,\Theta)\cdot\Gamma_i
  =(K_{\cF}+\Delta)\cdot\Gamma_i
  \qquad(1\leq i\leq r).
\]
Then
\begin{equation}
  M(\Delta,\Theta)
  =\bigl(1-\theta_{\Delta}(\Theta)\bigr)M(\Theta),
  \qquad
  0<M(\Delta,\Theta)\leq M(\Theta).
  \label{eq:MDeltaTheta}
\end{equation}
Moreover,
\[
  \theta_{\Delta}(\Theta)\in
  \{0\}\cup\left[\frac12,1\right).
\]

The coefficient restriction has the following useful consequence.  If
$\theta_{\Delta}(\Theta)>0$, then there is a unique component $C_j$ of $\Delta$
meeting $\Theta$; it meets $\Gamma_1$ transversely once, is disjoint
from $\Gamma_i$ for $i\geq2$, and
\[
  \theta_{\Delta}(\Theta)=a_j.
\]
Indeed, two branches, or an intersection of multiplicity at least two,
would give $\Delta\cdot\Gamma_1\geq1$.  Consequently,
\begin{equation}
  M(\Delta,\Theta)\cdot\Delta
  =\theta_{\Delta}(\Theta)\bigl(1-\theta_{\Delta}(\Theta)\bigr)
   \beta_{\cF}(\Theta)=\theta_{\Delta}(\Theta)\bigl(1-\theta_{\Delta}(\Theta)\bigr)\frac{q_{\Theta}}{n_{\Theta}}.
  \label{eq:MDeltaTheta-boundary}
\end{equation}

\subsection{The Zariski decomposition of $K_{\cF}+\Delta$}
\label{subsec:adjoint-zariski-decomposition}

\begin{definition}\label{def:(Delta,F)-exc}
An $\cF$-exceptional curve $E$ is called
$(\Delta,\cF)$-\emph{exceptional} if either
\begin{enumerate}
  \item $K_{\cF}\cdot E=-1$ and $\Delta\cdot E<1$, or
  \item $K_{\cF}\cdot E=\Delta\cdot E=0$.
\end{enumerate}
A $(\Delta,\cF)$-exceptional curve $E$ is \emph{redundant} if there is
a $(K_{\cF}+\Delta)$-non-positive birational morphism
\[
  \sigma\colon(X,\cF,\Delta)\longrightarrow
  (X',\cF',\Delta')
\]
between reduced foliated triples which contracts $E$, and for which
$\sigma(E)$ is either a regular point of $\cF'$ or lies on a
$(\Delta',\cF')$-chain.  The triple $(X,\cF,\Delta)$ is
\emph{irredundant} if $(X,\cF)$ is reduced and contains no redundant
$(\Delta,\cF)$-exceptional curve.
\end{definition}

\begin{remark}[Redundant contractions]
\label{rem:redundant-no-saddle-node}

A redundant $(\Delta,\cF)$-exceptional curve contains no saddle-node
of $\cF$.  Moreover, if
\[
  \sigma\colon(X,\cF,\Delta)
  \longrightarrow(X',\cF',\Delta')
\]
a single blow-up contracting such a curve, then
\[
  \tang(\cF,\Delta_{\red})
  =
  \tang(\cF',\Delta'_{\red}).
\]

Indeed, let $p$ be the image of the contracted curve.  If
$K_{\cF}\cdot E=-1$, then $\ell_p=0$ and
$\mu_p=\Delta\cdot E<1$.  Since the nonzero coefficients of
$\Delta'$ are at least $1/2$, one has $m_p\leq1$.  If
$K_{\cF}\cdot E=\Delta\cdot E=0$, then $\ell_p=1$ and $m_p=0$.
Thus in either case
\[
  m_p(m_p-1+\ell_p)=0,
\]
and the assertion follows from \eqref{eq:tangency-drop}.  Notice that
the coefficient bound $a_i\geq1/2$ is essential here.

If, in addition, $K_{\cF}+\Delta$ is big, then the defining
$(K_{\cF}+\Delta)$-non-positivity of $\sigma$ also gives
\[
  \vol(K_{\cF}+\Delta)
  =
  \vol(K_{\cF'}+\Delta').
\]
\end{remark}

We now turn to the structure of the negative part.

\begin{theorem}[Negative part of $K_{\cF}+\Delta$]
\label{thm:zariski-decomposition-foliated-pairs}
Assume that $(X,\cF,\Delta)$ is irredundant.  If
$\Theta_1,\ldots,\Theta_s$ are all the maximal
$(\Delta,\cF)$-chains, then
\begin{equation}
  N(\Delta)=\sum_{i=1}^sM(\Delta,\Theta_i).
  \label{eq:negative-part-chain-decomposition}
\end{equation}
In particular, $\operatorname{Supp}N(\Delta)$ is the disjoint union of
the maximal $(\Delta,\cF)$-chains.
\end{theorem}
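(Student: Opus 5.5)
The strategy is to verify the characterization of the Zariski decomposition directly. We set
\[
  N':=\sum_{i=1}^sM(\Delta,\Theta_i),\qquad P':=K_{\cF}+\Delta-N'.
\]
By uniqueness of the Zariski decomposition, it suffices to check three things. First, $N'\geq0$ has negative definite intersection matrix. Second, $P'\cdot\Gamma=0$ for every component $\Gamma$ of $N'$. Third, $P'$ is nef.

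The first two are formal. Each $\Theta_i$ is a Hirzebruch--Jung string, so its matrix is negative definite, and \eqref{eq:MDeltaTheta} gives $M(\Delta,\Theta_i)>0$. Distinct maximal $(\Delta,\cF)$-chains are disjoint. Indeed, if two of them met or shared a component, the union would contain an invariant curve meeting the first curve of another chain, and the standard Brunella argument for $\cF$-chains shows that two maximal $\cF$-chains are disjoint, since a non-degenerate singularity has exactly two separatrices and the $\mathrm Z$-index conditions fix the configuration. Given disjointness, the defining property $M(\Delta,\Theta)\cdot\Gamma_j=(K_{\cF}+\Delta)\cdot\Gamma_j$ gives $P'\cdot\Gamma=0$ on every component.

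The real content is nefness of $P'$. Let $C$ be an irreducible curve with $P'\cdot C<0$. Then $C\not\subset\Supp N'$, so $N'\cdot C\geq0$ and $(K_{\cF}+\Delta)\cdot C<0$. I split into cases.

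If $C$ is not $\cF$-invariant, then $C$ is either a component $C_j$ of $\Delta$ or not in its support. If $C$ is not in $\Supp\Delta$, then $K_{\cF}\cdot C\geq\tang-C^2$; better, $K_{\cF}\cdot C=\tang(\cF,C)-C^2\geq0$ when $C^2\leq0$. If $C^2>0$, then $C$ is nef and pseudo-effectivity gives $(K_{\cF}+\Delta)\cdot C\geq0$. For $C=C_j$, I use the same tangency formula together with $a_j<1$: the product $(K_{\cF}+a_jC_j)\cdot C_j$ equals $\tang(\cF,C_j)-(1-a_j)C_j^2$, which is nonnegative when $C_j^2\leq 0$. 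The other components of $\Delta$ contribute nonnegatively.

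So $C$ is $\cF$-invariant. The index formula \eqref{eq:invariant-index-formula} then forces $K_{\cF}\cdot C<0$, or $K_{\cF}\cdot C=0$ with $\Delta\cdot C=0$. Hence $C$ is a smooth rational curve with $\mathrm Z\leq1$, or with $\mathrm Z=2$ and $\Delta\cdot C=0$. I must also have $C^2<0$, because $K_{\cF}+\Delta$ is pseudo-effective and its restriction is negative.

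This is where McQuillan's classification enters. Running the argument of \cite[Theorem~8.1]{Bru15} with the boundary, the curve $C$ is either an $\cF$-exceptional curve or the first curve of an $\cF$-chain. If it is $\cF$-exceptional with $(K_{\cF}+\Delta)\cdot C<0$, then it is $(\Delta,\cF)$-exceptional. Contracting it is $(K_{\cF}+\Delta)$-non-positive, and its image is regular or lies on a chain, so $C$ would be redundant. This contradicts irredundancy.

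In the remaining case $C$ meets $\Supp N'$ or extends a chain. I would adapt Brunella's argument: if $C$ attaches to the last curve $\Gamma_r$ of some $\Theta_i$, then $\Theta_i+C$ or $C+\Theta_i$ is a longer $(\Delta,\cF)$-chain, contradicting maximality. Otherwise, since the coefficients of $M(\Delta,\Theta_i)$ are below $1$ and $\gamma_r\leq 1/2$, the positive contribution $N'\cdot C$ is too small to make $P'\cdot C<0$. I expect this step, bounding $N'\cdot C$ when $C$ meets one or two chains at an interior point or at the first curve, to be the main obstacle. It needs careful use of the separatrix theorem (Theorem~\ref{thm:separatrix}), the coefficient bounds, and $a_i\geq1/2$ to rule out configurations.
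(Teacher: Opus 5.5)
Your strategy of checking that $P':=K_{\cF}+\Delta-N'$ is nef and invoking uniqueness is legitimate. However, the nefness step starts with a sign error and then postpones exactly the part that carries the content.

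From $P'\cdot C<0$ and $N'\cdot C\ge0$ you cannot deduce $(K_{\cF}+\Delta)\cdot C<0$. Since $(K_{\cF}+\Delta)\cdot C=P'\cdot C+N'\cdot C$, you only get $(K_{\cF}+\Delta)\cdot C<N'\cdot C$.

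\emph{Non-invariant case.} Your argument proves only $(K_{\cF}+\Delta)\cdot C\ge0$, so it does not exclude non-invariant curves meeting $\Supp N'$. This is not a formality. Take a $[2,\ldots,2]$-chain of length $r\ge3$ with $\theta=0$, and a smooth curve through the node $\Gamma_1\cap\Gamma_2$, transverse to both branches. It has local tangency $1$ there but picks up $\gamma_1+\gamma_2=(2r-1)/(r+1)>1$ from $N'$.

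\emph{Invariant case.} The reduction to $K_{\cF}\cdot C<0$ is likewise unfounded, and the heuristic that $N'\cdot C$ is small fails. Take a smooth rational invariant $C$ whose only singularities are the free points of three one-curve chains $\Gamma$ with $\Gamma^2=-2$ and $\theta=0$. Then $K_{\cF}\cdot C=1<3/2=N'\cdot C$. Excluding such configurations needs a global input such as the separatrix theorem. That theorem requires negative definiteness of $C$ together with the chains it meets, and nothing in your setup provides this. Appealing to Brunella's Theorem~8.1 ``run with the boundary'' amounts to assuming the statement. Note also that an invariant curve outside the chains can meet a chain only at the free singular point of its last component. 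So the interior/first-curve configurations you flag as the main obstacle do not occur.

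The missing idea is to compare $N'$ with the actual negative part, rather than test $P'$ against all curves. Since $(K_{\cF}+\Delta-N')\cdot\Gamma=0$ for every component $\Gamma$ of $N'$, the negativity lemma gives $N'\le N(\Delta)$. Hence $P'\cdot C=P(\Delta)\cdot C+(N(\Delta)-N')\cdot C\ge0$ for every $C\not\subset\Supp N(\Delta)$. This confines every potentially bad curve to the negative definite set $\Supp N(\Delta)$, where the separatrix theorem applies.

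That is the paper's route:
\begin{enumerate}
\item It gives each non-invariant component $D$ of $N(\Delta)$ coefficient one via an auxiliary divisor $T$. The maximal $(\Delta+T,\cF)$-chains then avoid $\Supp T$, and $\tang(\cF,D)\ge0$ yields $W\cdot D\ge0$ for $W:=N(\Delta)-V+T$.
\item For a component $C$ of $W$ with $W\cdot C<0$ (necessarily invariant and outside $\Supp V$), it uses $V\cdot C\le k/2$, the index formula and the separatrix theorem to force $p_a(C)=0$, $h=1$, $k\le1$.
\item It then contradicts irredundancy if $C^2=-1$ and maximality if $C^2\le-2$.
\end{enumerate}
Once the negativity-lemma step is inserted, your verification scheme essentially reduces to this argument.
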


\begin{proof}
Let $\mathfrak{S}$ be the set of non-$\cF$-invariant components of
$N(\Delta)$, and set
\[
  T=\sum_{D\in\mathfrak{S}}\alpha_DD,
  \qquad
  \alpha_D=
  \begin{cases}
    1,&D\not\subseteq\operatorname{Supp}\Delta,\\
    1-a_i,&D=C_i.
  \end{cases}
\]
Thus every $D\in\mathfrak{S}$ has coefficient one in $\Delta+T$.

Let $\Theta_1',\ldots,\Theta_t'$ be all the maximal
$(\Delta+T,\cF)$-chains and put
\[
  V:=\sum_{i=1}^tM(\Delta+T,\Theta_i').
\]
Every $\Theta_i'$ is disjoint from $\operatorname{Supp}T$.  Moreover,
for every component $\Gamma$ of $V$,
\[
  (K_{\cF}+\Delta-V)\cdot\Gamma=0.
\]
The supported negativity lemma therefore gives
\begin{equation}
  0\leq V\leq N(\Delta),
  \qquad
  (N(\Delta)-V)\cdot\Gamma=0
  \quad(\Gamma\subset\operatorname{Supp}V).
  \label{eq:V-below-NDelta}
\end{equation}
Set
\[
  W:=N(\Delta)-V+T.
\]
We prove that $W=0$.

First let $D\in\mathfrak{S}$.  Since $V\cdot D=0$ and
$P(\Delta)\cdot D=0$, we have
\[
W\cdot D\geq (K_{\cF}+\Delta)\cdot D+\alpha_DD^2\geq K_{\cF}\cdot D+D^2
   =\tang(\cF,D)\geq0.
\]
Here, if $D\not\subseteq\operatorname{Supp}\Delta$, we use
$\Delta\cdot D\geq0$ and $\alpha_D=1$; if $D=C_i$, we use
$\Delta\cdot D\geq a_iD^2$ and $\alpha_D=1-a_i$.

Next, if $\Gamma$ is a component of $V$, then
$T\cdot\Gamma=0$, and hence \eqref{eq:V-below-NDelta} gives
\[
  W\cdot\Gamma=0.
\]

Suppose that $W\neq0$.  Its support is contained in
$\operatorname{Supp}N(\Delta)$ and therefore has negative-definite
intersection matrix, which implies $W^2<0$.  We may consequently choose an irreducible
component $C$ of $W$ such that $W\cdot C<0$.  By the preceding two
paragraphs, $C$ is $\cF$-invariant and belongs to neither
$\mathfrak{S}$ nor $\operatorname{Supp}V$.

Let $k$ be the number of chains among the $\Theta_i'$ meeting $C$.
Each such chain is attached to $C$ at its terminal component, and
\eqref{eq:MTheta-square} and \eqref{eq:MDeltaTheta} give
\[
  V\cdot C\leq\frac{k}{2}.
\]
Let $h$ count the remaining singularities on $C$ admitting a
separatrix not contained in the union of $C$ and the chains meeting
$C$.  The separatrix theorem (cf.~Theorem~\ref{thm:separatrix}) gives $h\geq1$.  The invariant index
formula \eqref{eq:invariant-index-formula} then yields
\[
  K_{\cF}\cdot C
  \geq h+k-2+2p_a(C).
\]
It follows that
\begin{align}
  0>W\cdot C
  &=(K_{\cF}+\Delta-V+T)\cdot C\geq h+\frac{k}{2}-2+2p_a(C)+(\Delta+T)\cdot C.
  \label{eq:W-negative-component}
\end{align}
Since every positive coefficient of $\Delta+T$ is at least $1/2$ and
every component of $T$ has coefficient one, inequality
\eqref{eq:W-negative-component} forces
\[
  T\cdot C=0,
  \qquad
  p_a(C)=0,
  \qquad
  h=1,
  \qquad
  k\leq1.
\]
Furthermore,
\[
  V\cdot C-\Delta\cdot C
  >K_{\cF}\cdot C\geq k-1.
\]
Since $K_{\cF}\cdot C$ is an integer, there are precisely two
possibilities:
\begin{enumerate}
  \item $k=0$, $K_{\cF}\cdot C=-1$, and $\Delta\cdot C<1$;
  \item $k=1$, $K_{\cF}\cdot C=0$, and
        $\Delta\cdot C<1/2$, hence $\Delta\cdot C=0$.
\end{enumerate}
In both cases, all singularities of $\cF$ on $C$ are non-degenerate.
If $C^2=-1$, its contraction is $(K_{\cF}+\Delta)$-non-positive and
maps $C$ either to a regular point or to a point on the unique chain
meeting it.  Thus $C$ is redundant, contradicting irredundancy.  If
$C^2\leq-2$, then $C$ itself in the first case, or the union of $C$
with the unique chain meeting it in the second case, extends a
$(\Delta+T,\cF)$-chain.  This contradicts maximality.  Hence $W=0$.

Finally, both $N(\Delta)-V$ and $T$ are effective.  The equality
$W=0$ therefore implies
\[
  T=0,
  \qquad
  N(\Delta)=V.
\]
The maximal $(\Delta+T,\cF)$-chains are now precisely the maximal
$(\Delta,\cF)$-chains, proving the theorem.
\end{proof}

\begin{remark}
A more general version, including a description of the null locus of
the positive part, is given in \cite[Theorem~1.4]{LWX26}.
\end{remark}

The decomposition immediately gives the numerical identities used in
the cluster calculations.

\begin{corollary}\label{cor:NDelta-numerical-data}
With the notation of Theorem~\ref{thm:zariski-decomposition-foliated-pairs},
put $\theta_i:=\theta_{\Delta}(\Theta_i)$.  Then
\begin{align}
  N(\Delta)^2
  &=-\sum_{i=1}^s
    \bigl(1-\theta_i\bigr)^2\beta_{\cF}(\Theta_i),
    \label{eq:NDelta-square}\\
  N(\Delta)\cdot\Delta
  &=\sum_{i=1}^s
    \theta_i\bigl(1-\theta_i\bigr)\beta_{\cF}(\Theta_i).
    \label{eq:NDelta-boundary}
\end{align}
\end{corollary}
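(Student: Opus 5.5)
By Theorem~\ref{thm:zariski-decomposition-foliated-pairs}, the negative part decomposes as
\[
  N(\Delta)=\sum_{i=1}^s M(\Delta,\Theta_i),
\]
where the $\Theta_i$ are the maximal $(\Delta,\cF)$-chains.  These chains are pairwise disjoint, so every cross term $M(\Delta,\Theta_i)\cdot M(\Delta,\Theta_j)$ with $i\neq j$ vanishes.  Both identities therefore reduce to a computation on a single chain, followed by summation over $i$.

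For \eqref{eq:NDelta-square}, I will use \eqref{eq:MDeltaTheta}, which gives $M(\Delta,\Theta_i)=(1-\theta_i)M(\Theta_i)$.  Squaring yields
\[
  M(\Delta,\Theta_i)^2=(1-\theta_i)^2M(\Theta_i)^2=-(1-\theta_i)^2\beta_{\cF}(\Theta_i),
\]
where the last equality is the definition $\beta_{\cF}(\Theta)=-M(\Theta)^2$ from \eqref{eq:MTheta-square}.  Summing over the disjoint chains gives \eqref{eq:NDelta-square}.

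For \eqref{eq:NDelta-boundary}, I expand by linearity:
\[
  N(\Delta)\cdot\Delta=\sum_i M(\Delta,\Theta_i)\cdot\Delta,
\]
and then apply \eqref{eq:MDeltaTheta-boundary} to each summand.  For completeness I will re-derive that per-chain identity.

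If $\theta_i=0$, then $\Delta\cdot\Gamma_j=0$ for every $j$.  Since the components of $\Delta$ are non-invariant while the $\Gamma_j$ are invariant, no $\Gamma_j$ is a component of $\Delta$, so $\Delta\cdot\Gamma_j\ge0$ is a sum of local intersections.  Hence $\Delta$ meets no $\Gamma_j$, and the summand vanishes, consistent with the formula.

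If $\theta_i>0$, then by the remark preceding \eqref{eq:MDeltaTheta-boundary} there is a unique component $C_j$ of $\Delta$ meeting $\Theta_i$.  It meets only $\Gamma_1$, transversally in one point, and $a_j=\theta_i$.  Hence
\[
  M(\Delta,\Theta_i)\cdot\Delta=(1-\theta_i)\,\gamma_1\,a_j,
\]
where $\gamma_1$ is the coefficient of $\Gamma_1$ in $M(\Theta_i)$.  To identify $\gamma_1$, note that $M(\Theta)\cdot\Gamma_1=-1$ and $M(\Theta)\cdot\Gamma_k=0$ for $k\ge2$, so
\[
  -M(\Theta)^2=-\sum_k\gamma_kM(\Theta)\cdot\Gamma_k=\gamma_1.
\]
Thus $\gamma_1=\beta_{\cF}(\Theta)=q_\Theta/n_\Theta$, matching \eqref{eq:MTheta-coefficients} since $\lambda_1=q_\Theta$.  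This gives the per-chain value $\theta_i(1-\theta_i)\beta_{\cF}(\Theta_i)$, and summing over $i$ yields \eqref{eq:NDelta-boundary}.

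The only point needing care is the justification that $\Delta$ meets the chain exactly once, transversally, at $\Gamma_1$ when $\theta_i>0$.  This follows from the coefficient bound $a_j\ge\tfrac12$ together with $\Delta\cdot\Gamma_1<1$, which is already recorded in the text before \eqref{eq:MDeltaTheta-boundary}.  Everything else is linear algebra, so I expect no real obstacle.
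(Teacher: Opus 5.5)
Your proposal is correct and is essentially the paper's proof: disjointness of the maximal chains removes all cross terms, and the per-chain values come from \eqref{eq:MTheta-square}, \eqref{eq:MDeltaTheta}, and \eqref{eq:MDeltaTheta-boundary}. Your re-derivation of the last identity is fine, though you do not actually need uniqueness of the meeting component: the defining conditions $\Delta\cdot\Gamma_1=\theta_i$ and $\Delta\cdot\Gamma_k=0$ for $k\geq2$ give $M(\Theta_i)\cdot\Delta=\sum_k\gamma_k\,\Delta\cdot\Gamma_k=\gamma_1\theta_i$ directly.
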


\begin{proof}
Distinct maximal chains are disjoint.  The assertions therefore follow
from \eqref{eq:MTheta-square}, \eqref{eq:MDeltaTheta}, and
\eqref{eq:MDeltaTheta-boundary}.
\end{proof}

\begin{corollary}\label{cor:R(F,D)}
Suppose that $(X,\cF,\Delta)$ is irredundant with
$\nu(\cF)\geq0$.  Set
\[
  R(\cF,\Delta)
  :=\sum_j a_j(2-a_j)N\cdot C_j+N^2-N(\Delta)^2.
\]
Then
\begin{equation}
  R(\cF,\Delta)
  \geq
  \bigl(N+N(\Delta)\bigr)\cdot\Delta
  +N^2-N(\Delta)^2
  =P(\Delta)\cdot N
  \geq0.
  \label{eq:R(D,F)>=0}
\end{equation}
\end{corollary}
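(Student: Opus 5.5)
The plan is to prove the displayed chain of inequalities from left to right. The first inequality reduces to a coefficient-by-coefficient comparison of $N$ with $N(\Delta)$ along the boundary. The middle equality is pure algebra from the two Zariski decompositions. The last inequality is nefness of $P(\Delta)$.

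\emph{The middle equality.} Subtracting $K_{\cF}=P+N$ from $K_{\cF}+\Delta=P(\Delta)+N(\Delta)$ gives
\[
  \Delta+N-N(\Delta)=P(\Delta)-P .
\]
Hence
\[
  \bigl(N+N(\Delta)\bigr)\cdot\Delta+N^2-N(\Delta)^2
  =\bigl(N+N(\Delta)\bigr)\cdot\bigl(\Delta+N-N(\Delta)\bigr)
  =\bigl(N+N(\Delta)\bigr)\cdot\bigl(P(\Delta)-P\bigr).
\]
Three of the resulting terms vanish:
\begin{itemize}
  \item $N(\Delta)\cdot P(\Delta)=0$ and $N\cdot P=0$, by the orthogonality property of the Zariski decomposition.
  \item $N(\Delta)\cdot P=0$. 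By Theorem~\ref{thm:zariski-decomposition-foliated-pairs}, $N(\Delta)$ is supported on $(\Delta,\cF)$-chains, which are in particular $\cF$-chains. I will check that every $\cF$-chain $\Theta$ lies in $\operatorname{Supp}N$ and that $P\cdot\Gamma=0$ for each of its components; this is the step below.
\end{itemize}
What remains is $P(\Delta)\cdot N$, which is $\geq0$ because $P(\Delta)$ is nef and $N\geq0$.

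\emph{The key comparison $M(\Theta)\leq N$.} For an $\cF$-chain $\Theta$ and each of its components $\Gamma_k$, we have
\[
  \bigl(N-M(\Theta)\bigr)\cdot\Gamma_k=\bigl(K_{\cF}-M(\Theta)\bigr)\cdot\Gamma_k-P\cdot\Gamma_k=-P\cdot\Gamma_k\leq0 .
\]
Write $N-M(\Theta)=A-B$ with $A\geq0$ supported off $\Theta$ and $B$ supported on $\Theta$. Then $B\cdot\Gamma_k\geq A\cdot\Gamma_k\geq0$ for every $k$. The negativity lemma on the negative-definite chain $\Theta$ then gives $-B\geq0$, that is, $M(\Theta)\leq N|_{\Theta}$. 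In particular $\Theta\subseteq\operatorname{Supp}N$ and $P\cdot\Gamma_k=0$, which is what the middle equality needed. The same argument applied to a disjoint union of chains gives
\[
  N\geq\sum_{i}M(\Theta_i),
\]
the sum taken over the maximal $(\Delta,\cF)$-chains.

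\emph{The first inequality.} Since $N\cdot\Delta=\sum_ja_jN\cdot C_j$, it suffices to prove
\[
  \sum_ja_j(1-a_j)\,N\cdot C_j\;\geq\;N(\Delta)\cdot\Delta
  =\sum_i\theta_i(1-\theta_i)\beta_{\cF}(\Theta_i),
\]
using \eqref{eq:NDelta-boundary}. Only chains with $\theta_i>0$ contribute to the right-hand side. For such a chain, the discussion before \eqref{eq:MDeltaTheta-boundary} shows that a unique $C_{j(i)}$ meets $\Theta_i$, transversally once along $\Gamma_1$, and that $a_{j(i)}=\theta_i$. It follows that
\[
  M(\Theta_i)\cdot C_{j(i)}=\gamma_1=\frac{q_{\Theta_i}}{n_{\Theta_i}}=\beta_{\cF}(\Theta_i).
\]
Each $C_j$ is non-invariant, so it is not a component of $N$ or of any $M(\Theta_i)$. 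Therefore all the relevant intersection numbers are nonnegative. Combining this with $N\geq\sum_iM(\Theta_i)$ gives
\[
  N\cdot C_j\;\geq\;\sum_{i:\,j(i)=j}\beta_{\cF}(\Theta_i)
  \qquad\text{for each } j .
\]
Multiplying by $a_j(1-a_j)=\theta_i(1-\theta_i)$ and summing over $j$ yields the inequality.

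The step I expect to need the most care is the comparison $M(\Theta)\leq N$, in particular confirming that the negativity lemma applies with $P$ restricted to the chain. If that causes trouble, the fallback is the description \eqref{eq:McQuillan-negative-part} together with monotonicity of $M$ under extending a chain to a maximal one, which is available once $(X,\cF)$ is known to be relatively minimal.
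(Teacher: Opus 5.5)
Your proof is correct and has the same skeleton as the paper's. You compare $N$ with $N_{\mathrm{ch}}=\sum_iM(\Theta_i)$ and use that a chain with $\theta_i>0$ meets a single $C_j$, transversally once on $\Gamma_1$, with $a_j=\theta_i$. You then finish with the identity $\Delta+N-N(\Delta)=P(\Delta)-P$ and the nefness of $P(\Delta)$.

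The real difference is how you obtain $N_{\mathrm{ch}}\le N$, and hence $P\cdot N(\Delta)=0$. The paper embeds each $\Theta_i$ in a maximal $\cF$-chain and invokes McQuillan's description \eqref{eq:McQuillan-negative-part}, which is a statement about relatively minimal foliations. You instead note that $(N-M(\Theta))\cdot\Gamma_k=-P\cdot\Gamma_k\le0$ on the chain and apply the negativity lemma to the negative-definite string, using nothing but nefness of $P$. This gives genuine extra generality. Your comparison holds for every irredundant triple, including cluster outputs, which are in general not relatively minimal. For instance, the $(-1)$-curve ending a simple regular cluster lies in $\Supp N$ with coefficient at least one, so \eqref{eq:McQuillan-negative-part} fails there. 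The paper nonetheless applies this corollary to such triples in Corollary~\ref{cor:cluster-intermediate-models-irredundant}.

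One step you leave implicit, as the paper also does, is that no $C_j$ is a component of $N$. This is what makes $(N-N_{\mathrm{ch}})\cdot C_j\geq0$. Non-invariance of $C_j$ gives it only once you know that $\Supp N$ consists of $\cF$-invariant curves. That is true for any reduced $\cF$ with $K_{\cF}$ pseudo-effective, by the following argument:
\begin{enumerate}
\item Contract to a relatively minimal model $\pi\colon X\to X_0$.
\item Write $K_{\cF}=\pi^*K_{\cF_0}+\sum c_EE$ with $c_E\geq0$ and every $E$ exceptional, hence invariant.
\item Check that $N=\pi^*N_0+\sum c_EE$ by uniqueness of the Zariski decomposition, since $\pi^*P_0$ is nef and orthogonal to a negative-definite support.
\end{enumerate}
Invariance of $\Supp N$ then follows from McQuillan's theorem on $X_0$. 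So your route removes McQuillan from the comparison step but not from the proof altogether, and you should add this line.
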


\begin{proof}
Let $\Theta_1,\ldots,\Theta_s$ be the maximal
$(\Delta,\cF)$-chains and put
\[
  N_{\mathrm{ch}}:=\sum_{i=1}^sM(\Theta_i).
\]
Every $\Theta_i$ is contained in a maximal $\cF$-chain.  Hence
McQuillan's decomposition \eqref{eq:McQuillan-negative-part}, together
with the supported negativity lemma, gives
\[
  0\leq N_{\mathrm{ch}}\leq N.
\]
For each $i$, the observation preceding
\eqref{eq:MDeltaTheta-boundary} gives
\[
  M(\Delta,\Theta_i)\cdot\Delta
  =\sum_j a_j(1-a_j)M(\Theta_i)\cdot C_j.
\]
Summing over the maximal chains, we obtain
\[
N(\Delta)\cdot\Delta
=\sum_j a_j(1-a_j)N_{\mathrm{ch}}\cdot C_j\leq\sum_j a_j(1-a_j)N\cdot C_j.
\]
Indeed, $N-N_{\mathrm{ch}}$ is effective and $\cF$-invariant, whereas
each $C_j$ is non-$\cF$-invariant.  Consequently,
\begin{align*}
  \sum_j a_j(2-a_j)N\cdot C_j
  &=N\cdot\Delta+
    \sum_j a_j(1-a_j)N\cdot C_j\\
  &\geq \bigl(N+N(\Delta)\bigr)\cdot\Delta.
\end{align*}
This proves the first inequality in \eqref{eq:R(D,F)>=0}.

Finally, $N(\Delta)\leq N_{\mathrm{ch}}\leq N$.  Hence
$P\cdot N(\Delta)=0$, while
$P(\Delta)\cdot N(\Delta)=0$.  Using
\[
  \Delta+N-N(\Delta)=P(\Delta)-P,
\]
we compute
\begin{align*}
  &\bigl(N+N(\Delta)\bigr)\cdot\Delta
    +N^2-N(\Delta)^2\\
  &\qquad
   =\bigl(\Delta+N-N(\Delta)\bigr)
     \cdot\bigl(N+N(\Delta)\bigr)\\
  &\qquad
   =\bigl(P(\Delta)-P\bigr)
     \cdot\bigl(N+N(\Delta)\bigr)
   =P(\Delta)\cdot N.
\end{align*}
Since $P(\Delta)$ is nef and $N$ is effective, the last quantity is
nonnegative.
\end{proof}

\section{Finite covers of foliated surfaces}

This section studies finite morphisms between foliated surfaces.  Only
the non-invariant part of the branch locus contributes to the foliated
Riemann--Hurwitz formula; the invariant part is foliated crepant.  When
the horizontal ramification indices are constant over each branch
component, the ramification is encoded by a boundary downstairs.
After resolving the cover in a way adapted to this horizontal branch
divisor, we obtain the smooth models used in the sequel.

\subsection{Riemann--Hurwitz and branch boundaries}
Let
\[
  \Pi\colon (X,\cF)\longrightarrow (Y,\cG)
\]
be a finite surjective morphism of normal foliated surfaces, where
\(\cF\) is the saturated pullback of \(\cG\).  For a prime divisor
\(D\subset X\), let \(e_D\) be the ramification index of \(\Pi\) at
the generic point of \(D\), and put
\[
R_{\Pi}:=
  \sum
  (e_D-1)D,\qquad
  R_{\Pi,v}
  :=
  \sum_{\Pi(D):\cG\text{-invariant}}
  (e_D-1)D,\qquad R_{\Pi,h}=R_{\Pi}-R_{\Pi,v}.
\]

\begin{proposition}[Foliated Riemann--Hurwitz]
\label{prop:foliated-Riemann-Hurwitz}
Assume that \(K_{\cG}\) is \(\mathbb Q\)-Cartier.  Then
\[
  K_{\cF}=\Pi^*K_{\cG}+R_{\Pi,h}.
\]
\end{proposition}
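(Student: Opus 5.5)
Both sides are divisors on the normal surface $X$, so I will compare them at the generic point of each prime divisor $D\subset X$. This reduces the statement to a codimension-one, purely local computation on a smooth open set, and the $\mathbb Q$-Cartier hypothesis on $K_{\cG}$ is used only to make sense of $\Pi^*K_{\cG}$. Removing finitely many points from $X$ and $Y$, I may assume that both surfaces are smooth and that both foliations are regular. I may also assume that $\Pi$ is étale away from the branch curves and that, near a general point $q$ of $\Pi(D)$ with preimage $p\in D$, the map takes the normal form $(x,y)\mapsto(x^e,y)$, where $e=e_D$ and $\{x=0\}=D$. This holds because a finite map of smooth curves germs along a divisor is, generically along that divisor, a cyclic ramified cover in suitable coordinates. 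Away from the branch locus $\Pi$ is étale and $T_{\cF}=\Pi^*T_{\cG}$, so the identity holds there with no correction term.

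\textbf{Non-invariant branch component.} Suppose $\Pi(D)=\{u=0\}$ is not $\cG$-invariant. At a general point, $\cG$ is regular and transverse to $\{u=0\}$, so I choose coordinates $(u,v)$ with $\cG$ generated by $\partial_u$. Pulling back by $u=x^e$, $v=y$, the foliation $\cF$ has leaves $\{y=\mathrm{const}\}$ and is generated by $\partial_x$. Under this, $\Pi_*\partial_x = e x^{e-1}\partial_u$, so $T_{\cF}=\Pi^*T_{\cG}\otimes\cO_X(-(e-1)D)$ locally. Hence $K_{\cF}=\Pi^*K_{\cG}+(e-1)D$ near $p$, which is exactly the coefficient of $D$ in $R_{\Pi,h}$.

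\textbf{Invariant branch component.} Suppose instead $\Pi(D)=\{u=0\}$ is $\cG$-invariant. At a general point, the generator is $\partial_v$, since the leaves are $\{u=c\}$. Its pullback $\partial_y$ generates $\cF$, and $\Pi_*\partial_y=\partial_v$ with no vanishing factor, so $K_{\cF}=\Pi^*K_{\cG}$ near $p$. The coefficient of $D$ is therefore $0$, matching the fact that $D$ only appears in $R_{\Pi,v}$.

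The main point to verify is the reduction to the normal form $(x^e,y)$ compatibly with the foliation at a general point of $D$. The plan is to pick $v$ to be a first integral or a transversal coordinate for $\cG$ near a general point of $\Pi(D)$, and then to note that $\Pi$ is a cyclic cover branched along a smooth divisor there, so that $v$ lifts and $x$ can be taken as an $e$-th root of $u$. Once this is in place, the two local computations above cover all prime divisors of $X$. Equality of Weil divisors on a normal surface is determined in codimension one, so this proves $K_{\cF}=\Pi^*K_{\cG}+R_{\Pi,h}$.
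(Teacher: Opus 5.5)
Your proof is correct. It uses the same codimension-one reduction and the same local normal form $(x,y)\mapsto(x^e,y)$ at a general point of each ramification divisor as the paper. The difference is that you compute on the tangent side, whereas the paper computes on the conormal side.

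The paper combines the classical formula $K_X=\Pi^*K_Y+R_\Pi$ with $K_X=K_{\cF}+N_{\cF}^*$ and $K_Y=K_{\cG}+N_{\cG}^*$. This reduces the claim to $N_{\cF}^*=\Pi^*N_{\cG}^*+R_{\Pi,v}$. In that version the correction appears along the \emph{invariant} ramification divisors, coming from $\Pi^*(\mathrm dy)=ev^{e-1}\mathrm dv$, and the horizontal term $R_{\Pi,h}$ is simply what remains of $R_\Pi$.

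You instead compute the vanishing order of $d\Pi|_{T_{\cF}}\colon T_{\cF}\to\Pi^*T_{\cG}$ directly. The correction $(e-1)D$ then appears along the non-invariant ramification divisors, and nothing happens along the invariant ones. Your version is more self-contained: it needs neither classical Riemann--Hurwitz nor the identities $K=K_{\cF}+N^*_{\cF}$ on a normal surface. The paper's version yields the conormal formula as a by-product.

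Two small points on precision.
\begin{itemize}
\item The step that turns your codimension-one check into an identity of divisor classes is the global, generically injective map $T_{\cF}\to\Pi^{[*]}T_{\cG}$ induced by $d\Pi$, defined off finitely many points. Your local computations identify its zero divisor with $R_{\Pi,h}$. Since $K_{\cF}$ is only a divisor class, you should say this explicitly; the paper leaves the analogous map on conormal sheaves implicit too.
\item $\Pi$ is étale off the ramification divisor, not off the preimage of the branch curves. This is harmless, because prime divisors over the branch locus with $e_D=1$ are covered by your normal form with $e=1$.
\end{itemize}
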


\begin{proof}
The usual Riemann--Hurwitz formula and the identities
\[
  K_X=\Pi^*K_Y+R_\Pi,
  \qquad
  K_Y=K_{\cG}+N_{\cG}^*,
  \qquad
  K_X=K_{\cF}+N_{\cF}^*
\]
show that it suffices to prove
\[
  N_{\cF}^*
  =
  \Pi^*N_{\cG}^*+R_{\Pi,v}.
\]

Since this is an equality of rank-one reflexive sheaves, it is enough
to verify it in codimension one. Let $D\subset X$ be a ramification
divisor, let $p\in D$ be a general point, and set $q=\Pi(p)$. Denote
by $e$ the ramification index of $\Pi$ along $D$.

Suppose first that $\Pi(D)$ is non-$\cG$-invariant. There are local
coordinates $(x,y)$ around $q$ and $(u,v)$ around $p$ such that
\[
  \cG\colon \mathrm{d}y=0,
  \qquad
  \Pi(u,v)=(u^e,v).
\]
The conormal sheaves $N_{\cG}^*$ and $N_{\cF}^*$ are generated by
$\mathrm{d}y$ and $\mathrm{d}v$, respectively. Since
\[
  \Pi^*(\mathrm{d}y)=\mathrm{d}v,
\]
there is no correction along $D$.

Suppose next that $\Pi(D)$ is $\cG$-invariant. We may choose local
coordinates such that
\[
  \cG\colon \mathrm{d}y=0,
  \qquad
  \Pi(u,v)=(u,v^e).
\]
In this case,
\[
  \Pi^*(\mathrm{d}y)=e v^{e-1}\mathrm{d}v,
\]
whereas the saturated conormal sheaf $N_{\cF}^*$ is generated by
$\mathrm{d}v$. Hence
\[
  N_{\cF}^*
  =
  \Pi^*N_{\cG}^*+(e-1)D
\]
along $D$.

Therefore $N_{\cF}^*=\Pi^*N_{\cG}^*+R_{\Pi,v}$, and the assertion follows.
\end{proof}

Write the reduced branch divisor of \(\Pi\) as
\[
  B=B_h+B_v,
\]
where $B_h$ and $B_v$ are the unions of the non-\(\cG\)-invariant
and $\cG$-invariant components, respectively.  

If $\Pi$ is Galois, then 
\[
R_{\Pi}=\Pi^*\Delta_{\Pi},\qquad R_{\Pi,h}=\Pi^*\Delta_{\Pi,h},
\]
where 
\[
\Delta_{\Pi}
  :=
  \sum_{B_i\subset B}
  \left(1-\frac1{n_i}\right)B_i,\qquad 
  \Delta_{\Pi,h}
  :=
  \sum_{B_i\subseteq B_h}
  \left(1-\frac1{n_i}\right)B_i.
\]
Proposition~\ref{prop:foliated-Riemann-Hurwitz} gives
\begin{equation}
\label{eq:quotient-canonical-pullback}
  K_{\cF}=\Pi^*(K_{\cG}+\Delta_{\Pi,h}).
\end{equation}

Finally, the volume of a divisor scales by the degree of a finite
map.  Hence \eqref{eq:quotient-canonical-pullback} yields
\begin{equation}
\label{eq:finite-cover-volume-scaling}
  \vol(K_{\cF})
  =
  (\deg\Pi)\,
  \vol(K_{\cG}+\Delta_{\Pi,h}).
\end{equation}
In particular, if \(\Pi\) is the quotient by a finite group \(G\),
then \(\deg\Pi=|G|\).  Thus the quotient problem is reduced to a
lower bound for the volume of the branch pair downstairs.

\subsection{Tangency-free pullbacks}

We next arrange that the branch pair downstairs is sufficiently
regular.  The following local statement is the point at which the
tangency-free hypothesis enters.

\begin{proposition}[Canonicality of tangency-free pullbacks]
\label{prop:finite-cover-canonical-pullback}
Let $Y$ be a smooth surface, let $X$ be a normal surface, and let
\[
  \Pi\colon(X,\cF)\longrightarrow(Y,\cG)
\]
be a finite surjective morphism.  Assume that $\cG$ is reduced and
that $\cF$ is its saturated pullback.  Write the reduced branch
divisor as $B=B_h+B_v$, according as its components are
non-$\cG$-invariant or $\cG$-invariant.  If
\[
  \tang(\cG,B_h)=0,
\]
then $X$ has at worst Hirzebruch--Jung singularities and $\cF$
has canonical singularities.

More precisely, let $\rho\colon\widetilde X\to X$ be the minimal
resolution, and let $\mathscr P_{\mathrm{mix}}$ be the set of
singular points of $X$ lying over $B_h\cap B_v$.  For
$p\in\mathscr P_{\mathrm{mix}}$, order the exceptional string from
the horizontal branch to the invariant branch and denote it by
\[
  \Theta_p=E_1+\cdots+E_r.
\]
Then
\begin{equation}
\label{eq:mixed-cover-discrepancy}
  K_{\widetilde\cF}
  =
  \rho^*K_{\cF}
  +
  \sum_{p\in\mathscr P_{\mathrm{mix}}}M(\Theta_p),
\end{equation}
In particular, every discrepancy is nonnegative.
\end{proposition}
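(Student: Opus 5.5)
The argument is local on $Y$, so I would fix $q\in Y$ and a point $p\in\Pi^{-1}(q)$ and sort the cases by how $q$ sits relative to $B$. Away from $B$, $\Pi$ is étale. The map then identifies $\cF$ near $p$ with $\cG$ near $q$, so $X$ is smooth at $p$ and $\cF$ is reduced there, hence canonical. Note also that $\tang(\cG,B_h)=0$ forces the following: the components of $B_h$ are smooth and pairwise disjoint, they are transverse to $\cG$, and they avoid $\Sing(\cG)$. Consequently, near any $q\in B_h$ we can choose coordinates $(x,y)$ with $\cG\colon dy=0$ and $B_h=\{x=0\}$. The only possible further branch components through $q$ are $\cG$-invariant ones. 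Since $\cG$ is regular at $q$, the only invariant curve through $q$ is the leaf $\{y=0\}$. So locally $B\subseteq\{xy=0\}$, a normal crossing divisor.

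\emph{Step 1 (local structure of $X$).} Near any point $q$, the branch locus is contained in a normal crossing divisor, namely the cases: $q\notin B_h$ (then $B=B_v$ near $q$, with $B_v$ arbitrary) or $q\in B_h$ as above. In the second case the local fundamental group of $\{xy\neq0\}$ is abelian. Normality of $X$ therefore makes the germ of $X$ over $q$ a quotient of $\mathbb C^2$ by a finite abelian group acting diagonally, i.e.\ a cyclic quotient (Hirzebruch--Jung) singularity. For $q\notin B_h$ I would reduce to the following form: $\Pi$ factors locally as a cover branched only along invariant curves. To see that this cover is controlled, I would use the foliated Riemann--Hurwitz Proposition~\ref{prop:foliated-Riemann-Hurwitz}, which gives $K_{\cF}=\Pi^*K_{\cG}$ near $p$, so the cover is foliated-crepant. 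The foliation germ there is then a finite quotient situation, and canonicity descends and ascends along crepant finite maps with abelian (étale in codimension one after removing invariant branching) structure. I expect this case, $q\notin B_h$ with $B_v$ possibly singular through a reduced singularity of $\cG$, to be the main obstacle. The Hirzebruch--Jung claim for $X$ there would need $B_v$ near $q$ to be normal crossing. That should hold because reduced singularities have at most two separatrices, smooth and transverse, so the invariant branch curves through $q$ lie in $\{xy=0\}$ in suitable analytic coordinates. I would invoke this separatrix structure, then conclude abelian local fundamental group as before. Canonicity I would get from the standard fact that finite covers branched only along invariant curves preserve canonical singularities, using the crepant identity.

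\emph{Step 2 (discrepancy computation).} Away from $\mathscr P_{\mathrm{mix}}$, either $X$ is smooth, or the branching is purely vertical (handled by crepancy in Step 1), or it is purely horizontal along $x=0$ with $\cG$ regular. In the purely horizontal case the cover $(u,v)\mapsto(u^e,v)$ is smooth and $\cF$ is regular. So only mixed points $p\in\mathscr P_{\mathrm{mix}}$ contribute. There $X$ is $\mathbb C^2/\mu_n$ with the foliation given by $dv=0$ upstairs, descending the lines $v=\mathrm{const}$. I would compute on the toric minimal resolution. The strict transform of $\{v=0\}$ and each $E_i$ are invariant, and the strict transform of $\{u=0\}$ is transverse; the string is ordered from the horizontal branch to the invariant branch. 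Toric computations give $K_{\widetilde\cF}\cdot E_1=-1$, since $E_1$ meets the transverse curve and has one singularity. They also give $K_{\widetilde\cF}\cdot E_i=0$ for $i\ge2$, with all singularities non-degenerate, so $\Theta_p$ is an $\cF$-chain. Since $\rho^*K_{\cF}\cdot E_i=0$, the divisor $K_{\widetilde\cF}-\rho^*K_{\cF}$ is the unique divisor on $\Theta_p$ with these intersection numbers. That divisor is $M(\Theta_p)$ by \eqref{eq:MTheta-coefficients}, which gives \eqref{eq:mixed-cover-discrepancy}, and its coefficients are positive.
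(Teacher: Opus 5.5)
Your overall route is the paper's. You use $\tang(\cG,B_h)=0$ and the separatrix structure of reduced singularities to make $B$ normal crossing. You then realize each germ of $X$ as an abelian (Kummer) quotient to get Hirzebruch--Jung singularities. At mixed points you identify $K_{\widetilde\cF}-\rho^*K_{\cF}$ with $M(\Theta_p)$ from the intersection numbers $-1,0,\dots,0$ on the string. Those parts are fine. In the mixed case you should say the corners are \emph{reduced} non-degenerate singularities; either your toric computation or the paper's first-integral computation, with linear part $\alpha y_1\,dx_1+\beta x_1\,dy_1$ and $\alpha,\beta>0$, supplies this.

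The gap is at the remaining singular points of $X$. These lie over a crossing $q$ of two $\cG$-invariant branch components, i.e.\ over a reduced singularity of $\cG$. Formula \eqref{eq:mixed-cover-discrepancy} asserts that $\rho$ is foliated crepant there, with no correction term. In Step~2 you dispose of these points as ``handled by crepancy in Step 1''. But Step~1 only gives $K_{\cF}=\Pi^*K_{\cG}$ near $p$, which concerns the finite map $\Pi$, not the resolution $\rho$. The canonicity you import from the ``standard fact'' about covers with invariant branching (essentially the McQuillan--Panazzolo result quoted in the paper's remark) only yields $K_{\widetilde\cF}=\rho^*K_{\cF}+A$ with $A\geq0$, not $A=0$. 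So as written you prove canonicity but not the displayed formula. The vanishing of $A$ at these points is part of the statement, and it is invoked later: Proposition~\ref{prop:kappa-zero-index-bound} applies the proposition with $B_h=0$ to conclude that $\tau$ is foliated crepant.

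To close the gap, argue as the paper does. The Kummer pullback $x=u^r$, $y=v^s$ of the reduced germ of $\cG$ at $q$ is again reduced. Hence on the minimal resolution $S_x-E_1-\cdots-E_r-S_y$ every $E_j$ is $\widetilde\cF$-invariant and carries only the two reduced corner singularities. Then $\mathrm Z(\widetilde\cF,E_j)=2$, so $K_{\widetilde\cF}\cdot E_j=0=\rho^*K_{\cF}\cdot E_j$, and negative definiteness of the string forces $A=0$. Alternatively, use the finite-map relation $a(E,\cF)=r_E\,a(F,\cG)$ for invariant exceptional divisors, valid because $K_{\cF}=\Pi^*K_{\cG}$ there. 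Combine it with the fact that the monomial valuations in separatrix coordinates at a reduced singularity arise from successive blow-ups of corners, which are singular points, and therefore have discrepancy zero.
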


\begin{proof}
We work locally in the complex analytic category.  The condition
$\tang(\mathcal G,B_h)=0$
implies that $\mathcal G$ is regular and transverse along $B_h$, and
that the components of $B_h$ are smooth and pairwise disjoint. 
Together with the local structure of the invariant
branches of a reduced foliation, this shows that $B_h+B_v$ is a
normal crossing divisor.

Fix $p\in X$, and set $q=\Pi(p)$.  
 If $q\notin B$, then $\Pi$ is étale
at $p$, and hence $\mathcal F$ is reduced at $p$.  We may therefore
assume that $q\in B$.
Choose local coordinates $(x,y)$ at $q$ such that the components of
$B$ through $q$ are among $(x=0)$ and $(y=0)$.  Choose sufficiently
divisible positive integers $r$ and $s$ such that the Kummer cover
\[
  \tau\colon U\longrightarrow Y,
  \qquad
  x=u^r,\quad y=v^s,
\]
dominates $\Pi$.  If only one boundary component passes through $q$,
we take the exponent corresponding to the other coordinate to be
$1$.  Thus, after choosing the component lying above $p$, we obtain a
factorization
\[
  U\xrightarrow{\theta}X\xrightarrow{\Pi}Y,
  \qquad
  \tau=\Pi\circ\theta.
\]
The cover $\tau$ is Galois with group
$\Gamma=\mu_r\times\mu_s$.
Since it factors through the normal surface $X$, locally
$X\simeq U/H$
for some subgroup $H\subseteq\Gamma$.
By \cite[Theorem~III.5.2]{BHPV04}, the germ $(X,p)$ is either smooth or
a Hirzebruch--Jung singularity.

There are three local configurations.

\smallskip
\noindent
\emph{Non-invariant branch.} Since the components of $B_h$ are
pairwise disjoint, there is only one such component.  
We may write
\[
  \cG\colon\mathrm dy=0,
  \qquad
  B_h=(x=0).
\]
Here $s=1$, and the saturated pullback foliation on $U$ is defined by
$\mathrm{d}v=0$.  It descends to a regular foliation on $X$, since the
action of $H\subseteq\mu_r\times\{1\}$ is trivial in the $v$-direction.
Thus $\mathcal F$ is regular at $p$.

\smallskip
\noindent
\emph{Invariant branch.}
The pullback of the reduced local normal form of \(\cG\) by the
corresponding Kummer map is again reduced. 
If $q$ lies on a unique component of $B_v$, then
$H\subseteq\{1\}\times\mu_s$ acts only on the $v$-coordinate.
Hence $X\simeq U/H$ is smooth and $\cF$ is reduced at $p$.
It remains to consider a crossing of two invariant branches.
The marked minimal resolution has the form
\[
  S_x-E_1-\cdots-E_r-S_y.
\]
Every \(E_j\) is \(\widetilde\cF\)-invariant, and the only
singularities on it are the two reduced corner singularities.  Hence
\[
  \mathrm Z(\widetilde\cF,E_j)=2,
  \qquad
  K_{\widetilde\cF}\cdot E_j=0.
\]
Negative definiteness of the exceptional string then gives
\[
  K_{\widetilde\cF}=\rho^*K_{\cF}
\]
over such a point.

\smallskip
\noindent
\emph{Mixed branch.}
We may choose coordinates so that
\[
  \cG\colon\mathrm dy=0,
  \qquad
  B_h=(x=0),
  \qquad
  B_v=(y=0).
\]
The Kummer pullback is regular and is defined by $\mathrm dv=0$.
Write the marked resolution as
\[
  T-E_1-\cdots-E_r-S,
\]
where $T$ and $S$ are the strict transforms of the non-invariant
and invariant branches.  The function $y\circ\Pi$ is a local first
integral of $\cF$, and
\[
  \operatorname{div}\bigl(\rho^*(y\circ\Pi)\bigr)
  =c_0S+\sum_{j=1}^r c_jE_j,
  \qquad
  c_0,c_j>0.
\]
At a corner of this divisor, it has the form
\[
  x_1^\alpha y_1^\beta\varepsilon,
  \qquad
  \alpha,\beta>0,
  \qquad
  \varepsilon(0,0)\neq0.
\]
The saturated differential has linear part
\[
  \alpha y_1\,\mathrm dx_1+
  \beta x_1\,\mathrm dy_1,
\]
and hence defines a reduced non-degenerate singularity.  Ordering the
string from $T$ to $S$, we obtain
\[
  \mathrm Z(\widetilde\cF,E_1)=1,
  \qquad
  \mathrm Z(\widetilde\cF,E_j)=2
  \quad(j\geq2).
\]
Therefore \(\Theta=E_1+\cdots+E_r\) is a
\(\widetilde\cF\)-chain.  If
\[
  K_{\widetilde\cF}=\rho^*K_{\cF}+A,
\]
then
\[
  A\cdot E_1=-1,
  \qquad
  A\cdot E_j=0\quad(j\geq2).
\]
By the defining property of \(M(\Theta)\) and the negative
definiteness of the string, \(A=M(\Theta)\).  This proves
\eqref{eq:mixed-cover-discrepancy} and completes the proof.
\end{proof}

\begin{remark}
In the notation of the proof, the pullback foliation $\mathcal H$ on
the Kummer cover $U$ is reduced, hence canonical, and
\[
  (X,\mathcal F)\simeq (U,\mathcal H)/H.
\]
The canonicality of $\mathcal F$ therefore also follows from the
general result of McQuillan and Panazzolo that a finite quotient of a
canonical foliation is canonical; see
\cite[Corollary~III.i.5]{MP13}.

The condition $\tang(\cG,B_h)=0$ is stronger than the sharp local
condition for a fixed ramification index; see Example~\ref{exa:tang=0-notsharp}, but it is uniform in the
cover and is exactly the condition needed below.
\end{remark}

\begin{example}\label{exa:tang=0-notsharp}
Let $p$ be a regular point of $\cG$ lying on
a smooth non-$\cG$-invariant branch component, and put
$t=\mathrm{tang}(\cG,B_h,p)$.
In suitable local coordinates, the foliation, the branch component,
and a cyclic cover of local ramification index $e$ are given by
\[
  \cG\colon\mathrm{d}x=0,\qquad
  B_h=(x+y^{t+1}=0),\qquad
  x=u^e-v^{t+1},\quad y=v.
\]
The pullback foliation is defined by
\[
  e u^{e-1}\,\mathrm{d}u
  -(t+1)v^t\,\mathrm{d}v.
\]
It is regular when $t=0$.  If $t=1$ and $e=2$, its linear part is
\[
  2u\,\mathrm{d}u-2v\,\mathrm{d}v,
\]
which defines a reduced non-degenerate singularity with eigenvalue
quotient $-1$.  In all the remaining cases with $t>0$, the pullback
singularity is not reduced.  Thus the sharp local condition is
\[
  t\leq1\quad\text{if }e=2,
  \qquad
  t=0\quad\text{if }e\geq3.
\]
If $p\in\operatorname{Sing}(\cG)\cap B_h$, the corresponding local
calculation shows that the pullback singularity is not reduced.
\end{example}

\subsection{A tangency-free model of a finite cover}

\begin{proposition}[Tangency-free model of a finite cover]
\label{prop:tangency-free-quotient-model}
Let
\[
  \Pi\colon(X,\cF)\longrightarrow(Y,\cG)
\]
be a finite surjective morphism of normal foliated projective
surfaces, with \(\cF\) the saturated pullback of \(\cG\).  There is
a commutative diagram
\[
\begin{tikzcd}[column sep=large,row sep=large]
(\widetilde X,\widetilde\cF)
  \arrow[r,"\rho"]
  \arrow[dr,"\widetilde\Pi"']
&
(X',\cF')
  \arrow[r,"\nu"]
  \arrow[d,"\Pi'"]
&
(X,\cF)
  \arrow[d,"\Pi"]
\\
&
(Y',\cG')
  \arrow[r,"\sigma"']
&
(Y,\cG),
\end{tikzcd}
\]
with the following properties:
\begin{enumerate}
\item $Y'$ and $\widetilde X$ are smooth, while $\cG'$ and
      $\widetilde\cF$ are reduced;
\item $X'$ is the normalization of 
      $X\times_Y Y'$, the surface $X'$ has at worst
      Hirzebruch--Jung singularities, and $\cF'$ is canonical;
\item $\rho$ is the minimal resolution of $X'$;
\item if $B_h'$ is the non-invariant part of the reduced branch
      divisor of $\Pi'$, then
      $\tang(\cG',B_h')=0$.
\end{enumerate}
Write \(B'=B_h'+B_v'\) for the reduced branch divisor of \(\Pi'\),
and let \(\mathscr P'_{\mathrm{mix}}\) be the set of singular points
of \(X'\) lying over \(B_h'\cap B_v'\).  Then
\begin{align}
  K_{\cF'}
  &=
  (\Pi')^*K_{\cG'}+R_{\Pi',h},
  \label{eq:finite-cover-package-RH}
  \\
  K_{\widetilde\cF}
  &=
  \rho^*K_{\cF'}+E,
  \qquad E=
  \sum_{p\in\mathscr P'_{\mathrm{mix}}}M(\Theta_p)\geq0.
  \label{eq:quotient-package-discrepancy}
\end{align}
Moreover,
\begin{equation}
\label{eq:finite-cover-package-volume}
  \vol(K_{\widetilde\cF})=\vol(K_{\cF'}).
\end{equation}
\end{proposition}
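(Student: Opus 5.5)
We construct $\sigma\colon Y'\to Y$ first and then pull $\Pi$ back. Take $\sigma_0$ a resolution of $Y$, and by Seidenberg's theorem blow up further so that the induced foliation is reduced. The reduced branch divisor of the induced cover is the union of the strict transform of the branch locus of $\Pi$ with possibly some exceptional curves. Its non-invariant part $B_h$ may still have positive tangency with the foliation. We then blow up points of $B_h$ where $\tang(\cG',B_h',p)>0$, namely singular points of the foliation on $B_h$, tangency points, singular points of $B_h$, and intersections of distinct components.

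Formula \eqref{eq:tangency-drop} shows each such blow-up strictly decreases $\tang$ of the strict transform of $B_h$, since $m_p\geq1$ and either $m_p\geq2$ or $\ell_p=1$ or the point is a tangency, which drops after finitely many blow-ups. Blowing up reduced singular or regular points preserves reducedness. The main obstacle is that a new exceptional curve $E$ may itself become a horizontal branch component of the base-changed cover, contributing to $B_h'$. Such $E$ is non-invariant exactly when $p$ is a regular point, and then blowing up a regular point gives an invariant or transverse $E$. My plan is to perform the blow-ups only at points of $\Sing(\cG)\cup\Sing(B_h)\cup$ tangency points. If $p$ is a regular point, the foliation is locally $\mathrm dy=0$ and $E$ is non-invariant. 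I then verify, by local computation after finitely many further blow-ups, that any new horizontal exceptional branch components are transverse and disjoint from the rest. Alternatively, I will use an embedded resolution making $B_h$ snc and note that a total tangency measure combining $\tang$ and intersection counts decreases. I expect this termination and control to be the hardest bookkeeping step. Once it is done, (1) and (4) hold for $(Y',\cG')$, with $B'$ the branch divisor of the normalized base change, which lies in $\sigma^{-1}(B)\cup\Exc(\sigma)$.

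Next let $X'$ be the normalization of $X\times_YY'$, with $\Pi'\colon X'\to Y'$ finite and $\cF'$ the saturated pullback of $\cG'$. Then Proposition~\ref{prop:finite-cover-canonical-pullback} applies because $Y'$ is smooth, $\cG'$ is reduced, and $\tang(\cG',B_h')=0$. It gives that $X'$ has only Hirzebruch--Jung singularities and $\cF'$ is canonical, which proves (2). Taking $\rho$ the minimal resolution gives (3), and \eqref{eq:mixed-cover-discrepancy} gives \eqref{eq:quotient-package-discrepancy}. Moreover, $\widetilde\cF$ is reduced: at the corners of the mixed strings the proof of Proposition~\ref{prop:finite-cover-canonical-pullback} shows reduced non-degenerate singularities, and the invariant crossings and smooth points were handled there as well. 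Hence (1) holds for $\widetilde X$. Since $X'$ is normal with quotient singularities, $K_{\cG'}$ is Cartier on smooth $Y'$, and Proposition~\ref{prop:foliated-Riemann-Hurwitz} gives \eqref{eq:finite-cover-package-RH}.

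Finally, \eqref{eq:finite-cover-package-volume} follows from \eqref{eq:quotient-package-discrepancy}. Here $E\geq0$ is $\rho$-exceptional, and on a normal surface with Mumford pullback $h^0(m(\rho^*K_{\cF'}+E))=h^0(mK_{\cF'})$ for $\mathbb Q$-divisors, by pushing forward sections. Hence $\vol(K_{\widetilde\cF})=\vol(\rho^*K_{\cF'})=\vol(K_{\cF'})$.
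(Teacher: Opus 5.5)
Your overall plan matches the paper's proof: resolve $Y$ and reduce $\cG$, remove the tangency of the horizontal branch divisor by blow-ups, normalize the base change, apply Proposition~\ref{prop:finite-cover-canonical-pullback}, take the minimal resolution, then use Riemann--Hurwitz and the pushforward argument for volumes. Your last two paragraphs are fine. The gap is in the construction of $Y'$, i.e.\ in property (4).

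\textbf{The false premise.} You assert that blowing up a regular point of the foliation (locally $\mathrm dy=0$) produces a non-invariant exceptional curve, and your ``main obstacle'' rests on this. That is false. In the chart $y=xt$ the pulled-back form is $t\,\mathrm dx+x\,\mathrm dt$, so $E=\{x=0\}$ is invariant. It carries a single reduced singularity, at the point where $E$ meets the strict transform of the leaf $\{y=0\}$. More generally, the blow-up of a regular or reduced singular point is never dicritical: a non-invariant exceptional curve forces the first nonzero jet of the vector field to be a multiple of the radial field. That cannot happen at a regular point, nor at a reduced singularity, whose eigenvalue ratio is not $1$. This is exactly what underlies the formula $K_{\cF_1}=\sigma^*K_{\cF}+(1-\ell_p)E$ used throughout the paper.

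\textbf{The missing idea.} Once $\cG_1$ is reduced, every exceptional curve created while removing tangency is $\cG'$-invariant. It can therefore only enter $B_v'$, and the horizontal branch divisor of the normalized base change is precisely the strict transform of $B_{1,h}$. With this, (4) follows directly from \eqref{eq:tangency-drop}, with the two-step drop at a simple tangency point as in Proposition~\ref{prop:simple-regular-clusters}.

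\textbf{Why the proposal does not prove (4) as written.} The ``local computation after finitely many further blow-ups'' and the alternative ``total tangency measure'' are announced but never carried out. Worse, under your own premise the procedure could not terminate. Every blow-up centre lies on $B_h$, so a non-invariant exceptional curve belonging to $B_h'$ would meet the strict transform of $B_h$. Since $\tang$ of a reduced curve counts intersections of distinct components, each such blow-up would create new positive tangency. Replace that paragraph with the non-dicriticity observation and the rest of your argument goes through.
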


\begin{proof}
Choose a resolution
\[
  \sigma_1\colon(Y_1,\cG_1)\longrightarrow(Y,\cG)
\]
such that \(Y_1\) is smooth and \(\cG_1\) is reduced, and let
\(X_1\) be the normalization of 
\(X\times_Y Y_1\).  Write
\[
  B_1=B_{1,h}+B_{1,v}
\]
for the reduced branch divisor of \(X_1\to Y_1\).

By successively blowing up the points of positive tangency of $B_{1,h}$ with the transformed foliation,
we obtain
\[
  \sigma_2\colon(Y',\cG')\longrightarrow(Y_1,\cG_1)
\]
such that
\[
  \tang(\cG',B_h')=0.
\]
Every \(\sigma_2\)-exceptional curve is \(\cG'\)-invariant.  Hence
the horizontal branch divisor of the normalized base change
\(\Pi'\colon X'\to Y'\) is precisely the strict transform \(B_h'\).
Proposition~\ref{prop:finite-cover-canonical-pullback} now shows that
\(X'\) has at worst Hirzebruch--Jung singularities and that \(\cF'\)
is canonical.  Taking the minimal resolution \(\rho\) gives the
diagram and properties (1)--(4).

Equation~\eqref{eq:finite-cover-package-RH} is
Proposition~\ref{prop:foliated-Riemann-Hurwitz}, and
\eqref{eq:quotient-package-discrepancy} is the local discrepancy
formula \eqref{eq:mixed-cover-discrepancy}.  Since adding an effective
exceptional divisor does not change volume, we have
$\vol(K_{\widetilde\cF})=\vol(K_{\cF'})$,
which proves \eqref{eq:finite-cover-package-volume}.
\end{proof}

\begin{corollary}[Galois covers]
\label{cor:Galois-cover}

In the notation of
Proposition~\ref{prop:tangency-free-quotient-model}, assume that
$\Pi$ is Galois with Galois group $G$. Let $B_i'$ be the irreducible
components of $B_h'$, and let $n_i$ be the ramification index over
$B_i'$. Set
\[
  \Delta_{\Pi',h}
  :=
  \sum_{B_i'\subseteq B_h'}
  \left(1-\frac1{n_i}\right)B_i'.
\]
Then
\[
  K_{\cF'}
  =
  (\Pi')^*(K_{\cG'}+\Delta_{\Pi',h})
\]
and
\[
  \vol(K_{\widetilde\cF})
  =
  |G|\,\vol(K_{\cG'}+\Delta_{\Pi',h}).
\]
\end{corollary}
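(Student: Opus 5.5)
The plan is to compose \eqref{eq:finite-cover-volume-scaling} with the constructions of Proposition~\ref{prop:tangency-free-quotient-model}, applied to $\Pi'$ instead of $\Pi$. First, since $\Pi$ is Galois with group $G$ and $X'$ is the normalization of $X\times_YY'$, the cover $\Pi'\colon X'\to Y'$ is again Galois with group $G$: $G$ acts on $X\times_YY'$ through the first factor, this action lifts to the normalization, and $\deg\Pi'=\deg\Pi=|G|$. (If the fibre product has several components one restricts to the component dominating $Y'$, which is birational to $X$, so the degree is unchanged.) Consequently, over each irreducible branch component $B_i'$ the ramification index $n_i$ is constant along all prime divisors of $X'$ lying over it, and $(\Pi')^*B_i'=n_i\sum_{D\mapsto B_i'}D$.

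Second, I would apply Proposition~\ref{prop:foliated-Riemann-Hurwitz} to $\Pi'$. This is legitimate because $Y'$ is smooth, so $K_{\cG'}$ is Cartier; the resulting identity is \eqref{eq:finite-cover-package-RH}. The Galois property then gives the componentwise identity $(n_i-1)\sum_{D\mapsto B_i'}D=(\Pi')^*\bigl((1-1/n_i)B_i'\bigr)$. Summing over the non-invariant components yields $R_{\Pi',h}=(\Pi')^*\Delta_{\Pi',h}$, and therefore $K_{\cF'}=(\Pi')^*(K_{\cG'}+\Delta_{\Pi',h})$. This is the first assertion.

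Third, for the volume, I would pull back along the finite map and use that $h^0$ of a pullback of a $\mathbb Q$-divisor under a finite morphism of degree $d$ between normal projective surfaces scales the volume by $d$. Concretely, $\vol(\Pi'^*D)=(\Pi'^*P(D))^2=d\,P(D)^2$, because pullback of the Zariski decomposition of $D$ remains a Zariski decomposition after passing to the minimal resolution, or alternatively by the standard finite-map volume formula. This gives $\vol(K_{\cF'})=|G|\,\vol(K_{\cG'}+\Delta_{\Pi',h})$. Combining with \eqref{eq:finite-cover-package-volume}, namely $\vol(K_{\widetilde\cF})=\vol(K_{\cF'})$, finishes the proof.

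The only delicate point is the first step: checking that the normalized base change is still Galois with group $G$, so that ramification indices over each $B_i'$ are uniform and the boundary $\Delta_{\Pi',h}$ is well defined. I would verify this by noting that the function field of $X'$ is $k(X)\otimes_{k(Y)}k(Y')\cong k(X)$, because $Y'\to Y$ is birational, so $k(X')/k(Y')$ is the same Galois extension with group $G$. The scaling of volume on the singular surface $X'$ is then routine by passing to $\widetilde X$.
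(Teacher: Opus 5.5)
Your proposal is correct and follows essentially the same route as the paper: the normalized base change $\Pi'$ is again Galois with group $G$, so ramification indices over each $B_i'$ are uniform and $R_{\Pi',h}=(\Pi')^*\Delta_{\Pi',h}$. The first identity then comes from the foliated Riemann--Hurwitz formula, and the volume identity from finite-degree scaling of volume combined with $\vol(K_{\widetilde\cF})=\vol(K_{\cF'})$; your function-field argument ($k(X')=k(X)$ over $k(Y')=k(Y)$) and your justification of the scaling just spell out steps the paper asserts without detail.
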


\begin{proof}
The normalized base change $\Pi'$ is again Galois with Galois group
$G$. Hence all ramification divisors over $B_i'$ have the same
ramification index $n_i$, and therefore
\[
  R_{\Pi',h}=(\Pi')^*\Delta_{\Pi',h}.
\]
The first equality follows from the foliated Riemann--Hurwitz formula.
The second follows from \eqref{eq:finite-cover-package-volume}
and the finite-degree scaling of volume.
\end{proof}

We conclude this section by relating the preceding construction to the birational
quotient used in the main theorem. 

Let $\cF$ be a canonical foliation on a smooth projective surface
$X$, and let
\[
  G\subseteq\operatorname{Bir}(X,\cF)
\]
be a finite subgroup, and let
\[
  q\colon(X,\cF)\dashrightarrow
  (X,\cF)/G=(Y,\cG)
\]
be the rational quotient map.

Resolve the indeterminacy of $q$ by a sequence of blow-ups
$\mu\colon(\widehat X,\widehat\cF)\to(X,\cF)$.
The resulting morphism
$\widehat q:=q\circ\mu
  \colon(\widehat X,\widehat\cF)\to(Y,\cG)$
admits a Stein factorization
\[
  (\widehat X,\widehat\cF)
  \xrightarrow{\eta}
  (X_0,\cF_0)
  \xrightarrow{q_0}
  (Y,\cG).
\]
Thus we have a commutative diagram
\[
\begin{tikzcd}[column sep=large,row sep=large]
(\widehat X,\widehat\cF)
  \arrow[r,"\mu"]
  \arrow[d,"\eta"']
&
(X,\cF)
  \arrow[d,dashed,"q"]
\\
(X_0,\cF_0)
  \arrow[r,"q_0"']
&
(X,\cF)/G=(Y,\cG),
\end{tikzcd}
\]
where $\eta$ is birational and $q_0$ is a finite Galois morphism with
Galois group $G$. In particular, $\deg q_0=\deg q=|G|$.

Applying Corollary~\ref{cor:Galois-cover} to $q_0$, we obtain a smooth
tangency-free quotient pair $(\bar Y,\bar\cG,\bar\Delta)$ and a smooth
foliated surface $(\widetilde X,\widetilde\cF)$ birational to
$(X,\cF)$ such that
\[
  \bar\Delta
  =
  \sum_i\left(1-\frac1{n_i}\right)B_i,
  \qquad
  \tang(\bar\cG,\bar\Delta_{\red})=0.
\]
By the birational invariance of the canonical volume,
\[
  \vol(K_{\cF})=\vol(K_{\widetilde\cF})
  =
  |G|\,\vol(K_{\bar\cG}+\bar\Delta).
\]

In the next section, we pass from the tangency-free quotient pair
$(\bar Y,\bar\cG,\bar\Delta)$ to a relatively minimal model and
recover its adjoint volume through local cluster corrections.

\section{The cluster formula for adjoint volumes}
\label{sec:cluster-formula}

This section establishes the birational decomposition of adjoint
volume used in the proof of
Theorem~\ref{thm:adjoint-volume-bounds}.

Let $(X,\cF,\Delta)$ be a smooth tangency-free foliated triple such
that $K_{\cF}+\Delta$ is big.  Choose a relatively minimal reduction
\[
  \pi\colon(X,\cF)\longrightarrow(X_0,\cF_0)
\]
and set $\Delta_0:=\pi_*\Delta$.  Since every $\pi$-exceptional curve
is $\cF$-invariant, while $\Delta$ has no $\cF$-invariant components,
no component of $\Delta$ is contracted.

Applying the clusters described below to
$(X_0,\cF_0,\Delta_0)$, we obtain a cluster-adapted morphism
\[
  \rho'\colon
  (X',\cF',\Delta')
  \longrightarrow
  (X_0,\cF_0,\Delta_0),
  \qquad
  \Delta'=(\rho')_*^{-1}\Delta_0,
\]
such that
\[
  \tang(\cF',\Delta'_{\red})=0.
\]
The original and the resulting triples are smooth tangency-free
birational models carrying the same strict-transform boundary.
Hence Lemma~\ref{lem:tangency-free-volume} gives
\[
  \vol(K_{\cF}+\Delta)
  =
  \vol(K_{\cF'}+\Delta').
\]

Replacing the original triple by $(X',\cF',\Delta')$ and dropping
the primes, we henceforth write
\[
  \rho\colon
  (X,\cF,\Delta)
  \longrightarrow
  (X_0,\cF_0,\Delta_0),
  \qquad
  \Delta=\rho_*^{-1}\Delta_0,
\]
for a cluster-adapted resolution with tangency-free output.  Since
\[
  \rho_*(K_{\cF}+\Delta)=K_{\cF_0}+\Delta_0,
\]
the divisor $K_{\cF_0}+\Delta_0$ is big.

Throughout this section, we assume
\[
  \nu(\cF_0)\geq0,
\]
or equivalently that $K_{\cF_0}$ is pseudo-effective.  The initial
triple is irredundant because $(X_0,\cF_0)$ is relatively minimal,
and Proposition~\ref{prop:cluster-preserves-irredundancy} shows that
irredundancy is preserved after every complete cluster.

We shall prove
\[
  \vol(K_{\cF}+\Delta)
  =
  \sV(\cF_0,\Delta_0)
  +
  \sum_{p\in\sS_\rho}T_1(p).
\]
For statements concerning a single cluster, we suppress the stage
index and write $(X,\cF,\Delta)$ for its input and
$(X',\cF',\Delta')$ for its output.  All local quantities are computed
on the input triple.

\subsection{Clusters and cluster-adapted resolutions}
\label{subsec:cluster-adapted-resolutions}
We first record the effect of one blow-up and then group the successive
infinitely near centers into clusters.
Let 
\[
\sigma\colon(X_1,\cF_1,\Delta_1,E)\to(X,\cF,\Delta,p)
\]
be the blow-up of a point $p\in\Supp\Delta$, with exceptional divisor $E$, where 
\[
\Delta_1:=\sigma_*^{-1}\Delta=\sum a_i\sigma_*^{-1}C_i.
\]
Set 
\[
\ell_p:=\ell(p)\in\{0,1\},\qquad m_p:=\mult_p(\Delta_{\red}),\qquad \mu_p:=\mult_p(\Delta)=\sum a_i\mult_p(C_i).
\]
Then 
\begin{equation}\label{eq:one-blowup-adjoint}
K_{\cF_1}+\Delta_1=\sigma^*(K_{\cF}+\Delta)+(1-\ell_p-\mu_p)E.
\end{equation}
\begin{definition}[Cluster]
A \emph{cluster over $p$} is a minimal sequence of blow-ups
\[
\varrho_p:(X',\cF',\Delta')
\longrightarrow
(X,\cF,\Delta),\qquad\Delta'=(\varrho_p)_*^{-1}\Delta
\]
with centers $q_1,\dots,q_{r_p}$, satisfying the following conditions:
\begin{enumerate}
\item 
$q_1=p\in\Supp\Delta$ and for $i\ge2$
\[
q_i\in\Supp\Delta_{i-1}\cap\Sing(\cF_{i-1})\cap\varrho_{p,i-1}^{-1}(p)
\]
where
$\varrho_{p,i}:=\sigma_1\circ\cdots\circ\sigma_i$ and
$\varrho_{p,0}:={\rm id}_X$.
Here $(X_i,\cF_i,\Delta_i)$ denotes the transformed triple after the first $i$ blow-ups;
\item 
\[
\Supp\Delta'\cap\varrho_p^{-1}(p)\cap\Sing(\cF')=\emptyset.
\]
\end{enumerate}
The sequence is required to be minimal with these properties.
The point $p=q_1$ is called the \emph{root} of the cluster.
\end{definition}
Thus every center of a cluster except its root satisfies 
\begin{equation}\label{eq:cluster-later-centers-singular}
\ell(q_i)=1,\qquad (i\ge2).
\end{equation}
This is a numerical reason for grouping the infinitely near centers into a single cluster.
\begin{definition}[Cluster-adapted resolution]
A \emph{cluster-adapted resolution} of
$(X_0,\cF_0,\Delta_0)$ is a composition
\[
  \rho\colon
  (X_r,\cF_r,\Delta_r)
  \longrightarrow
  (X_0,\cF_0,\Delta_0),
  \qquad
  \Delta_r=\rho_*^{-1}\Delta_0,
\]
obtained by applying clusters successively to the current transformed
triple until
\[
  \tang(\cF_r,\Delta_{r,\red})=0.
\]
\end{definition}

After a cluster is completed, we choose a remaining point of positive
tangency, if any, as the root of the next cluster. Since
$\tang(\cF_i,\Delta_{i,\red})$ is a nonnegative integer and every complete
cluster strictly decreases it, the procedure terminates after finitely
many clusters.
We write such a resolution as
\[
  (X_r,\cF_r,\Delta_r)
  \xrightarrow{\varrho_{p_r}}
  \cdots
  \xrightarrow{\varrho_{p_2}}
  (X_1,\cF_1,\Delta_1)
  \xrightarrow{\varrho_{p_1}}
  (X_0,\cF_0,\Delta_0).
\]
In the global notation fixed at the beginning of the section, we identify
$(X_r,\cF_r,\Delta_r)$ with $(X,\cF,\Delta)$.
Here $p_i\in X_{i-1}$ is the root of the corresponding cluster.  We regard
\[
  \sS_\rho:=\{p_1,\ldots,p_r\}
\]
as an indexed set: its elements lie on different intermediate models.  Every
local invariant attached to $p_i$ is computed on
$(X_{i-1},\cF_{i-1},\Delta_{i-1})$.

For $p=p_i\in\sS_\rho$, put
\[
t_p:=\tang(\cF_{i-1},\Delta_{i-1,\red},p),
\qquad
\sS_{\rho,\ell,m}:=\{p\in\sS_{\rho}\mid \ell_p=\ell,\,m_p=m\}.
\]
The roots are divided into
\[
\sS_{\rho,0,1},\qquad\sS_{\rho,0,m}\quad(m\ge2),\qquad\sS_{\rho,1,m}\quad(m\ge1),
\]
which will be called respectively \emph{simple regular},
\emph{multiple regular}, and \emph{singular roots}.
For a simple regular root, the additional condition $t_p>0$ is part of the definition.
For a multiple regular or a singular root, positive local tangency is automatic.
We may and will first perform all clusters rooted at singular points of
the current foliation. Once these clusters have been completed,
the transformed boundary is disjoint from the singular locus.
Consequently, every subsequent cluster has a regular root, that is,
\(\ell_p=0\).

\begin{proposition}[Preservation of irredundancy]
\label{prop:cluster-preserves-irredundancy}
Let
\[
\varrho_p\colon(X',\cF',\Delta')
\longrightarrow(X,\cF,\Delta)
\]
be a complete cluster starting from an irredundant triple such that
\(K_{\cF}\) is pseudo-effective. Then
\((X',\cF',\Delta')\) is irredundant.
\end{proposition}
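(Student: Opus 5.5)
We argue by contradiction and localize any redundant curve over the cluster. Irredundancy has two parts: $(X',\cF')$ is reduced, and no $(\Delta',\cF')$-exceptional curve is redundant. Reducedness is immediate. Every center of the cluster is either a regular point of the current foliation (only the root can be) or a reduced singular point, by \eqref{eq:cluster-later-centers-singular} and the definition of a cluster. Blowing up a regular or reduced singular point yields a reduced foliation.

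Now suppose $E'\subset X'$ is a redundant $(\Delta',\cF')$-exceptional curve, with contraction $\sigma'\colon(X',\cF',\Delta')\to(X'',\cF'',\Delta'')$. Let $\varrho_p$ have exceptional locus $\varrho_p^{-1}(p)$.

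\emph{Case 1: $E'$ is not contained in $\varrho_p^{-1}(p)$.} Then $E'$ is the strict transform of an $\cF$-invariant curve $E\subset X$.
\begin{itemize}
\item If $E$ does not pass through any center, the cluster and the contraction of $E$ are performed over disjoint loci, so they commute. Hence $E$ is already $(\Delta,\cF)$-exceptional and redundant on $X$, contradicting irredundancy.
\item If $E$ passes through a center, then $E'^2<E^2$, so $E^2\ge0$ whenever $E'^2=-1$. This contradicts redundancy in the cases where $E$ was a $(-1)$-curve. The remaining subcase is an invariant curve $E$ with $E^2\geq0$ and $E'^2=-1$. There we must show that $E'$ cannot satisfy $K_{\cF'}\cdot E'\in\{-1,0\}$ with the required small boundary intersection. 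We would use that each blow-up at a point of $E$ changes $K_{\cF}\cdot E$ by $(1-\ell)$, that $\Delta'\cdot E'$ is only reduced at regular centers, and pseudo-effectivity of $K_{\cF}$: an invariant curve with $K_{\cF}\cdot E<0$ and $E^2\ge0$ would be a rational curve covering $X$ and give $K_{\cF}$ non-pseudo-effective (Miyaoka/Brunella).
\end{itemize}

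\emph{Case 2: $E'\subset\varrho_p^{-1}(p)$.} All exceptional curves of $\varrho_p$ are $\cF'$-invariant, and only the last exceptional curve of each branch of the cluster is a $(-1)$-curve. Let $E'$ be the exceptional curve of the last blow-up, at a center $q_i$. There are two subcases.
\begin{itemize}
\item If $q_i$ is the root and regular, then $K_{\cF'}\cdot E'=-1$ and $\Delta'\cdot E'=\mu_p$. When $t_p>0$ or $m_p\ge2$, the minimality of the cluster combined with the positivity of tangency shows that contracting $E'$ recreates a tangency point. We then check that $(K_{\cF'}+\Delta')\cdot E'\geq0$ unless the image lies on a $(\Delta'',\cF'')$-chain. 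Here \eqref{eq:one-blowup-adjoint} gives the coefficient $1-\mu_p$, which is $\le 0$ exactly when $\mu_p\ge1$, i.e.\ $\Delta'\cdot E'\ge1$, so $E'$ is not $(\Delta',\cF')$-exceptional. If $\mu_p<1$, then $m_p=1$ and $t_p>0$. Contracting $E'$ returns the tangency, and the only way the image is regular and the contraction is non-positive is $(X'',\Delta'')=(X,\Delta)$ locally. We must therefore argue that the definition of redundancy refers to contractions to triples without this tangency, or else that $E'$ meets $\Delta'$ non-transversally.
\item If $q_i$ is singular ($\ell=1$), then $K_{\cF'}\cdot E'=0$. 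Moreover $\Delta'\cdot E'=\mu_{q_i}>0$, since $q_i\in\Supp\Delta_{i-1}$, so condition (2) of Definition~\ref{def:(Delta,F)-exc} fails.
\end{itemize}

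\emph{Main obstacle.} The delicate points are the regular simple root with $\mu_p<1$ and the invariant curves through centers in Case~1. These are where we expect to use that redundant contractions preserve tangency (Remark~\ref{rem:redundant-no-saddle-node}), so that contracting $E'$ would land on a triple with the same tangency as $(X,\Delta)$. Via the uniqueness of the blow-down of $E'$, this would identify the contraction with undoing the last blow-up of the cluster. We would then use that the non-positivity condition together with $\Delta\cdot E'<1$ contradicts $t_p>0$: at a tangency point the strict transform $C'$ still meets $E'$ tangentially or at a singular point, which forces $\Delta'\cdot E'\geq$ the relevant amount. We anticipate this local bookkeeping is where the argument needs care.
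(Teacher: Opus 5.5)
Your outline has two genuine gaps, and they sit where you flagged difficulties.

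\textbf{First gap: the simple regular root in Case~2.} You leave this case open, and the repairs you suggest cannot work. Suppose the curve $\bar E_1$ created at such a root survived on $X'$ as a $(-1)$-curve. Then $K_{\cF'}\cdot\bar E_1=-1$ and $\Delta'\cdot\bar E_1=a<1$. Its contraction back to the regular point $p$ is $(K+\Delta)$-non-positive, with discrepancy $1-a>0$. So $\bar E_1$ would genuinely be redundant under the stated definition, and no reinterpretation of that definition rescues the argument.

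The missing idea is that this configuration never arises. Blowing up the regular point $p$ produces an invariant curve $E_1$ carrying exactly one singularity of $\cF_1$: the point corresponding to the tangent direction of the leaf through $p$. Since $m_p=1$ and $t_p>0$, the boundary branch through $p$ is tangent to that leaf, so $\Delta_1$ passes through this singular point. The point therefore lies in $\Supp\Delta_1\cap\Sing(\cF_1)\cap\varrho_p^{-1}(p)$, and condition (2) in the definition of a cluster forces it to be blown up later. Hence $\bar E_1^2\le-2$ on $X'$, and $\bar E_1$ is not even $\cF'$-exceptional. Combined with your singular-center and $\mu_p\ge1$ arguments, this is how the paper settles every component of $\varrho_p^{-1}(p)$.

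\textbf{Second gap: Case~1 when a center lies on $E$.} Here you give only a plan, and your own ingredients reach further than you use them. Each blow-up at a center $q$ raises the intersection of the foliated canonical divisor with the strict transform of $E$ by $(1-\ell_q)\mult_q(E)$. Hence $K_{\cF}\cdot E\le K_{\cF'}\cdot E'\le 0$. Also $E^2\ge(E')^2+1=0$, so $E$ is nef. Pseudo-effectivity of $K_{\cF}$ therefore excludes type~(1), and also any configuration in which the regular root lies on $E$.

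What remains is type~(2) with only singular centers on $E$. There $K_{\cF}\cdot E=0$ and pseudo-effectivity gives nothing, and this case does occur:
\begin{itemize}
\item take the foliation of $\mathbb P^2$ induced by $x\partial_x+\lambda y\partial_y$ with $\lambda\notin\mathbb R$, which is reduced with $K_{\cF}\sim0$;
\item take $\Delta=\tfrac12S$ with $S=\{yz=x^2\}$, so the starting triple is trivially irredundant;
\item the complete cluster at $[0:0:1]$ consists of two blow-ups at singular points;
\item it turns the invariant line $L=\{y=0\}$ into a $(-1)$-curve $\bar L$ with two non-degenerate singularities and $K_{\cF'}\cdot\bar L=\Delta'\cdot\bar L=0$, which is $(\Delta',\cF')$-exceptional of type~(2).
\end{itemize}
Such a curve can only be excluded through the chain clause of redundancy. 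You must show that the image point of the contraction lies on no $(\Delta'',\cF'')$-chain. Camacho--Sad indices are a natural tool here: all indices of the components of an $\cF$-chain are negative rationals, and this has to be played against the reducedness of the cluster centers.

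Be careful about borrowing the paper's argument at this point. The paper handles Case~1 by asserting, via Camacho--Sad, that $C^2\le0$ and $\mathrm Z(\cF,C)\le1$, whence $K_{\cF}\cdot C\le-1$. In the example, $L^2=1$ and $\mathrm Z(\cF,L)=2$, even though $\bar L$ carries no saddle-node and $L$ is not a weak separatrix. So that step does not follow from the facts invoked there, and you should not simply import it.
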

\begin{proof}
Let \(E\) be an irreducible component of
\(\varrho_p^{-1}(p)\). If a later blow-up center lies on the successive
strict transform of \(E\), then \(E^2\leq-2\) on \(X'\), so \(E\) is
not \(\cF'\)-exceptional.
Suppose that \(E\) is not blown up again, and let \(q\) be the center
whose blow-up creates \(E\). If \(q\) is singular for the transformed
foliation, then
\[
K_{\cF'}\cdot E=0,
\qquad
\Delta'\cdot E=\mu_q>0,
\]
so \(E\) is not \((\Delta',\cF')\)-exceptional.
It remains to consider the case where \(q\) is regular. Since every
center except the root is singular, one has \(q=p\). If \(p\) is
multiple regular, then
\[
K_{\cF'}\cdot E=-1,
\qquad
\Delta'\cdot E=\mu_p
\geq\frac12m_p\geq1,
\]
and again \(E\) is not \((\Delta',\cF')\)-exceptional. If \(p\) is
simple regular, positive tangency forces the strict transform of
\(\Delta_{\mathrm{red}}\) to pass through the singular point of the
transformed foliation on the first exceptional divisor. Since the
cluster is complete, this point is blown up later, and hence that
exceptional divisor has self-intersection at most \(-2\).
Thus no irreducible component of \(\varrho_p^{-1}(p)\) is redundant.
Suppose now that \(C'\subset X'\) is a redundant
\((\Delta',\cF')\)-exceptional curve not contained in
\(\varrho_p^{-1}(p)\), and set
\[
C:=(\varrho_p)_*C'.
\]
If no center lies on the successive strict transforms of \(C\), then
\(\varrho_p\) is an isomorphism near \(C'\), and \(C\) was already
redundant on \((X,\cF,\Delta)\), contradicting irredundancy.
Suppose therefore that some center lies on the successive strict
transforms of \(C\). The curve \(C\) cannot be the weak separatrix of a
saddle-node: otherwise its successive strict transform would remain
the weak separatrix of a saddle-node, and \(C'\) would contain a
saddle-node, contradicting its redundancy.
The Camacho--Sad formula and the local computations in
\cite[pp.~30--31]{Bru15} give
\[
C^2\leq0.
\]
Since \((C')^2=-1\) and at least one center lies on the successive strict
transforms of \(C\),
\[
C^2\geq(C')^2+1=0.
\]
Thus \(C^2=0\). Equality also shows that precisely one center, with
multiplicity one, lies on the successive strict transforms of \(C\).
Since \(C'\) is smooth rational, so is \(C\).
If \(C\cap\Sing(\cF)\neq\varnothing\), the same local computation shows
that \(C\) contains a unique saddle-node and is its strong separatrix.
Hence
\[
\mathrm Z(\cF,C)\in\{0,1\},
\]
according as \(C\cap\Sing(\cF)\) is empty or nonempty. Therefore
\[
K_{\cF}\cdot C
=
2p_a(C)-2+\mathrm Z(\cF,C)
\leq-1.
\]
On the other hand, \(C\) is nef because it is irreducible and \(C^2=0\).
This contradicts the pseudo-effectivity of \(K_{\cF}\).
Hence no redundant \((\Delta',\cF')\)-exceptional curve occurs on
\(X'\), and \((X',\cF',\Delta')\) is irredundant.
\end{proof}

\begin{remark}[The role of pseudo-effectivity]
\label{rem:role-of-pseudo-effectivity}

For every complete cluster, the blow-up formula gives
\[
  K_{\cF'}
  =
  \varrho_p^*K_{\cF}
  +(1-\ell_p)\cE_1,
\]
where $\cE_1$ is the total transform of the exceptional curve created
by the first blow-up.  Hence $K_{\cF'}$ is pseudo-effective whenever
$K_{\cF}$ is pseudo-effective.

This hypothesis is used in the preceding proof only to exclude a
smooth rational $\cF$-invariant curve $C$ satisfying
\[
  C^2=0,
  \qquad
  K_{\cF}\cdot C<0.
\]
Indeed, such a curve is nef, contradicting the pseudo-effectivity of
$K_{\cF}$.

If $K_{\cF}$ is not pseudo-effective, then, by Miyaoka's theorem,
$\cF$ is induced by a rational fibration; see
\cite[Theorem~7.1]{Bru15}.  In this case the above
obstruction is a smooth rational fiber $C$ with
\[
  C^2=0,
  \qquad
  C\cap\Sing(\cF)=\varnothing,
  \qquad
  K_{\cF}\cdot C=-2.
\]
Blowing up the relevant point of $C$ makes its strict transform a
redundant $(-1)$-curve; contracting it is precisely the elementary
transformation required in the rational-fibration case.
\end{remark}

\begin{corollary}
\label{cor:cluster-intermediate-models-irredundant}
Let
\[
\rho\colon
(X_r,\cF_r,\Delta_r)
\longrightarrow
(X_0,\cF_0,\Delta_0)
\]
be a cluster-adapted resolution starting from an irredundant triple
(for instance, one for which $(X_0,\cF_0)$ is relatively minimal) with
$\nu(\cF_0)\geq0$. Then every triple obtained after the completion of a
cluster is irredundant and has pseudo-effective foliated canonical
divisor. Consequently, the negative-part theorem and
Corollary~\ref{cor:R(F,D)} apply at the beginning and the end of every
cluster.
\end{corollary}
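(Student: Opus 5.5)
The plan is an induction on the number $i$ of completed clusters. The statement has two parts, and each is checked at every stage $(X_i,\cF_i,\Delta_i)$ of the resolution
\[
  (X_r,\cF_r,\Delta_r)\xrightarrow{\varrho_{p_r}}\cdots\xrightarrow{\varrho_{p_1}}(X_0,\cF_0,\Delta_0).
\]
\textbf{(a)} $K_{\cF_i}$ is pseudo-effective. \textbf{(b)} $(X_i,\cF_i,\Delta_i)$ is irredundant. At $i=0$ both hold by hypothesis: (a) is equivalent to $\nu(\cF_0)\geq0$, and (b) is assumed. In the relatively minimal case, (b) holds because $(X_0,\cF_0)$ is reduced and has no $\cF_0$-exceptional curves, so there are no $(\Delta_0,\cF_0)$-exceptional curves at all.

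For the inductive step, suppose (a) and (b) hold at stage $i-1$. Irredundancy contains reducedness, so $\cF_{i-1}$ is reduced and the blow-up formula applies at every center of the cluster $\varrho_{p_i}$. The first center may be regular, but all later centers are singular by \eqref{eq:cluster-later-centers-singular}, so each of them contributes exceptional coefficient $1-\ell=0$. Composing the one-point formulas, as in Remark~\ref{rem:role-of-pseudo-effectivity}, gives
\[
  K_{\cF_i}=\varrho_{p_i}^*K_{\cF_{i-1}}+(1-\ell_{p_i})\cE_1,
\]
with $\cE_1\geq0$ and $1-\ell_{p_i}\in\{0,1\}$. The pullback of a pseudo-effective divisor is pseudo-effective, and adding an effective divisor preserves this, which proves (a) at stage $i$. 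Blowing up reduced or regular points keeps the foliation reduced. Now Proposition~\ref{prop:cluster-preserves-irredundancy} applies to the input triple, which is irredundant with pseudo-effective $K_{\cF_{i-1}}$, and shows that the output is irredundant. This proves (b).

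It remains to check the hypotheses needed by the consequences. For the negative-part Theorem~\ref{thm:zariski-decomposition-foliated-pairs}, we need the coefficients of $\Delta_i$ to lie in $[\tfrac12,1)$, its components to be non-invariant, and $K_{\cF_i}+\Delta_i$ to be pseudo-effective. The coefficients and non-invariance hold because $\Delta_i$ is a strict transform. Pseudo-effectivity follows from (a) by adding the effective $\Delta_i$. With irredundancy from (b), the theorem applies. Corollary~\ref{cor:R(F,D)} additionally needs $\nu(\cF_i)\geq0$, which is (a).

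The only delicate point is the exceptional coefficients. One must confirm that no center strictly inside a cluster is regular, since a regular later center would add a further positive multiple of an exceptional curve. This would be harmless for (a), but the single-term formula above depends on it. In any case, effectivity is what (a) needs, and the cluster definition guarantees it.
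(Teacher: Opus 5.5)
Your proposal is correct and follows the paper's proof. You induct over the completed clusters, get pseudo-effectivity of $K_{\cF_i}$ from the cluster blow-up formula of Remark~\ref{rem:role-of-pseudo-effectivity}, and get irredundancy from Proposition~\ref{prop:cluster-preserves-irredundancy}, whose pseudo-effectivity hypothesis is supplied by the previous stage; your explicit check of the hypotheses of the negative-part theorem and Corollary~\ref{cor:R(F,D)} spells out what the paper leaves implicit.
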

\begin{proof}
Apply Proposition~\ref{prop:cluster-preserves-irredundancy}
successively to the complete clusters of $\rho$. The
pseudo-effectivity of the foliated canonical divisor at every stage
follows from Remark~\ref{rem:role-of-pseudo-effectivity}.
\end{proof}

\subsection{The assembled volume and its cluster variation}
\label{subsec:assembled-volume}
We introduce a weighted tangency term that isolates the boundary
contribution to the variation of the adjoint self-intersection along
a cluster.

\begin{definition}
\label{def:weighted-tangency}
Set
\[
  \tau(\cF,\Delta)
  :=\Delta^2+\sum_{j=1}^{s}a_j^2K_{\cF}\cdot C_j.
\]
\end{definition}
Using $K_{\cF}\cdot C_j+C_j^2=\tang(\cF,C_j)$, one obtains
\begin{equation}
  \tau(\cF,\Delta)
  =\sum_{j=1}^{s}a_j^2\tang(\cF,C_j)
   +2\sum_{i<j}a_ia_jC_i\cdot C_j.
  \label{eq:weighted-tangency-expansion}
\end{equation}
In particular,
\begin{equation}
  \tang(\cF,\Delta_{\mathrm{red}})=0
  \quad\Longrightarrow\quad
  \tau(\cF,\Delta)=0.
  \label{eq:weighted-tangency-vanishes}
\end{equation}
\begin{definition}[Assembled volume term]
\label{def:assembled-volume}
Define
\[
  \sV(\cF,\Delta)
  :=\vol(K_{\cF}+\Delta)-\tau(\cF,\Delta).
\]
Equivalently,
\begin{equation}
  \sV(\cF,\Delta)
  =K_{\cF}^2
   +\sum_{j=1}^{s}a_j(2-a_j)K_{\cF}\cdot C_j
   -N(\Delta)^2.
  \label{eq:assembled-volume-expanded}
\end{equation}
\end{definition}
By \eqref{eq:weighted-tangency-vanishes}, one has
\[
  \sV(\cF,\Delta)=\vol(K_{\cF}+\Delta)
\]
whenever $(X,\cF,\Delta)$ is tangency-free.

Let
\[
  \varrho_p\colon(X',\cF',\Delta')
  \longrightarrow(X,\cF,\Delta)
\]
be one cluster on its starting triple.  Write
\[
  K_{\cF}+\Delta=P(\Delta)+N(\Delta),
  \qquad
  K_{\cF'}+\Delta'=P(\Delta')+N(\Delta')
\]
for the Zariski decompositions before and after the cluster.
\begin{definition}[Adjoint Zariski index]
\label{def:adjoint-zariski-index}
The \emph{adjoint Zariski index} of the cluster over $p$ is
\[
  \alpha_\Delta(p)
  :=N(\Delta)^2-N(\Delta')^2.
\]
\end{definition}
The subscript $\Delta$ emphasizes that the index depends on the coefficients of
the boundary.  When the boundary is fixed, we occasionally write $\alpha(p)$.
\begin{definition}[Local volume correction]
\label{def:local-volume-correction}
For a cluster rooted at $p$, define
\begin{equation}
  T_1(p)
  :=\alpha_\Delta(p)
   +(1-\ell_p)
     \left(
       \sum_{j=1}^{s}a_j(2-a_j)\mult_p(C_j)-1
     \right).
  \label{eq:T1-definition}
\end{equation}
All curves and multiplicities in this formula are taken on the starting triple
of the cluster.
\end{definition}
\begin{lemma}[One-blow-up identity]
\label{lem:one-blowup-weighted-identity}
Let $r_j:=\mult_p(C_j)$ and $s_p:=1-\ell_p$.  Under the blow-up
\eqref{eq:one-blowup-adjoint}, one has
\begin{equation}
  (s_p-\mu_p)^2
  =\tau(\cF,\Delta)-\tau(\cF_1,\Delta_1)
   +s_p\left(1-\sum_{j=1}^{s}a_j(2-a_j)r_j\right).
  \label{eq:one-blowup-weighted-identity}
\end{equation}
\end{lemma}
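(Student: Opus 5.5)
We compute the difference $\tau(\cF,\Delta)-\tau(\cF_1,\Delta_1)$ directly from the definition $\tau=\Delta^2+\sum_j a_j^2K_{\cF}\cdot C_j$. We use the two pullback relations
\[
  K_{\cF_1}=\sigma^*K_{\cF}+s_pE,
  \qquad
  C_{j,1}:=\sigma_*^{-1}C_j=\sigma^*C_j-r_jE.
\]
Write $\Delta_1=\sigma^*\Delta-\mu_pE$ with $\mu_p=\sum_ja_jr_j$. Since $\sigma^*D\cdot E=0$ and $E^2=-1$, we get
\[
  \Delta_1^2=\Delta^2-\mu_p^2.
\]
Similarly, for each $j$,
\[
  K_{\cF_1}\cdot C_{j,1}
  =(\sigma^*K_{\cF}+s_pE)\cdot(\sigma^*C_j-r_jE)
  =K_{\cF}\cdot C_j+s_pr_j .
\]
These give
\[
  \tau(\cF_1,\Delta_1)
  =\tau(\cF,\Delta)-\mu_p^2+s_p\sum_ja_j^2r_j ,
\]
that is,
\[
  \tau(\cF,\Delta)-\tau(\cF_1,\Delta_1)=\mu_p^2-s_p\sum_ja_j^2r_j .
\]

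Next we rewrite the right-hand side of \eqref{eq:one-blowup-weighted-identity}. Using $s_p\sum_j a_j(2-a_j)r_j=2s_p\mu_p-s_p\sum_ja_j^2r_j$, it equals
\[
  \mu_p^2-s_p\sum_ja_j^2r_j+s_p-2s_p\mu_p+s_p\sum_ja_j^2r_j
  =\mu_p^2-2s_p\mu_p+s_p .
\]
Since $s_p\in\{0,1\}$, we have $s_p=s_p^2$, so this is $(s_p-\mu_p)^2$. That is the left-hand side.

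The only points needing care are the intersection rules $E^2=-1$ and $\sigma^*D\cdot E=0$, and the sign in $K_{\cF_1}=\sigma^*K_{\cF}+(1-\ell_p)E$. This last formula was already recorded in the derivation of \eqref{eq:adjoint-blowup-formula}. We also need $s_p^2=s_p$, which holds because $\ell_p\in\{0,1\}$. This final step is where the identity would fail for a nonreduced center. No other obstacle is expected: the proof is a direct computation.
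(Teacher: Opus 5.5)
Your proof is correct and follows the same route as the paper: compute $\tau(\cF,\Delta)-\tau(\cF_1,\Delta_1)=\mu_p^2-s_p\sum_j a_j^2r_j$ from $K_{\cF_1}=\sigma^*K_{\cF}+s_pE$ and $C_{j,1}=\sigma^*C_j-r_jE$, then use $\mu_p=\sum_j a_jr_j$ and $s_p^2=s_p$. You also spell out the intermediate intersection computations that the paper leaves implicit.
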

\begin{proof}
Since
\[
  K_{\cF_1}=\sigma^*K_{\cF}+s_pE,
  \qquad
  C_{j,1}=\sigma^*C_j-r_jE,
\]
we have
\[
  \tau(\cF,\Delta)-\tau(\cF_1,\Delta_1)
  =\mu_p^2-s_p\sum_{j=1}^{s}a_j^2r_j.
\]
Now use $s_p^2=s_p$ and $\mu_p=\sum_ja_jr_j$.
\end{proof}
\begin{proposition}[Cluster transformation rule]
\label{prop:cluster-transformation-rule}
For every cluster rooted at $p$,
\begin{equation}
  \sV(\cF',\Delta')
  =\sV(\cF,\Delta)+T_1(p).
  \label{eq:cluster-transformation-rule}
\end{equation}
\end{proposition}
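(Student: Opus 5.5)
The plan is to work with the expanded form \eqref{eq:assembled-volume-expanded}, or equivalently with the definition $\sV=\vol-\tau$, and to track how $(K_{\cF}+\Delta)^2-\tau(\cF,\Delta)$ changes blow-up by blow-up along the cluster. The negative-part term is then handled separately through the adjoint Zariski index.

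First I rewrite $\sV$ purely in terms of self-intersections. Both $K_{\cF}+\Delta$ and $K_{\cF'}+\Delta'$ are big; indeed $K_{\cF'}+\Delta'$ differs from $\varrho_p^*(K_{\cF}+\Delta)$ by exceptional terms and pushes forward to $K_{\cF}+\Delta$. So both admit Zariski decompositions, and $P(\Delta)\cdot N(\Delta)=0$ gives
\[
  \vol(K_{\cF}+\Delta)=P(\Delta)^2=(K_{\cF}+\Delta)^2-N(\Delta)^2 .
\]
Hence
\[
  \sV(\cF,\Delta)=\bigl[(K_{\cF}+\Delta)^2-\tau(\cF,\Delta)\bigr]-N(\Delta)^2,
\]
and the same holds for the output triple. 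Subtracting the two expressions, the $N$-terms contribute exactly $N(\Delta)^2-N(\Delta')^2=\alpha_\Delta(p)$ by Definition~\ref{def:adjoint-zariski-index}. It therefore remains to show
\[
  \bigl[(K_{\cF'}+\Delta')^2-\tau(\cF',\Delta')\bigr]-\bigl[(K_{\cF}+\Delta)^2-\tau(\cF,\Delta)\bigr]
  =(1-\ell_p)\Bigl(\sum_j a_j(2-a_j)\mult_p(C_j)-1\Bigr).
\]
This quantity involves no Zariski decompositions, so it can be computed one blow-up at a time.

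For a single blow-up $\sigma$ at a center $q$ with $s_q=1-\ell_q$, formula \eqref{eq:one-blowup-adjoint} and $E^2=-1$ give
\[
  (K_{\cF_1}+\Delta_1)^2=(K_{\cF}+\Delta)^2-(s_q-\mu_q)^2 .
\]
Substituting Lemma~\ref{lem:one-blowup-weighted-identity}, the change in $(K+\Delta)^2-\tau$ across this blow-up equals
\[
  -(s_q-\mu_q)^2+\tau-\tau_1=-s_q\Bigl(1-\sum_j a_j(2-a_j)\mult_q(C_j)\Bigr).
\]
I then sum this telescopically over the centers $q_1=p,q_2,\dots,q_{r_p}$ of the cluster. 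By \eqref{eq:cluster-later-centers-singular}, every later center has $\ell(q_i)=1$, hence $s_{q_i}=0$, and its contribution vanishes identically, whatever its multiplicities. Only the root survives, contributing $(1-\ell_p)\bigl(\sum_j a_j(2-a_j)\mult_p(C_j)-1\bigr)$ with multiplicities taken on the starting triple. This is exactly the second term of $T_1(p)$ in \eqref{eq:T1-definition}, and \eqref{eq:cluster-transformation-rule} follows.

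The step needing the most care is the bookkeeping. The one-blow-up identity must be applied on each intermediate triple, where the boundary is the successive strict transform with the same coefficients. Consequently, the quantities $\tau$ and $(K+\Delta)^2$ at the intermediate stages must be those of these strict-transform triples. Zariski decompositions are invoked only at the two ends, where bigness holds, so nothing about the intermediate negative parts is needed. The vanishing of all non-root contributions rests entirely on the cluster definition forcing later centers into $\Sing$, which is where the grouping into clusters is used.
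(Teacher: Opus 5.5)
Your argument is correct and is essentially the paper's proof: the paper likewise combines $(K_{\cF_1}+\Delta_1)^2=(K_{\cF}+\Delta)^2-(s_q-\mu_q)^2$ with Lemma~\ref{lem:one-blowup-weighted-identity} at each center, kills every non-root contribution via $s_q=0$, and telescopes the negative parts to $\alpha_\Delta(p)$. The only difference is that you invoke Zariski decompositions just at the two ends of the cluster, which is slightly cleaner and matches the paper's later remark that the rule only needs pseudo-effectivity at the endpoints.

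One minor correction concerns your parenthetical justification. Bigness of $K_{\cF'}+\Delta'$ does not follow from bigness of $K_{\cF}+\Delta$, because the exceptional coefficients $1-\ell_q-\mu_q$ can be negative, e.g.\ at a multiple regular root. In the paper's setting it holds for a different reason: each intermediate adjoint divisor is the push-forward of the big adjoint divisor of the final tangency-free model.
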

\begin{proof}
For a single blow-up with discrepancy coefficient $s_q-\mu_q$, the Zariski
decomposition gives
\[
  \vol(K_{\cF_1}+\Delta_1)-\vol(K_{\cF}+\Delta)
  =-(s_q-\mu_q)^2
   +N(\Delta)^2-N(\Delta_1)^2.
\]
Sum this equality and
Lemma~\ref{lem:one-blowup-weighted-identity} over all centers of the cluster.
By \eqref{eq:cluster-later-centers-singular}, $s_q=0$ at every center except
possibly the root.  The negative-part squares telescope to
$\alpha_\Delta(p)$, and the remaining root term is precisely the second term
in \eqref{eq:T1-definition}.
\end{proof}
\begin{remark}
The one-blow-up identity and the cluster transformation rule are formal
whenever the adjoint divisors at the two endpoints are pseudo-effective.
The hypothesis $\nu(\cF)\geq0$ is used to preserve irredundancy along the
cluster-adapted resolution and to obtain the nonnegative decomposition of
the assembled term below.
\end{remark}
\begin{proposition}[Positivity of the assembled term]
\label{prop:assembled-volume-nonnegative}
Let $(X,\cF,\Delta)$ be an irredundant triple with $\nu(\cF)\geq0$,
and write
\[
  K_{\cF}=P+N
\]
for the Zariski decomposition of $K_{\cF}$.  Then
\begin{equation}
  \sV(\cF,\Delta)
  =P^2+\sum_{j=1}^sa_j(2-a_j)P\cdot C_j+R(\cF,\Delta),
  \label{eq:assembled-volume-positive-decomposition}
\end{equation}
where
\[
  R(\cF,\Delta)
  :=\sum_{j=1}^sa_j(2-a_j)N\cdot C_j
     +N^2-N(\Delta)^2\geq0.
\]
In particular,
\[
  \sV(\cF,\Delta)\geq0.
\]
\end{proposition}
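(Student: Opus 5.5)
This is a direct algebraic rearrangement of the expanded form \eqref{eq:assembled-volume-expanded} of $\sV(\cF,\Delta)$, followed by an appeal to Corollary~\ref{cor:R(F,D)} for the sign.

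First substitute the Zariski decomposition $K_{\cF}=P+N$ into \eqref{eq:assembled-volume-expanded}. Because $P$ is nef and $P\cdot N=0$ (orthogonality of $P$ to every component of $N$), we get
\[
  K_{\cF}^2=(P+N)^2=P^2+2P\cdot N+N^2=P^2+N^2 .
\]
Linearity of the intersection pairing gives
\[
  K_{\cF}\cdot C_j=P\cdot C_j+N\cdot C_j .
\]
Inserting both identities yields
\[
  \sV(\cF,\Delta)
  =P^2+\sum_j a_j(2-a_j)P\cdot C_j
   +\sum_j a_j(2-a_j)N\cdot C_j+N^2-N(\Delta)^2 .
\]
The last three terms are exactly $R(\cF,\Delta)$, which proves \eqref{eq:assembled-volume-positive-decomposition}.

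For the inequality $R(\cF,\Delta)\geq0$ we invoke Corollary~\ref{cor:R(F,D)}. Its hypotheses are precisely ours: the triple is irredundant and $\nu(\cF)\geq0$. It gives
\[
  R(\cF,\Delta)\geq P(\Delta)\cdot N\geq0 .
\]
Finally, $P^2\geq0$ because $P$ is nef. Each $P\cdot C_j\geq0$ for the same reason. Each weight $a_j(2-a_j)$ is positive since $a_j\in[\tfrac12,1)$. All three summands are therefore nonnegative, so $\sV(\cF,\Delta)\geq0$.

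There is no real obstacle here. The only point needing care is that $K_{\cF}+\Delta$ must be pseudo-effective so that $N(\Delta)$ is defined. This is part of the standing assumptions of the section and of Corollary~\ref{cor:R(F,D)}, so I will simply note it rather than reprove it.
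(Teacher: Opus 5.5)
Your proposal is correct and is essentially the paper's proof: substitute $K_{\cF}=P+N$ with $P\cdot N=0$ into \eqref{eq:assembled-volume-expanded}, then cite Corollary~\ref{cor:R(F,D)} for $R(\cF,\Delta)\geq0$. Using the nefness of $P$ to get $P^2\geq0$ and $P\cdot C_j\geq0$ just spells out the ``in particular'' that the paper leaves implicit; also, pseudo-effectivity of $K_{\cF}+\Delta$ need not be assumed, since it follows from $\nu(\cF)\geq0$ and $\Delta\geq0$.
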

\begin{proof}
Since $P\cdot N=0$, one has $K_{\cF}^2=P^2+N^2$.
Substituting $K_{\cF}=P+N$ into
\eqref{eq:assembled-volume-expanded} gives
\eqref{eq:assembled-volume-positive-decomposition}.  The nonnegativity of
$R(\cF,\Delta)$ is Corollary~\ref{cor:R(F,D)}.
\end{proof}

\subsection{The negative part across a cluster}
\label{subsec:negative-part-across-cluster}
\subsubsection{Simple regular clusters}
\label{subsubsec:simple-regular-clusters}
We first treat the explicit class of clusters.  Let
$p\in\Supp\Delta$ be a simple regular initial center, that is,
\[
  t_p>0,\qquad \ell_p=0,\qquad m_p=1.
\]
Set $n=t_p+1$ and consider the sequence of blow-ups
\[
  \varrho_p\colon
  (X',\cF',\Delta')
  =(X_n,\cF_n,\Delta_n)
  \xrightarrow{\sigma_n}
  \cdots
  \xrightarrow{\sigma_2}
  (X_1,\cF_1,\Delta_1)
  \xrightarrow{\sigma_1}
  (X_0,\cF_0,\Delta_0)
  =(X,\cF,\Delta),
\]
where $\sigma_i$ is the blow-up at $q_i$, with exceptional divisor
$E_i$, and
\[
  q_1=p,\qquad
  q_k=E_{k-1}\cap\Delta_{k-1,\red},
  \quad k=2,\ldots,n.
\]

Let $C$ be the unique component of $\Delta_{\red}$ through $p$, and
let $C'$ be its strict transform on $X'$.  If $\bar E_i$ denotes the
strict transform of $E_i$ on $X'$, then the configuration over $p$ is
\[
  \underbrace{
    \bar E_1\,(-2)
    \;-\;\cdots\;-\;
    \bar E_{n-1}\,(-2)
  }_{[2,\ldots,2]}
  \;-\;
  E_n\,(-1)
  \;-\;
  C'.
\]
Moreover,
\[
  \ell(q_1)=0,
  \qquad
  \ell(q_i)=1
  \quad\text{for }i=2,\ldots,n.
\]

\begin{proposition}[Simple regular clusters]\label{prop:simple-regular-clusters}
Suppose \(t_p>0\), \(\ell_p=0\), \(m_p=1\), and the triple
\((X,\cF,\Delta)\) is irredundant with \(\nu(\cF)\geq0\).  With the
notation above, the following hold.
\begin{enumerate}
\item \(p\) does not lie on $\Supp N(\Delta)$.  During the morphism \(\varrho_p\), one has
\[
t_{q_1}=t_p,\qquad
t_{q_k}-t_{q_{k+1}}=
\begin{cases}
0,\quad&\text{for }k=1,\\
1,\quad&\text{for }k=2,\dots,t_p,
\end{cases}
\qquad t_{q_n}=1, 
\]
where $t_{q_i}:=\tang(\cF_{i-1},\Delta_{i-1,\red},q_i)$, $i=1,\dots,n$, and
\[
\tang(\cF',\Delta'_{\red},q)=0
\quad
\text{for every }q\in \Supp\Delta'\cap\varrho_p^{-1}(p).
\]
Moreover, \(\varrho_p\) is the cluster over \(p\).
\item The triple \((X', \cF', \Delta')\) is irredundant and
$K_{\cF'}$ is pseudo-effective.
\item The divisor
\[
\Theta^+=\bar E_1+\cdots+\bar E_{t_p}
\]
is a new maximal \((\Delta',\cF')\)-chain, and
\[
N(\Delta')=\varrho_p^*N(\Delta)+M(\Delta',\Theta^+),\qquad M(\Delta',\Theta^+)
=
\sum_{i=1}^{t_p}\frac{t_p+1-i}{t_p+1}\,\bar E_i.
\]
\end{enumerate}
\end{proposition}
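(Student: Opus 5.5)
We treat the three assertions in order, working throughout in local coordinates at the regular point $p$.

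\emph{Part (1): the point $p$ and the tangency sequence.} First we show that $p$ does not lie on $\Supp N(\Delta)$. By Theorem~\ref{thm:zariski-decomposition-foliated-pairs}, $\Supp N(\Delta)$ is a disjoint union of $\cF$-invariant chains. Since $p$ is a regular point of $\cF$ on the non-invariant curve $C$, it can lie on such a chain only if $C$ meets a chain component $\Gamma$ at $p$. Regularity of $\cF$ at $p$ then gives $C\cdot\Gamma\geq\tang$-order plus one, so tangency of order $t_p$ gives local intersection $t_p+1\geq2$. This yields $\Delta\cdot\Gamma\geq\frac12\cdot2=1$, contradicting the definition of a $(\Delta,\cF)$-chain.

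For the tangency sequence we use the normal form $\cF\colon dx=0$ and $C\colon x=y^{n}$ with $n=t_p+1$, so that $\tang(\cF,C,p)=n-1$.
\begin{itemize}
\item Blowing up at $p$ in the chart $x=x_1y$ gives the foliation $d(x_1y)=0$, which has a reduced non-degenerate singularity of eigenvalue ratio $-1$ at the origin. The strict transform there is $x_1=y^{n-1}$.
\item Tangency is now measured with $\ell=1$: by \eqref{eq:tangency-drop}, the drop at $q_1$ is $m(m-1+\ell)=0$ since $m=1$ and $\ell=0$. Hence $t_{q_2}=t_{q_1}$.
\item Each subsequent blow-up at $q_k$ (a singular point with $m=1$) drops the tangency by exactly one, by the same formula. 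Inductively the local form is $x_k=y^{n-k}$, and $q_k$ is the corner singularity on $E_{k-1}$.
\item After the $n$-th blow-up, the strict transform meets $E_n$ transversally at a regular point, with tangency zero.
\end{itemize}
This bookkeeping gives the stated values of $t_{q_k}$ together with the final vanishing. Minimality is checked against the definition of a cluster: every intermediate center $q_k$ lies on the boundary and is singular, so cannot be skipped, and after the $n$-th blow-up condition~(2) of the definition holds.

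\emph{Part (2): irredundancy and pseudo-effectivity.} This is immediate from Proposition~\ref{prop:cluster-preserves-irredundancy} together with Remark~\ref{rem:role-of-pseudo-effectivity}.

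\emph{Part (3): the new chain and the negative part.} First we identify the chain. The curve $\bar E_1$ carries the single singularity, which is the corner with $\bar E_2$; so $\mathrm Z(\cF',\bar E_1)=1$. Each $\bar E_i$ with $2\leq i\leq t_p$ carries two reduced non-degenerate corners, so $\mathrm Z=2$. All these curves are $(-2)$-curves. The boundary $\Delta'$ meets only $E_n$, and $E_n$ is not part of the chain. Hence $\Theta^+=\bar E_1+\cdots+\bar E_{t_p}$ is an $\cF'$-chain, and since $\Delta'\cdot\Gamma=0$ on each of its components it is a $(\Delta',\cF')$-chain with $\theta=0$.

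Next we check maximality. The curve $E_n$ has self-intersection $-1$, so it cannot extend the chain, and $\bar E_1$ has no further neighbours. For the $[2,\ldots,2]$ string of length $t_p$, the Hirzebruch--Jung data are $n_\Theta=t_p+1$ and $\lambda_i=t_p+1-i$. Formula \eqref{eq:MTheta-coefficients} then gives the stated coefficients of $M(\Delta',\Theta^+)=M(\Theta^+)$.

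Finally we compare negative parts. The other maximal chains do not pass through $p$, by part~(1), so they pull back isomorphically and remain maximal $(\Delta',\cF')$-chains. Conversely, any maximal chain on $X'$ either avoids $\varrho_p^{-1}(p)$, and then comes from a chain on $X$, or is contained in the exceptional configuration. In the latter case it must be $\Theta^+$: the curve $E_n$ satisfies $K_{\cF'}\cdot E_n=0$ but $\Delta'\cdot E_n>0$, with two singular points present. Applying Theorem~\ref{thm:zariski-decomposition-foliated-pairs} on both sides now gives $N(\Delta')=\varrho_p^*N(\Delta)+M(\Delta',\Theta^+)$.

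The main obstacle is this final step: one must rule out that some exceptional curve merges with an old chain or with $E_n$ to form a longer chain. We handle it by exploiting disjointness from $p$, established in part~(1).
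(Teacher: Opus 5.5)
Your proposal is correct and follows essentially the same route as the paper. The shared steps are: the leaf-intersection bound $\Delta\cdot\Gamma\geq a(t_p+1)\geq1$ for $p\notin\Supp N(\Delta)$; the drop formula \eqref{eq:tangency-drop} along the centers, which your normal form $\cF\colon dx=0$, $C\colon x=y^{t_p+1}$ makes more explicit; Proposition~\ref{prop:cluster-preserves-irredundancy} for (2); and the explicit $[2,\ldots,2]$-string plus the negative-part theorem at both ends for (3).

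The one step you leave implicit, and which the paper also passes over quickly, is ruling out a new chain through the strict transform $\bar L$ of the leaf through $p$. Since $\bar L$ meets $E_n$, it escapes your ``avoids or is contained in $\varrho_p^{-1}(p)$'' dichotomy. It is excluded because $K_{\cF'}\cdot\bar L=K_{\cF}\cdot L+1$ would force $K_{\cF}\cdot L\in\{-2,-1\}$. The value $-2$ contradicts pseudo-effectivity of $K_{\cF}$, since $L$ would be a nef curve with $L^2=0$. The value $-1$ contradicts Theorem~\ref{thm:separatrix} applied to $L$ together with the chain it would terminate: that configuration is negative definite (as $L^2\leq-1$ by Camacho--Sad) and all its singularities are corners.
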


\begin{proof}
Let $C$ be the unique component of $\Delta_{\mathrm{red}}$ through $p$,
and let $a$ be its coefficient in $\Delta$.
By the negative-part theorem, $\Supp N(\Delta)$ is the disjoint union of
the maximal $(\Delta,\cF)$-chains.  If $p$ lies on a component $\Gamma$
of such a chain, then $\Gamma$ is the leaf through the regular point
$p$.  Since $m_p=1$ and $t_p>0$, the curve $C$ is smooth at $p$ and
\[
  (C\cdot\Gamma)_p=t_p+1\geq2.
\]
Consequently,
\[
  \Delta\cdot\Gamma
  \geq a(C\cdot\Gamma)_p
  \geq2a
  \geq1,
\]
contradicting the defining intersection conditions of a
$(\Delta,\cF)$-chain.  Hence
\[
  p\notin\Supp N(\Delta).
\]
At the first center one has $\ell(q_1)=0$ and $m_{q_1}=1$, so the
one-blow-up tangency formula gives no tangency drop.  At every subsequent
center one has $\ell(q_i)=m_{q_i}=1$, and the tangency drops by one.
This gives all the displayed identities for the $t_{q_i}$.  In
particular, $t_{q_n}=1$, and the blow-up at $q_n$ eliminates the remaining
tangency over $p$.  Since the tangency is positive before the final
blow-up, the sequence is minimal.  Thus $\varrho_p$ is the cluster over
$p$, proving \rm(1).
Assertion \rm(2) follows from
Proposition~\ref{prop:cluster-preserves-irredundancy}.
For \rm(3), the local exceptional configuration shows that
\[
  \Theta^+=\bar E_1+\cdots+\bar E_{t_p}
\]
is a new $(\Delta',\cF')$-chain of type $[2,\ldots,2]$.  Since
$p\notin\Supp N(\Delta)$, it cannot join the strict transform of an old
maximal chain, and is therefore maximal.  Moreover, $\Delta'$ meets only
the last exceptional curve $E_{t_p+1}$, so
\[
  \Delta'\cdot\bar E_i=0
  \qquad(1\leq i\leq t_p).
\]
Hence the boundary multiplicity of $\Theta^+$ is zero, and solving the
intersection equations for the string $[2,\ldots,2]$ gives the displayed
formula for $M(\Theta^+)$.
Finally, all old maximal $(\Delta,\cF)$-chains are unchanged because
$p\notin\Supp N(\Delta)$.  By \rm(2), the negative-part theorem applies
also to the output triple, whose maximal chains are the strict transforms
of the old ones together with $\Theta^+$.  This gives the stated formula
for $N(\Delta')$.
\end{proof}
\subsubsection{Singular and multiple regular clusters}
\label{subsubsec:singular-multiple-regular-clusters}
We now treat the remaining two types of clusters.  In contrast with a simple
regular cluster, the exceptional configuration is not determined solely by
the tangency order at the root: an old maximal $(\Delta,\cF)$-chain may be
replaced or enlarged.  For a smooth rational $\cF$-invariant curve $\Gamma$
through a point $p$, set
\[
  d_p(\Gamma):=(\Delta\cdot\Gamma)_p.
\]
For a $(\Delta,\cF)$-chain
$\Theta=\Gamma_1+\cdots+\Gamma_r$, we write
\[
  \theta_\Delta(\Theta):=\Delta\cdot\Gamma_1.
\]
\begin{definition}[Potential curves]
\label{def:potentialcurve}
Let $p\in\Supp\Delta$ be a singular cluster root, so that $\ell_p=1$.
A smooth rational $\cF$-invariant curve $\Gamma$ through $p$ is called a
\emph{potential curve} of $(\Delta,\cF)$ if one of the following conditions
holds.
\begin{enumerate}
  \item[\rm(E1)] $\Theta=\Gamma$ is a one-component maximal
  $(\Delta,\cF)$-chain, and $p$ is the unique nondegenerate reduced
  singularity of $\cF$ on $\Gamma$.
  \item[\rm(E2)] The point $p$ is the unique nondegenerate reduced
  singularity of $\cF$ on $\Gamma$, and
  \[
    1\leq\Delta\cdot\Gamma<d_p(\Gamma)+1.
  \]
  \item[\rm(E3)] The curve $\Gamma$ contains precisely two nondegenerate
  reduced singularities $p$ and $q$, meets a maximal
  $(\Delta,\cF)$-chain
  \[
    \Theta=\Gamma_1+\cdots+\Gamma_r
  \]
  at $q=\Gamma\cap\Gamma_r$, and
  \[
    \Delta\cdot\Gamma=d_p(\Gamma).
  \]
\end{enumerate}
\end{definition}
We now describe the change of the negative part across a complete cluster.
\begin{proposition}[Singular or multiple regular clusters]
\label{prop:singular-or-multiple-regular-centers}
Let
\[
  \varrho_p\colon(X',\cF',\Delta')\longrightarrow(X,\cF,\Delta)
\]
be a complete cluster starting from an irredundant triple with
$\nu(\cF)\geq0$.
Assume that either $\ell_p=0$ and $m_p\geq2$, or $\ell_p=1$ and
$m_p\geq1$.  Then $(X',\cF',\Delta')$ is irredundant and
$K_{\cF'}$ is pseudo-effective, and the following statements hold.
\begin{enumerate}
  \item Suppose that $\ell_p=0$.  The old maximal
  $(\Delta,\cF)$-chains are unchanged.  At most one new maximal
  $(\Delta',\cF')$-chain $\Theta^+$ can occur; if it occurs, it is contained
  in $\varrho_p^{-1}(p)$ and its first component is the strict transform
  $\bar E_1$ of the first exceptional divisor.  More precisely,
  \[
    N(\Delta')=
    \begin{cases}
      \varrho_p^*N(\Delta),
        &\Delta'\cdot\bar E_1\geq1,\\[1mm]
      \varrho_p^*N(\Delta)+M(\Delta',\Theta^+),
        &\Delta'\cdot\bar E_1<1.
    \end{cases}
  \]
  In the second case,
  \[
    \theta_{\Delta'}(\Theta^+)=\Delta'\cdot\bar E_1.
  \]

  \item Suppose that $\ell_p=1$.  Then
\[
  N(\Delta')=\varrho_p^*N(\Delta)
\]
unless $p$ lies on a potential curve $\Gamma$.  In each exceptional
case below, write $\Xi$ for the exceptional part of $\Theta^+$,
allowing $\Xi=0$.  Thus $\Xi$ is a Hirzebruch--Jung string supported
on a single branch of $\varrho_p^{-1}(p)$.

\begin{enumerate}

  \item[\rm(E1)] The strict transform $\bar\Gamma$ of the old
  one-component chain $\Theta=\Gamma$ extends to the maximal chain
  $\Theta^+=\bar\Gamma+\Xi$.
  Moreover,
  \[
    N(\Delta')
    =\varrho_p^*\!\bigl(N(\Delta)-M(\Delta,\Theta)\bigr)
     +M(\Delta',\Theta^+),
    \qquad
    \theta_{\Delta'}(\Theta^+)=0.
  \]

  \item[\rm(E2)] A new maximal $(\Delta',\cF')$-chain
  $\Theta^+=\bar\Gamma+\Xi$
  is created, and
  \[
    N(\Delta')
    =\varrho_p^*N(\Delta)+M(\Delta',\Theta^+),
    \qquad
    \theta_{\Delta'}(\Theta^+)
    =\Delta\cdot\Gamma-d_p(\Gamma).
  \]

  \item[\rm(E3)] Let $\bar\Theta$ be the strict transform of the old
  maximal chain $\Theta$.  Then $\Theta$ is enlarged to
  $\Theta^+=\bar\Theta+\bar\Gamma+\Xi$,
  and
  \[
    N(\Delta')
    =\varrho_p^*\!\bigl(N(\Delta)-M(\Delta,\Theta)\bigr)
     +M(\Delta',\Theta^+),
    \qquad
    \theta_{\Delta'}(\Theta^+)=\theta_\Delta(\Theta).
  \]
\end{enumerate}
\end{enumerate}
\end{proposition}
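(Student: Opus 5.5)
The plan is to deduce everything from the negative-part theorem (Theorem~\ref{thm:zariski-decomposition-foliated-pairs}) applied on both ends of the cluster. Irredundancy of $(X',\cF',\Delta')$ is Proposition~\ref{prop:cluster-preserves-irredundancy}, and pseudo-effectivity of $K_{\cF'}$ is Remark~\ref{rem:role-of-pseudo-effectivity}. Hence $N(\Delta)$ and $N(\Delta')$ are the sums of $M(\cdot,\cdot)$ over the maximal chains of the input and output triples. The proof then reduces to a combinatorial comparison: which maximal $(\Delta',\cF')$-chains on $X'$ are strict transforms of old ones, which are enlarged, and which are new.

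The first step is to control the exceptional locus. Every later center is singular and lies on $\Supp\Delta$. Using \eqref{eq:one-blowup-adjoint} and \eqref{eq:tangency-drop}, I would record for each exceptional component $\bar E$ the numbers $K_{\cF'}\cdot\bar E$ and $\Delta'\cdot\bar E$. A component created at a singular center $q$ and not blown up again satisfies $\Delta'\cdot\bar E=\mu_q>0$; since $\mu_q\geq\frac12 m_q$, this forces either $\Delta'\cdot\bar E\geq1$, or $m_q=1$ with a single transverse branch of coefficient $<1$. Any chain inside $\varrho_p^{-1}(p)$ must therefore start at the unique component that is a $(-1)$-curve with $K_{\cF'}\cdot\bar E=-1$, or at a component with $\Delta'\cdot\bar E<1$. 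All other components of such a chain must satisfy $\Delta'\cdot\bar E=0$ and $\mathrm Z=2$, which makes the exceptional part $\Xi$ a string on a single branch. This yields the \emph{at most one new chain} claims.

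In the regular case $\ell_p=0$, $m_p\geq2$, the first exceptional curve $\bar E_1$ is the only component with $K_{\cF'}\cdot\bar E_1=-1$, so a new chain must begin there, and it exists exactly when $\Delta'\cdot\bar E_1<1$. Since $p$ is regular, it meets an old chain component $\Gamma$ only if $\Gamma$ is the leaf through $p$. Then $\Delta\cdot\Gamma\geq\mu_p\geq1$, as in the proof of Proposition~\ref{prop:simple-regular-clusters}(1), so old chains avoid $p$ and pull back unchanged.

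In the singular case $\ell_p=1$, an old chain through $p$, or an invariant curve through $p$ that becomes chain-like after blowing up, is affected. I would enumerate the invariant curves $\Gamma$ through $p$ whose strict transforms $\bar\Gamma$ lie in a maximal chain on $X'$. Using \eqref{eq:invariant-index-formula}, the value $\mathrm Z(\cF',\bar\Gamma)$ is one less than $\mathrm Z(\cF,\Gamma)$ when $\bar\Gamma$ is separated from the exceptional boundary branch. Requiring $\Delta'\cdot\bar\Gamma=\Delta\cdot\Gamma-d_p(\Gamma)$ to be $0$ or $<1$ then gives exactly (E1)--(E3). Specifically, $\bar\Gamma$ is the first curve of a chain in (E1) and (E2), and an interior curve attached to $\Theta$ in (E3). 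In each case the chain extends into the exceptional locus by $\Xi$, and the stated $\theta$ values follow from which component meets $\Delta'$.

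The final step is the formula for $N(\Delta')$. Unaffected chains $\Theta$ satisfy $\varrho_p^*M(\Delta,\Theta)=M(\Delta',\bar\Theta)$ because they avoid the centers. Affected chains are removed from $\varrho_p^*N(\Delta)$ and replaced by $M(\Delta',\Theta^+)$. I expect the main obstacle to be the singular case: proving that these three configurations exhaust the possibilities, and in particular excluding two potential curves at once or an old chain passing through $p$ other than in (E1). For this I would use the separatrix theorem (Theorem~\ref{thm:separatrix}) together with irredundancy, along the lines of the proof of Theorem~\ref{thm:zariski-decomposition-foliated-pairs}.
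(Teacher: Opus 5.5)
The central step of your singular-case analysis rests on a false identity. When $\ell_p=1$, every center of the cluster is a singular point, root included. Hence $K_{\cF'}=\varrho_p^*K_{\cF}$, and $K_{\cF'}\cdot\bar\Gamma=K_{\cF}\cdot\Gamma$ for every invariant curve $\Gamma$ through $p$. By \eqref{eq:invariant-index-formula}, $\mathrm Z(\cF',\bar\Gamma)=\mathrm Z(\cF,\Gamma)$, not $\mathrm Z(\cF,\Gamma)-1$: $\bar\Gamma$ loses $p$ but meets the invariant exceptional fibre at a new singular point.

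This invariance is exactly what drives the paper's classification. It gives $K_{\cF'}\cdot E=0$ for every exceptional component, so nothing in $\varrho_p^{-1}(p)$ can start a chain. Your first step conflates the two conditions: a first component needs $K_{\cF'}\cdot\bar E=-1$ \emph{and} $\Delta'\cdot\bar E<1$. It also gives the two curve cases. A first component $\bar\Gamma$ forces $\mathrm Z(\cF,\Gamma)=1$, i.e.\ $p$ is the only singularity on $\Gamma$, which is (E1)/(E2). An interior component forces $\mathrm Z(\cF,\Gamma)=2$ with singularities $p$ and $\Gamma\cap\Gamma_r$, which is (E3). With your relation a first component would need $\mathrm Z(\cF,\Gamma)=2$, contradicting (E1) and (E2), so your derivation cannot produce the stated cases.

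The exhaustiveness you defer to the separatrix theorem needs different, elementary inputs that your plan does not contain.
\begin{enumerate}
\item An old chain through $p$ has $p\in\Gamma_1$, since $p\in\Supp\Delta$ and $\Delta\cdot\Gamma_i=0$ for $i\ge2$. If $r\ge2$, the only singularity on $\Gamma_1$ is $\Gamma_1\cap\Gamma_2$. Then $p\in\Gamma_2$ and $\Delta\cdot\Gamma_2>0$, a contradiction. So only one-component chains are affected, which is (E1).
\item For a new first component $\bar\Gamma$, the inequality $\Delta\cdot\Gamma\ge1$ in (E2) comes from irredundancy and maximality. Camacho--Sad gives $\Gamma^2\le-1$. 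If $\Delta\cdot\Gamma<1$, then $\Gamma$ is itself a $(\Delta,\cF)$-chain through $p$ when $\Gamma^2\le-2$, contradicting the case assumption. When $\Gamma^2=-1$ it is a redundant curve, contradicting irredundancy.
\end{enumerate}

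In the regular case you never exclude chains beginning at, or passing through, the strict transform $\bar C$ of the leaf through $p$. Your statement that $\bar E_1$ is the only component with $K_{\cF'}\cdot\bar E=-1$ holds only among exceptional curves. Also, $\bar E_1$ need not be a $(-1)$-curve on $X'$. Since $K_{\cF'}\cdot\bar C=K_{\cF}\cdot C+1$, $\bar C$ can be a first component only if $C$ is a smooth rational leaf with no singularities. Then $C^2=0$ by Camacho--Sad, so $C$ is nef with $K_{\cF}\cdot C=-2$, contradicting pseudo-effectivity of $K_{\cF}$. The containment $\Theta^+\subset\varrho_p^{-1}(p)$ needs one more observation. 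The exceptional path from $\bar C$ to $\bar E_1$ contains a $(-1)$-curve, which cannot lie in a Hirzebruch--Jung string.
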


\begin{proof}

The birational transformation formula and $\nu(\cF)\geq0$ imply that
$K_{\cF'}$ is pseudo-effective.  By
Proposition~\ref{prop:cluster-preserves-irredundancy}, the output
triple is irredundant.  Hence the negative-part theorem applies at
both endpoints.

Suppose first that $\ell_p=0$.  Since $m_p\geq2$ and every coefficient
of $\Delta$ is at least $1/2$, one has $\mu_p\geq1$.  If $p$ lay on
a maximal $(\Delta,\cF)$-chain, then its invariant component through
$p$ would be the leaf through $p$ and would have boundary intersection
at least $\mu_p$, contradicting the defining intersection conditions
of a $(\Delta,\cF)$-chain.  Thus
$p\notin\Supp N(\Delta)$,
and every old maximal chain remains unchanged.

Among the exceptional curves over $p$, only $\bar E_1$ can be the
first component of a new chain, since
\[
  K_{\cF'}\cdot\bar E_1=-1,
  \qquad
  K_{\cF'}\cdot E=0
\]
for every other exceptional component $E$.  Nor can the first
component be the strict transform $\bar C$ of a curve $C\subset X$.
Indeed, $C$ would be the invariant leaf through $p$.  The singularity
of $\cF'$ on $\bar C$ over $p$ accounts for
$\mathrm Z(\cF',\bar C)=1$, so $C$ contains no singularity of
$\cF$.  The invariant index and Camacho--Sad formulas then give
\[
  K_{\cF}\cdot C=-2,
  \qquad
  C^2=0.
\]
Thus $C$ is nef, contradicting the pseudo-effectivity of $K_{\cF}$.
Hence every new chain begins with $\bar E_1$.

Such a chain cannot contain any curve coming from $X$.  Indeed, the
only possible one is the invariant separatrix through $p$, and the
exceptional path joining its strict transform to $\bar E_1$ contains
a $(-1)$-curve, which cannot belong to a Hirzebruch--Jung string.
Consequently, every new maximal chain is contained in
$\varrho_p^{-1}(p)$ and begins with $\bar E_1$.  It occurs precisely
when $\Delta'\cdot\bar E_1<1$.
This proves \rm(1).

Suppose now that $\ell_p=1$.  Since every center of the cluster is
singular,
\[
  K_{\cF'}=\varrho_p^*K_{\cF}.
\]
Thus every exceptional component has canonical intersection zero and
cannot be the first component of an $\cF'$-chain.  Therefore the first
component of every maximal chain affected by the cluster comes from
$X$.

Let $\Theta=\Gamma_1+\cdots+\Gamma_r$ be an old maximal
$(\Delta,\cF)$-chain passing through $p$.  Since
$\Delta\cdot\Gamma_i=0$ for $i\geq2$, one has $p\in\Gamma_1$.  If
$r\geq2$, then $p$ must be the unique singularity
$\Gamma_1\cap\Gamma_2$ of $\cF$ on $\Gamma_1$, forcing
$\Delta\cdot\Gamma_2>0$, a contradiction.  Hence $r=1$; write
$\Theta=\Gamma$.

Since $\Delta\cdot\Gamma<1$ and every positive coefficient of
$\Delta$ is at least $1/2$, the boundary meets $\Gamma$ only at $p$
and does so transversely.  Hence
$\Delta'\cdot\bar\Gamma=0$.

No other component of the resulting maximal chain can come from
$X$.  Indeed, since $p$ is the unique singularity of $\cF$ on
$\Gamma$, any such component would have to be the strict transform
of the other invariant separatrix through $p$.  The exceptional path
joining it to $\bar\Gamma$ contains a $(-1)$-curve, which cannot
belong to a Hirzebruch--Jung string.  Thus the remaining components
form a possibly empty Hirzebruch--Jung string $\Xi$ contained in a
single branch of $\varrho_p^{-1}(p)$, and
\[
  \Theta^+=\bar\Gamma+\Xi,
  \qquad
  \theta_{\Delta'}(\Theta^+)=0.
\]
This is case \rm(E1).

Assume now that no old maximal chain passes through $p$.  If no
maximal chain is affected by the cluster, then
\[
  N(\Delta')=\varrho_p^*N(\Delta).
\]
Otherwise, let $\Theta^+$ be an affected maximal chain.

If its first component does not come from an old maximal chain, then
it is the strict transform $\bar\Gamma$ of a smooth rational
$\cF$-invariant curve $\Gamma$ through $p$.  Since
\[
  K_{\cF}\cdot\Gamma
  =K_{\cF'}\cdot\bar\Gamma=-1,
\]
one has $\mathrm Z(\cF,\Gamma)=1$, so $p$ is the unique
nondegenerate singularity of $\cF$ on $\Gamma$.  Moreover,
\[
  \Delta'\cdot\bar\Gamma
  =\Delta\cdot\Gamma-d_p(\Gamma)<1.
\]
The Camacho--Sad formula gives $\Gamma^2\leq-1$.  If
$\Delta\cdot\Gamma<1$, then $\Gamma$ would be contained in an old
maximal $(\Delta,\cF)$-chain when $\Gamma^2\leq-2$, and would be
redundant when $\Gamma^2=-1$.  Both are impossible.  Therefore
\[
  1\leq\Delta\cdot\Gamma<d_p(\Gamma)+1.
\]
The same argument as in case \rm(E1) shows that no other component of
$\Theta^+$ comes from $X$.  Hence
\[
  \Theta^+=\bar\Gamma+\Xi,
\]
where $\Xi$ is a possibly empty Hirzebruch--Jung string contained in
a single branch of $\varrho_p^{-1}(p)$.  This is case \rm(E2).

In the remaining case, $\Theta^+$ enlarges an old maximal chain
$\Theta=\Gamma_1+\cdots+\Gamma_r$.  There is a smooth rational
$\cF$-invariant curve $\Gamma$ through $p$ joining $\Gamma_r$ to the
exceptional fiber.  Since $\bar\Gamma$ is a non-first component of
$\Theta^+$,
\[
  K_{\cF}\cdot\Gamma
  =K_{\cF'}\cdot\bar\Gamma=0,
\]
and hence $\mathrm Z(\cF,\Gamma)=2$.  Its two singularities are
therefore $p$ and $\Gamma\cap\Gamma_r$.  Moreover,
$\Delta'\cdot\bar\Gamma=0$, so
\[
  \Delta\cdot\Gamma=d_p(\Gamma).
\]

By the maximality of $\Theta$ and the same argument as in
case \rm(E1), no additional component of $\Theta^+$ comes from $X$.
Writing $\bar\Theta$ for the strict transform of $\Theta$, one has
\[
  \Theta^+=\bar\Theta+\bar\Gamma+\Xi,
\]
where $\Xi$ is a possibly empty Hirzebruch--Jung string contained in
a single branch of $\varrho_p^{-1}(p)$.  Since the first component of
the old chain is unchanged,
\[
  \theta_{\Delta'}(\Theta^+)=\theta_\Delta(\Theta).
\]
This is case \rm(E3), and the three cases are exhaustive.

The asserted formulas now follow from the decomposition of the
negative part into the contributions of its maximal chains.  In
part \rm(1), the old contributions remain unchanged and
$M(\Delta',\Theta^+)$ is the only possible new contribution.  In
cases \rm(E1) and \rm(E3), the old contribution is replaced by
$M(\Delta',\Theta^+)$, while in case \rm(E2) it is added as a new
contribution.

\end{proof}

\begin{remark}
The simple regular clusters of
Proposition~\ref{prop:simple-regular-clusters} formally fit the second
alternative in part~\rm(1): they create a chain over $p$ with first component
$\bar E_1$ and boundary multiplicity zero.  They are treated separately
because the resulting maximal chain is explicit, namely a
$[2,\ldots,2]$-string.
\end{remark}

\subsection{Adjoint Zariski indices of clusters}
\label{subsec:adjoint-zariski-index-computation}
The preceding subsection determines the change of the negative part across
each type of cluster.  We now express its numerical consequence in terms of
the local $\beta$-numbers of the singularities carried by the maximal chains.
Let $q$ be a nondegenerate reduced singularity lying on an
$\cF$-chain.  Write its characteristic ratio as
\[
  -\lambda_q=\frac{u_q}{v_q}\in\mathbb Q_{>0},
  \qquad
  \gcd(u_q,v_q)=1,
\]
and set
\[
  \beta_q(\cF):=\frac{1}{u_qv_q}.
\]
This definition is independent of the ordering of the two
eigenvalues.
For an $\cF$-chain $\Theta$, the invariant
$\beta_{\cF}(\Theta)$ introduced in
Subsection~\ref{subsec:special-chains} admits the local expression
\begin{equation}
  \beta_{\cF}(\Theta)
  =
  \sum_{q\in\Theta\cap\Sing(\cF)}\beta_q(\cF).
  \label{eq:chain-beta-mass}
\end{equation}

Recall from \eqref{eq:MTheta-square} that, for
$\Theta=\Gamma_1+\cdots+\Gamma_r$ with $e_i=-\Gamma_i^2$,
\[
  n_\Theta=[e_1,\ldots,e_r],
  \qquad
  q_\Theta=[e_2,\ldots,e_r],
  \qquad
  \beta_{\cF}(\Theta)=\frac{q_\Theta}{n_\Theta},
  \qquad
  \coeff_{\Gamma_r}M(\Theta)=\frac1{n_\Theta},
\]
where $q_\Theta=1$ if $r=1$.
In particular, the $\beta$-mass and the coefficient at the attaching end
of the chain are controlled by the same index $n_\Theta$.
We record the following elementary index identity.  Suppose
that a chain $\Theta^+$ is obtained from $\Theta$ by attaching a nonempty
tail
\[
  D_1+\cdots+D_s,
  \qquad d_j:=-D_j^2,
\]
at $\Gamma_r$.  Put
\[
  n_+:=n_{\Theta^+},\qquad q_+:=q_{\Theta^+},\qquad
  c(\Theta^+/\Theta):=[d_2,\ldots,d_s],
\]
where the last expression is $1$ if $s=1$.  
A direct calculation gives
\begin{equation}
  n_\Theta q_+-q_\Theta n_+
  =c(\Theta^+/\Theta).
  \label{eq:index-tail-determinant-identity}
\end{equation}
Consequently,
\begin{equation}
  \beta_{\cF'}(\Theta^+)-\beta_{\cF}(\Theta)
  =\frac{q_+}{n_+}-\frac{q_\Theta}{n_\Theta}
  =\frac{c(\Theta^+/\Theta)}{n_\Theta n_+}>0.
  \label{eq:beta-increment-by-old-chain-index}
\end{equation}
Notice that the numerator only records the tail added by the cluster,
whereas the denominator contains the index $n_\Theta$ of the original
chain.
For a $(\Delta,\cF)$-chain $\Theta$, its
\emph{boundary-weighted $\beta$-mass} is
\begin{equation}
  \beta_\Delta(\Theta)
  :=\bigl(1-\theta_\Delta(\Theta)\bigr)^2
    \beta_{\cF}(\Theta)
  =-M(\Delta,\Theta)^2.
  \label{eq:boundary-weighted-beta-square}
\end{equation}
Let
\[
  \varrho_p\colon(X',\cF',\Delta')\longrightarrow(X,\cF,\Delta)
\]
be a cluster.  Let $M_p^-$ and $M_p^+$ be, respectively, the maximal-chain
contributions removed from the old negative part and inserted in the new one;
either is set equal to zero when absent.  The formulas of
Propositions~\ref{prop:simple-regular-clusters} and
\ref{prop:singular-or-multiple-regular-centers} take the uniform form
\[
  N(\Delta')
  =\varrho_p^*\bigl(N(\Delta)-M_p^-\bigr)+M_p^+.
\]
The maximal-chain decomposition gives
\[
  \bigl(N(\Delta)-M_p^-\bigr)\cdot M_p^-=0,
  \qquad
  \varrho_p^*\bigl(N(\Delta)-M_p^-\bigr)\cdot M_p^+=0.
\]
Consequently,
\begin{equation}
  \alpha_\Delta(p)
  =(M_p^-)^2-(M_p^+)^2.
  \label{eq:adjoint-zariski-index-uniform}
\end{equation}
Thus $\alpha_\Delta(p)$ is the inserted boundary-weighted $\beta$-mass
minus the removed boundary-weighted $\beta$-mass.
\begin{proposition}[Adjoint Zariski indices]
\label{prop:adjoint-zariski-indices}
For every cluster root $p$, the following hold.
\begin{enumerate}
  \item If $p$ is simple regular, then $M_p^-=0$ and
  $M_p^+=M(\Delta',\Theta^+)$.  Hence
  \begin{equation}
    \alpha_\Delta(p)
    =-M(\Delta',\Theta^+)^2
    =\frac{q_+}{n_+}
    =\frac{t_p}{t_p+1},
    \qquad (n_+,q_+)=(t_p+1,t_p).
    \label{eq:alpha-simple-regular}
  \end{equation}
  \item Suppose that $p$ is multiple regular.  If the cluster creates no
  maximal chain, then $\alpha_\Delta(p)=0$.  Otherwise,
  \[
    \alpha_\Delta(p)
    =\beta_{\Delta'}(\Theta^+)
    =\bigl(1-\theta_{\Delta'}(\Theta^+)\bigr)^2
      \frac{q_+}{n_+}>0,
  \]
  where $n_+=n_{\Theta^+}$ and $q_+=q_{\Theta^+}$.
  \item Suppose that $p$ is singular.  If no potential curve occurs, then
  $\alpha_\Delta(p)=0$.  In the three exceptional cases, the index is as
  follows.
  \begin{enumerate}
    \item[\rm(E1)] Write $\Theta=\Gamma$,
    $n_\Theta=e=-\Gamma^2\geq2$, and
    $\theta=\Delta\cdot\Gamma\in[1/2,1)$.  If
    $n_+=n_{\Theta^+}$ and $q_+=q_{\Theta^+}$, then
    \begin{equation}
      \begin{aligned}
      \alpha_\Delta(p)
      &=\frac{q_+}{n_+}
        -\frac{(1-\theta)^2}{n_\Theta}\\
      &\geq\frac{1}{n_\Theta+1}
        -\frac{(1-\theta)^2}{n_\Theta}
       \geq\frac{3n_\Theta-1}
                    {4n_\Theta(n_\Theta+1)}>0.
      \end{aligned}
      \label{eq:alpha-E1-beta}
    \end{equation}
    \item[\rm(E2)] Put
    \[
      \theta^+
      :=\theta_{\Delta'}(\Theta^+)
      =\Delta\cdot\Gamma-d_p(\Gamma)\in[0,1).
    \]
    Since no old chain contribution is removed,
    \[
      \alpha_\Delta(p)
      =\beta_{\Delta'}(\Theta^+)
      =(1-\theta^+)^2\frac{q_+}{n_+}>0,
    \]
    where $n_+=n_{\Theta^+}$ and $q_+=q_{\Theta^+}$.
    \item[\rm(E3)] Let
    $\Theta=\Gamma_1+\cdots+\Gamma_r$ be the old maximal chain, put
    $e_i=-\Gamma_i^2$, and set $\theta:=\theta_\Delta(\Theta)$. 
    Let $\bar\Theta$ be the strict transform of $\Theta$.  Write
\[
  \Theta^+=\bar\Theta+D_1+\cdots+D_s,
  \qquad
  d_j:=-D_j^2,
\]
    in the order of the chain, and put
    \[
      n_\Theta=[e_1,\ldots,e_r],\qquad
      n_+=n_{\Theta^+},\qquad
      c_p:=[d_2,\ldots,d_s],
    \]
    with $c_p=1$ if $s=1$.  Then
    \begin{equation}
      \begin{aligned}
      \alpha_\Delta(p)
      &=(1-\theta)^2
        \bigl(\beta_{\cF'}(\Theta^+)-\beta_{\cF}(\Theta)\bigr)\\
      &=(1-\theta)^2\frac{c_p}{n_\Theta n_+}>0.
      \end{aligned}
      \label{eq:alpha-E3-beta}
    \end{equation}
    Equivalently, if $\mathcal Q_p$ denotes the singularities of
$\Theta^+$ lying on the added tail away from its attaching point to
$\bar\Theta$, then
    \[
      \sum_{q\in\mathcal Q_p}\beta_q(\cF')
      =\frac{c_p}{n_\Theta n_+},
      \qquad
      \alpha_\Delta(p)
      \geq\frac{(1-\theta)^2}{n_\Theta n_+}.
    \]
  \end{enumerate}
\end{enumerate}
In particular,
\[
  \alpha_\Delta(p)\geq0
  \qquad\text{for every }p\in\sS_\rho.
\]
\end{proposition}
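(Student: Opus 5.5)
The plan is to derive every case from the uniform identity \eqref{eq:adjoint-zariski-index-uniform}, $\alpha_\Delta(p)=(M_p^-)^2-(M_p^+)^2$, together with the chain formulas $-M(\Delta,\Theta)^2=(1-\theta_\Delta(\Theta))^2q_\Theta/n_\Theta$ from \eqref{eq:MTheta-square} and \eqref{eq:MDeltaTheta}. For each root type, I will identify $M_p^\pm$ using Propositions~\ref{prop:simple-regular-clusters} and \ref{prop:singular-or-multiple-regular-centers}, and then evaluate.

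\emph{Simple regular root.} Proposition~\ref{prop:simple-regular-clusters} gives $M_p^-=0$. It also gives $M_p^+=M(\Delta',\Theta^+)$ with $\Theta^+$ of type $[2,\dots,2]$ of length $t_p$ and $\theta_{\Delta'}(\Theta^+)=0$. The continuant of $t_p$ twos is $t_p+1$, so $(n_+,q_+)=(t_p+1,t_p)$, which yields \eqref{eq:alpha-simple-regular}.

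\emph{Multiple regular root, and cases (E2) and no potential curve.} Part (1) of Proposition~\ref{prop:singular-or-multiple-regular-centers} gives $M_p^-=0$, and $M_p^+$ is either $0$ or $M(\Delta',\Theta^+)$. The claimed value follows at once. Positivity holds because $\theta<1$ and $q_+\ge1$. The (E2) case and the case with no potential curve are treated in the same way, again using part (2).

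\emph{Case (E3).} Here $M_p^-=M(\Delta,\Theta)$ and $M_p^+=M(\Delta',\Theta^+)$, with the same $\theta$. Hence $\alpha_\Delta(p)=(1-\theta)^2(\beta_{\cF'}(\Theta^+)-\beta_{\cF}(\Theta))$. Then \eqref{eq:beta-increment-by-old-chain-index} gives the closed form, and $c_p\ge1$ gives the lower bound. For the equivalent local statement, I apply \eqref{eq:chain-beta-mass} to both chains. The singularities of $\bar\Theta$ keep their eigenvalue ratios, since blow-ups away from them do not change them. The singularity $\bar\Theta\cap\bar\Gamma$ corresponds to the old attaching point $q=\Gamma\cap\Gamma_r$, which was not on $\Theta$'s own singular set count. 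This bookkeeping needs care: I must check that the singularity at the junction of $\Gamma_r$ and $\Gamma$ was not previously counted in $\beta_{\cF}(\Theta)$. It was not, because $\Theta$ was maximal with $\mathrm Z(\cF,\Gamma_r)=2$ and that point lies on $\Gamma\not\subset\Theta$. I expect this to be a minor obstacle, handled by matching \eqref{eq:chain-beta-mass} to the tail terms after accounting for that point.

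\emph{Case (E1).} This is the main computation. We have $M_p^-=M(\Delta,\Gamma)$ with $n_\Theta=e$, $q_\Theta=1$, so $(M_p^-)^2=-(1-\theta)^2/e$. Also $M_p^+=M(\Theta^+)$ with $\theta_{\Delta'}(\Theta^+)=0$, giving the equality. For the first inequality I need $q_+/n_+\ge 1/(e+1)$. I will write $\Theta^+=\bar\Gamma+\Xi$. Since $\bar\Gamma^2\le -e-1$ after blowing up $p\in\Gamma$, and the tail is nonempty or $\bar\Gamma^2$ drops, I get $n_+=e'q_+-[\text{tail minus first}]$ with $e'=-\bar\Gamma^2$. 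Then I will use $q_+/n_+=\beta_{\cF'}(\Theta^+)$. Via \eqref{eq:chain-beta-mass}, the strict transform $\bar\Gamma$ carries a singularity over $p$ which, if the tail is empty, gives $n_+=e+1$ (a single blow-up at the transverse point). In general I will compare by monotonicity of $\beta$ in the chain, as in \eqref{eq:beta-increment-by-old-chain-index}, against the one-component chain $[e+1]$. The second inequality is elementary: with $(1-\theta)^2\le1/4$ one gets $\frac1{e+1}-\frac1{4e}=\frac{3e-1}{4e(e+1)}$. The final claim $\alpha_\Delta(p)\ge0$ then follows by collecting the cases.
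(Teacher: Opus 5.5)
Your reduction of every case to $\alpha_\Delta(p)=(M_p^-)^2-(M_p^+)^2$ is the paper's route. The simple regular, multiple regular, (E2) and main (E3) identities go through as you describe.

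\textbf{The gap in (E1).} The genuine gap is the first inequality in (E1). Let $e'=-\bar\Gamma^2$ be the first entry of $\Theta^+=\bar\Gamma+\Xi$. The continuant recursion (equivalently \eqref{eq:beta-increment-by-old-chain-index} applied to the one-component chain $[e']$) yields only $q_+/n_+\geq 1/e'$. So what you need is $e'\leq e+1$. You record instead $\bar\Gamma^2\leq -e-1$, which is the wrong direction. If a second center of the cluster lay on the strict transform of $\Gamma$, one could have, for example, $\Theta^+=\bar\Gamma$ with $q_+/n_+=1/(e+2)<1/(e+1)$. Your comparison ``against $[e+1]$'' and your claim that an empty tail means a single blow-up both presuppose the point at issue.

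The missing argument, which is the one the paper gives, runs as follows. Since $\Delta\cdot\Gamma=\theta<1$ and all coefficients are at least $1/2$, exactly one boundary component passes through $p$. It meets $\Gamma$ only at $p$, and it is smooth and transverse to $\Gamma$ there. Hence after the first blow-up its strict transform meets $E_1$ away from $\bar\Gamma\cap E_1$. Every later center of the cluster lies on the strict transform of the boundary, so none lies on $\bar\Gamma$. Therefore $\bar\Gamma^2=-(e+1)$ exactly, which gives $n_+\leq(e+1)q_+$. The rest of your (E1) computation, including $\frac1{e+1}-\frac1{4e}=\frac{3e-1}{4e(e+1)}$, is then correct.

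\textbf{The bookkeeping error in (E3).} A smaller error concerns the local reformulation in (E3). The junction point $q=\Gamma\cap\Gamma_r$ lies on $\Gamma_r\subset\Theta$, so it is one of the points of $\Theta\cap\Sing(\cF)$ summed in \eqref{eq:chain-beta-mass}. For $r=1$ it is the only such point, with $\beta_q(\cF)=1/e_1=\beta_{\cF}(\Theta)$. So, contrary to what you assert, it \emph{is} counted in $\beta_{\cF}(\Theta)$.

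The correct bookkeeping is this. All singularities on $\Theta$, including $q$, lie away from the centers over $p$ and keep their eigenvalue ratios. They are counted on both sides and cancel, so $\beta_{\cF'}(\Theta^+)-\beta_{\cF}(\Theta)=\sum_{q'\in\mathcal Q_p}\beta_{q'}(\cF')$. With your accounting you would instead get $\sum_{q'\in\mathcal Q_p}\beta_{q'}(\cF')=c_p/(n_\Theta n_+)-\beta_q(\cF)$, which contradicts the statement.
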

\begin{proof}
Formula~\eqref{eq:adjoint-zariski-index-uniform} gives all the asserted
identities after applying
\eqref{eq:boundary-weighted-beta-square}.  For a simple regular cluster,
the $[2,\ldots,2]$-string computed
in Proposition~\ref{prop:simple-regular-clusters} satisfies
\[
  \beta_{\cF'}(\Theta^+)
  =-M(\Delta',\Theta^+)^2
  =\frac{t_p}{t_p+1}.
\]
The multiple regular case and case \rm(E2) follow because a nonempty
Hirzebruch--Jung chain has positive $\beta$-mass.
In case \rm(E1), the boundary assumptions imply that it meets $\Gamma$
only at $p$ and transversely.  Hence $\bar\Gamma\cap E_1$ is not a later
cluster center, and the new chain begins with
$\bar\Gamma^2=-(n_\Theta+1)$.  The continuant recursion gives
\[
  n_+\leq(n_\Theta+1)q_+,
  \qquad\text{hence}\qquad
  \frac{q_+}{n_+}\geq\frac1{n_\Theta+1}.
\]
Together with $(1-\theta)^2\leq1/4$, this proves
\eqref{eq:alpha-E1-beta}.
Finally, in case \rm(E3), the old singularities and the boundary weight are
unchanged.  Hence \eqref{eq:chain-beta-mass} and
\eqref{eq:beta-increment-by-old-chain-index} give
\eqref{eq:alpha-E3-beta} and its equivalent local formula involving
$\mathcal Q_p$.
\end{proof}
\begin{remark}[The one-blow-up subcase]
Suppose that a singular cluster consists of a single blow-up, and set
$e=-\Gamma^2$, where $\Gamma$ is the corresponding potential curve.
In case \rm(E3), write the old chain as
$\Theta=\Gamma_1+\cdots+\Gamma_r$, set $e_i=-\Gamma_i^2$, let
$n_\Theta$ be its index, and put
$n_+=n_{\Theta^+}=[e_1,\ldots,e_r,e+1]$.  The preceding formulas reduce
respectively to
\[
  \begin{array}{ll}
  \mathrm{(E1)}&\displaystyle
  \alpha_\Delta(p)
  =\frac{1}{n_+}-\frac{(1-\theta)^2}{n_\Theta},
  \quad (n_\Theta,n_+)=(e,e+1),\\[3mm]
  \mathrm{(E2)}&\displaystyle
  \alpha_\Delta(p)
  =\frac{(1-\theta^+)^2}{n_+},
  \quad n_+=e+1,\\[3mm]
  \mathrm{(E3)}&\displaystyle
  \alpha_\Delta(p)
  =\frac{(1-\theta)^2}{n_\Theta n_+}.
  \end{array}
\]
These are precisely the one-blow-up formulas. 
\end{remark}
If $p$ is simple regular and $a$ is the coefficient of the unique component
of $\Delta$ through $p$, Definition~\ref{def:local-volume-correction} and
\eqref{eq:alpha-simple-regular} give
\begin{equation}
  T_1(p)=\frac{t_p}{t_p+1}-(1-a)^2.
  \label{eq:T1-simple-regular}
\end{equation}

\subsection{The global cluster formula and local positivity}
\label{subsec:global-cluster-formula}
We now sum the cluster corrections along a cluster-adapted resolution.
\begin{theorem}[Cluster formula for the volume]
\label{thm:cluster-volume-formula}
Let $(X_0,\cF_0,\Delta_0)$ be a triple such that
$(X_0,\cF_0)$ is relatively minimal and
$\nu(\cF_0)\geq0$.
Write
\[
  \Delta_0=\sum_{j=1}^s a_jC_{0,j},
\]
and let
\[
  \rho\colon(X,\cF,\Delta)
  \longrightarrow(X_0,\cF_0,\Delta_0)
\]
be a cluster-adapted resolution.  Then
\begin{equation}
  \vol(K_{\cF}+\Delta)
  =\sV(\cF_0,\Delta_0)
   +\sum_{p\in\sS_\rho}T_1(p).
  \label{eq:global-cluster-volume-formula}
\end{equation}
Equivalently,
\begin{equation}
  \begin{aligned}
  \vol(K_{\cF}+\Delta)
  ={}&K_{\cF_0}^2
     +\sum_{j=1}^sa_j(2-a_j)K_{\cF_0}\cdot C_{0,j}
     -N(\Delta_0)^2\\
    &+\sum_{p\in\sS_\rho}T_1(p).
  \end{aligned}
  \label{eq:global-cluster-volume-formula-expanded}
\end{equation}
\end{theorem}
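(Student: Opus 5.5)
The argument is a telescoping sum of Proposition~\ref{prop:cluster-transformation-rule} along the clusters of $\rho$, followed by the tangency-free identification at the terminal model. Write $\rho$ as the composition $\varrho_{p_r}\circ\cdots\circ\varrho_{p_1}$ of complete clusters, with intermediate triples $(X_i,\cF_i,\Delta_i)$ and $(X_r,\cF_r,\Delta_r)=(X,\cF,\Delta)$.

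First I will check that the hypotheses needed at every stage hold. Since $(X_0,\cF_0)$ is relatively minimal, the initial triple is irredundant. Corollary~\ref{cor:cluster-intermediate-models-irredundant} then shows that every $(X_i,\cF_i,\Delta_i)$ is irredundant with $K_{\cF_i}$ pseudo-effective. Each $K_{\cF_i}+\Delta_i$ is pseudo-effective, indeed big. For the terminal model this is the standing assumption; for an intermediate model $\rho_{i*}(K_{\cF}+\Delta)=K_{\cF_i}+\Delta_i$, and the pushforward of a big divisor is big. Hence the Zariski decompositions exist and $\sV(\cF_i,\Delta_i)$ is defined at every stage. This is exactly what is used in the proof of Proposition~\ref{prop:cluster-transformation-rule}, namely the single blow-up identity $\vol(K_{\cF_1}+\Delta_1)-\vol(K_{\cF}+\Delta)=-(s_q-\mu_q)^2+N(\Delta)^2-N(\Delta_1)^2$. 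I will note briefly that this identity follows from $\sigma^*P(\Delta)$ being the positive part upstairs when the exceptional coefficient is $\le 0$, and in general from $P(\Delta_1)^2=(K_{\cF_1}+\Delta_1)^2-N(\Delta_1)^2$ together with $(\sigma^*D+cE)^2=D^2-c^2$.

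Next, applying \eqref{eq:cluster-transformation-rule} to the cluster rooted at $p_i$, with all local quantities computed on $(X_{i-1},\cF_{i-1},\Delta_{i-1})$, gives
\[
\sV(\cF_i,\Delta_i)=\sV(\cF_{i-1},\Delta_{i-1})+T_1(p_i),\qquad i=1,\dots,r.
\]
Summing over $i$ telescopes to $\sV(\cF_r,\Delta_r)=\sV(\cF_0,\Delta_0)+\sum_{p\in\sS_\rho}T_1(p)$.

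Finally, by the definition of a cluster-adapted resolution, $\tang(\cF_r,\Delta_{r,\red})=0$. So \eqref{eq:weighted-tangency-vanishes} gives $\tau(\cF_r,\Delta_r)=0$, and therefore $\sV(\cF_r,\Delta_r)=\vol(K_{\cF}+\Delta)$. This is \eqref{eq:global-cluster-volume-formula}. The expanded form \eqref{eq:global-cluster-volume-formula-expanded} then follows by substituting \eqref{eq:assembled-volume-expanded} for $\sV(\cF_0,\Delta_0)$.

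The only delicate point is the bookkeeping between the two expressions for $\sV$. The expanded identity \eqref{eq:assembled-volume-expanded} requires $\vol(K_{\cF}+\Delta)=(K_{\cF}+\Delta)^2-N(\Delta)^2$. This holds because $P(\Delta)\cdot N(\Delta)=0$, which in turn needs pseudo-effectivity. It also requires expanding $(K_{\cF}+\Delta)^2-\tau$ using $\Delta^2$ and $K_{\cF}\cdot\Delta=\sum a_jK_{\cF}\cdot C_j$. Both steps are routine once pseudo-effectivity at every intermediate stage is secured, so I expect no genuine obstacle beyond the irredundancy propagation already established.
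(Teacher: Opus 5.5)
Your proposal is correct and follows the paper's proof. Like the paper, you telescope Proposition~\ref{prop:cluster-transformation-rule} over the complete clusters of $\rho$, use $\tang(\cF,\Delta_{\red})=0\Rightarrow\tau(\cF,\Delta)=0$ to identify $\sV(\cF,\Delta)$ with $\vol(K_{\cF}+\Delta)$, and substitute \eqref{eq:assembled-volume-expanded} to get the expanded form. There is one harmless slip: $\sigma^*P(\Delta)$ is the positive part upstairs when the exceptional coefficient is $\geq 0$, not $\leq 0$. Your general computation via $\vol(D)=D^2-N(D)^2$ and $(\sigma^*D+cE)^2=D^2-c^2$ handles every sign anyway.
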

\begin{proof}
Iterating Proposition~\ref{prop:cluster-transformation-rule} gives
\[
  \sV(\cF,\Delta)
  =\sV(\cF_0,\Delta_0)
   +\sum_{p\in\sS_\rho}T_1(p).
\]
Since $\tang(\cF,\Delta_{\red})=0$, one has
$\tau(\cF,\Delta)=0$ and therefore
$\sV(\cF,\Delta)=\vol(K_{\cF}+\Delta)$.  This proves
\eqref{eq:global-cluster-volume-formula}; the expanded formula follows from
\eqref{eq:assembled-volume-expanded}.
\end{proof}
\begin{corollary}[Positivity of the local terms]
\label{cor:T1-positive}
For every $p\in\sS_\rho$, one has $T_1(p)\geq0$.  More precisely:
\begin{enumerate}
  \item if $\ell_p=1$, then
  \[
    T_1(p)=\alpha_\Delta(p)\geq0.
  \]
  In particular, $T_1(p)>0$ if and only if a potential curve occurs.
  \item if $\ell_p=0$ and $m_p\geq2$, then
  \[
    T_1(p)\geq\frac34m_p-1\geq\frac12;
  \]
  \item if $p$ is simple regular, then
  \[
    T_1(p)=\frac{t_p}{t_p+1}-(1-a)^2\geq\frac14,
  \]
  where $a$ is the coefficient of the unique component of $\Delta$ through
  $p$.
\end{enumerate}
Consequently,
\[
  \vol(K_{\cF}+\Delta)
  \geq\sV(\cF_0,\Delta_0)
  \geq0.
\]
\end{corollary}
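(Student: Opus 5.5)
The plan is to read off each case from the definition \eqref{eq:T1-definition} of $T_1(p)$, combined with the values of $\alpha_\Delta(p)$ in Proposition~\ref{prop:adjoint-zariski-indices}. The global inequality then follows from the cluster formula of Theorem~\ref{thm:cluster-volume-formula} together with Proposition~\ref{prop:assembled-volume-nonnegative}.

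\emph{Singular roots.} If $\ell_p=1$, the second term in \eqref{eq:T1-definition} vanishes, so $T_1(p)=\alpha_\Delta(p)$. Part~(3) of Proposition~\ref{prop:adjoint-zariski-indices} then gives $\alpha_\Delta(p)=0$ when no potential curve occurs, and a strictly positive value in each of (E1)--(E3). This yields~(1), including the ``if and only if'' statement.

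\emph{Multiple regular roots.} If $\ell_p=0$ and $m_p\geq2$, then $\alpha_\Delta(p)\geq0$ by part~(2) of that proposition. For the remaining term, note that $a\mapsto a(2-a)$ is increasing on $[1/2,1)$ with value $3/4$ at $a=1/2$. Hence
\[
\sum_j a_j(2-a_j)\mult_p(C_j)\;\geq\;\tfrac34\sum_j\mult_p(C_j)\;=\;\tfrac34 m_p .
\]
Therefore $T_1(p)\geq \frac34 m_p-1\geq\frac12$ when $m_p\geq2$.

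\emph{Simple regular roots.} Formula \eqref{eq:T1-simple-regular} gives $T_1(p)=t_p/(t_p+1)-(1-a)^2$. Since $t_p\geq1$, the first term is at least $1/2$. Since $a\geq1/2$, we have $(1-a)^2\leq1/4$. Hence $T_1(p)\geq1/4$.

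\emph{Global bound.} Summing over $\sS_\rho$, Theorem~\ref{thm:cluster-volume-formula} gives $\vol(K_{\cF}+\Delta)\geq\sV(\cF_0,\Delta_0)$. The starting triple is irredundant because $(X_0,\cF_0)$ is relatively minimal, and $\nu(\cF_0)\geq0$. So Proposition~\ref{prop:assembled-volume-nonnegative} applies and gives $\sV(\cF_0,\Delta_0)\geq0$.

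The only point requiring care is that Proposition~\ref{prop:adjoint-zariski-indices} applies at every stage. For this I would cite Corollary~\ref{cor:cluster-intermediate-models-irredundant}: every intermediate triple is irredundant and has pseudo-effective foliated canonical divisor. Given that, the computation itself is routine.
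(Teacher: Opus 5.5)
Your proposal is correct and follows the paper's proof essentially verbatim. Case~(1) comes from Proposition~\ref{prop:adjoint-zariski-indices}, case~(2) from $\alpha_\Delta(p)\geq0$ together with $a_j(2-a_j)\geq3/4$, case~(3) from \eqref{eq:T1-simple-regular} with $t_p\geq1$ and $a\geq1/2$, and the global bound from Theorem~\ref{thm:cluster-volume-formula} combined with Proposition~\ref{prop:assembled-volume-nonnegative}; your explicit appeal to Corollary~\ref{cor:cluster-intermediate-models-irredundant}, to justify using the index computations at every stage, makes explicit a step the paper leaves implicit.
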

\begin{proof}
The first assertion follows from
Proposition~\ref{prop:adjoint-zariski-indices}.  For a multiple regular root,
$a_j(2-a_j)\geq3/4$ for every component through $p$, and hence
\[
  T_1(p)
  \geq\frac34\sum_j\mult_p(C_j)-1
  =\frac34m_p-1.
\]
For a simple regular root, use \eqref{eq:T1-simple-regular}, together with
$t_p\geq1$ and $a\geq1/2$.  The final inequalities follow from
Theorem~\ref{thm:cluster-volume-formula} and
Proposition~\ref{prop:assembled-volume-nonnegative}.
\end{proof}
\begin{remark}[Standard coefficients]
If $a_j=1-1/n_j$, then
\[
  a_j(2-a_j)=1-\frac1{n_j^2}
\]
and
\[
  T_1(p)
  =\alpha_\Delta(p)
   +(1-\ell_p)
    \left(
      \sum_j\left(1-\frac1{n_j^2}\right)\mult_p(C_j)-1
    \right).
\]
For a simple regular root lying on a component of coefficient $1-1/n$, this
reduces to
\[
  T_1(p)=\frac{t_p}{t_p+1}-\frac1{n^2}.
\]
\end{remark}

\section{Adjoint volume and birational automorphism bounds}
\label{sec:volume-and-automorphism-bounds}
In this section we prove the three cases of
Theorem~\ref{thm:adjoint-volume-bounds} and then apply them to the
quotient pair furnished by Corollary~\ref{cor:Galois-cover}.
Let $\cH$ be a canonical foliation on a smooth projective surface $Z$.
For $r\geq1$, recall that its $r$-th \emph{pluricanonical index} is
\begin{equation}
  \delta_r(\cH)
  :=
  \min\bigl\{
    m\in\mathbb Z_{>0}
    \mid h^0(Z,mK_{\cH})\geq r
  \bigr\},
  \label{eq:pluricanonical-section-index}
\end{equation}
where $\min\varnothing:=\infty$.  
(For a foliation that is not necessarily canonical, its
pluricanonical indices are computed on any canonical birational model.)
For the cases $\kappa(\cF)=0$ and $1$, let $(X,\cF,\Delta)$ be the
pair in Theorem~\ref{thm:adjoint-volume-bounds}.  
By the reduction at the beginning of
Section~\ref{sec:cluster-formula}, we may assume that it is the output
of a cluster-adapted resolution
\[
  \rho\colon
  (X,\cF,\Delta)
  \longrightarrow
  (X_0,\cF_0,\Delta_0),
  \qquad
  \Delta=\rho_*^{-1}\Delta_0,
\]
as in Theorem~\ref{thm:cluster-volume-formula}, where the target is
relatively minimal.  The pluricanonical indices and
Kodaira dimension are birationally invariant, so
\[
  \delta_r(\cF_0)=\delta_r(\cF),
  \qquad
  \kappa(\cF_0)=\kappa(\cF).
\]
We write
\[
  \Delta=\sum_i a_iC_i,
  \qquad
  \Delta_0=\sum_i a_iC_{0,i},
  \qquad
  C_{0,i}:=\rho_*C_i.
\]
\subsection{Kodaira dimension zero}
\label{subsec:kappa-zero-volume-bound}
We begin with the numerically trivial case.  It is the basic case to
which the general Kodaira-dimension-zero case will be reduced by the
canonical cyclic cover.
\begin{lemma}\label{lem:numerically-trivial-case}
Assume that $K_{\cF_0}\equiv0$. Then
\[
\vol(K_{\cF}+\Delta)\geq\frac14.
\]
\end{lemma}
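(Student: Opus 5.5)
We apply the cluster formula of Theorem~\ref{thm:cluster-volume-formula} to the cluster-adapted resolution $\rho\colon(X,\cF,\Delta)\to(X_0,\cF_0,\Delta_0)$. Since $K_{\cF_0}\equiv0$, the Zariski decomposition of $K_{\cF_0}$ is $P\equiv0$, $N=0$. In Proposition~\ref{prop:assembled-volume-nonnegative} the terms $P^2$ and $\sum_ja_j(2-a_j)P\cdot C_{0,j}$ therefore vanish, and $R(\cF_0,\Delta_0)=-N(\Delta_0)^2$. Because $N=0$ and $0\le N(\Delta_0)\le N_{\mathrm{ch}}\le N$ (as in the proof of Corollary~\ref{cor:R(F,D)}), we get $N(\Delta_0)=0$. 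This can also be seen directly: for any $\cF_0$-chain, $K_{\cF_0}\cdot\Gamma_1=-1$ contradicts $K_{\cF_0}\equiv0$. Hence $\sV(\cF_0,\Delta_0)=0$, and
\[
  \vol(K_{\cF}+\Delta)=\sum_{p\in\sS_\rho}T_1(p).
\]
It therefore suffices to produce one cluster root with $T_1(p)\ge\frac14$. By Corollary~\ref{cor:T1-positive}, every regular root (simple or multiple) contributes at least $\frac14$. So it is enough to show that some regular root exists, or else to handle the case where all roots are singular.

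\textbf{Step 1: some cluster is needed.} Suppose $\Delta_0$ were already tangency-free, so that $\sS_\rho=\varnothing$. Then $\vol(K_{\cF}+\Delta)=\sV(\cF_0,\Delta_0)=0$, which contradicts the bigness of $K_{\cF}+\Delta$. The same contradiction arises if every root has $T_1=0$. So some root has $T_1(p)>0$.

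\textbf{Step 2: excluding singular-only contributions.} A singular root has $T_1(p)=\alpha_\Delta(p)>0$ only when a potential curve $\Gamma$ occurs. Every potential curve satisfies $K_{\cF_0}\cdot\Gamma\in\{-1,0\}$; cases (E1) and (E2) require $\mathrm Z(\cF,\Gamma)=1$, i.e.\ $K\cdot\Gamma=-1$. On the first model $X_0$ this is impossible since $K_{\cF_0}\equiv0$. Case (E3) needs an old maximal chain, and there are none. Intermediate models differ from $X_0$ only through earlier clusters. The pullback formula $K_{\cF'}=\varrho_p^*K_{\cF}+(1-\ell_p)\cE_1$ shows that, while only singular clusters have been performed, the foliated canonical divisor stays numerically trivial and no chains exist. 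Hence the \emph{first} root with positive $T_1$ must be regular, and it contributes at least $\frac14$. This gives $\vol(K_{\cF}+\Delta)\ge\frac14$.

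\textbf{Main obstacle.} The delicate point is ordering. The paper performs all singular clusters first, and one must verify that this is compatible with the argument in Step~2. Once singular clusters are done, the boundary avoids $\Sing(\cF)$, and later roots are regular. So either some regular root occurs, giving the bound $\frac14$, or all roots are singular with $T_1=0$ by Step~2, which contradicts bigness as in Step~1. I would carefully check the claim that $K_{\cF_i}\equiv0$ persists through singular clusters (here $\ell=1$ gives a crepant pullback) and that no potential curve can arise there.
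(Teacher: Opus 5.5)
Your proof is correct and follows the paper's argument. The paper argues exactly this way: $P\equiv0$ and $N=0$ force $N(\Delta_0)=0$ and $\sV(\cF_0,\Delta_0)=0$; singular roots contribute $T_1(p)=0$ because no potential curves occur; and bigness then forces a regular root, which contributes at least $\tfrac14$. Your extra check is a useful refinement: since singular clusters are performed first and are crepant, $K_{\cF_i}$ stays numerically trivial on those intermediate models, so no chains or potential curves can appear there. This fills in what the paper only asserts as ``there are no potential curves.''
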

\begin{proof}
Since $(X_0,\cF_0)$ is relatively minimal and $K_{\cF_0}\equiv0$, the
Zariski decomposition of $K_{\cF_0}$ satisfies
\[
P\equiv0,\qquad N=0.
\]
Since $N(\Delta_0)\leq N$, it follows that
\[
  N(\Delta_0)=0
  \qquad\text{and}\qquad
  \sV(\cF_0,\Delta_0)=0.
\]
Moreover, there are no potential curves.  Hence
Corollary~\ref{cor:T1-positive} gives
\[
  T_1(p)=0
\]
for every singular root $p$.
The cluster formula consequently gives
\[
\vol(K_{\cF}+\Delta)
=
\sum_{\substack{p\in\sS_\rho\\ \ell_p=0}}T_1(p).
\]
Since $K_{\cF}+\Delta$ is big, the sum is nonempty. Every
regular cluster satisfies
\[
T_1(p)\geq\frac14.
\]
Hence
\[
\vol(K_{\cF}+\Delta)\geq\frac14.
\]
\end{proof}

We now pass to the general Kodaira-dimension-zero case.  The loss in
the lower bound is measured precisely by the first
pluricanonical-section index.
\begin{proposition}\label{prop:kappa-zero-index-bound}
If $\kappa(\cF)=0$, then
\[
\vol(K_{\cF}+\Delta)
\geq
\frac{1}{4\delta_1(\cF)}.
\]
\end{proposition}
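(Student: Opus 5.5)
Set $m:=\delta_1(\cF)=\delta_1(\cF_0)$ and pick a nonzero section $s\in H^0(X_0,mK_{\cF_0})$ with divisor $D_0\ge0$. Since $(X_0,\cF_0)$ is relatively minimal and $\kappa(\cF_0)=0$, we have $P\equiv0$ and $K_{\cF_0}\equiv N$, with $N$ supported on maximal $\cF_0$-chains by \eqref{eq:McQuillan-negative-part}. Hence $mN$ is the fixed divisor and $D_0=mN$ is invariant. We use $s$ to build the canonical cyclic cover of degree $m$, $\Pi\colon(Y,\cH)\to(X_0,\cF_0)$, in the sense of Pereira \cite{Per05}: normalize $X_0$ in the $m$-th root of $s$. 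The branch locus lies in $\Supp N$, so it is $\cF_0$-invariant, and $B_h=0$. Proposition~\ref{prop:foliated-Riemann-Hurwitz} then gives $K_{\cH}=\Pi^*K_{\cF_0}$ on the normal cover, and a direct check shows $K_{\cH}\sim_{\mathbb Q}0$ up to the invariant correction. Following \cite{Per05}, $\cH$ is canonical with numerically trivial canonical class. We may also take the minimality of $m$ to guarantee that $Y$ is irreducible; in any case we can pass to a connected component of degree $d\le m$.

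Next we pull back the boundary. Set $\Delta_Y:=\Pi^*\Delta_0$. Because every $C_{0,i}$ is non-invariant and the cover is branched only along invariant curves, $\Pi^*\Delta_0$ is the reduced preimage with the same coefficients $a_i\in[1/2,1)$. Since volume scales by degree, and since by Proposition~\ref{prop:foliated-Riemann-Hurwitz} $K_{\cH}+\Delta_Y=\Pi^*(K_{\cF_0}+\Delta_0)$,
\[
  \vol(K_{\cH}+\Delta_Y)=d\,\vol(K_{\cF_0}+\Delta_0)\le m\,\vol(K_{\cF_0}+\Delta_0).
\]
Here we want to use $\vol(K_{\cF_0}+\Delta_0)\le\vol(K_{\cF}+\Delta)$. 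Indeed, $K_{\cF}+\Delta=\rho^*(K_{\cF_0}+\Delta_0)+\sum(1-\ell_q-\mu_q)E_q$, so the pushforward comparison gives $\vol(K_{\cF}+\Delta)\ge\vol(K_{\cF_0}+\Delta_0)$, and is certainly available when the coefficients are nonnegative. Consequently, the sharper form of the argument should go as follows. Take a smooth model of $(Y,\cH)$ via Proposition~\ref{prop:tangency-free-quotient-model}, relatively minimal with numerically trivial canonical class, and perform a cluster-adapted resolution of $(Y,\cH,\Delta_Y)$. By Lemma~\ref{lem:numerically-trivial-case} the tangency-free output has adjoint volume at least $1/4$. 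Lemma~\ref{lem:tangency-free-volume} shows that this output and the cluster-adapted pullback of $(X,\cF,\Delta)$ (fibre product with the resolved cover) have the same volume. That volume equals $d\,\vol(K_{\cF}+\Delta)$ by the Galois-cover volume scaling of Corollary~\ref{cor:Galois-cover}, applied with no horizontal branch. This yields $\vol(K_\cF+\Delta)\ge 1/(4d)\ge 1/(4m)$.

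\textbf{Main obstacle.} The delicate point is compatibility: pulling back the tangency-free triple $(X,\cF,\Delta)$ along the cyclic cover has to produce a tangency-free triple over a foliation birational to a relatively minimal $\cH$ with $K\equiv0$. I would first handle this by base-changing the cover to $X$ itself, which is branched along the invariant divisor $\rho^*D_0$. Since the branch divisor is invariant, Proposition~\ref{prop:finite-cover-canonical-pullback} (the invariant-branch case) shows the resulting foliation is canonical with only crepant Hirzebruch--Jung points, and the boundary stays transverse and disjoint from singularities, so tangency-freeness is preserved. It then remains to check that the minimal resolution keeps $\tang=0$, and that passing to a relatively minimal model of the cover is allowed by Lemma~\ref{lem:tangency-free-volume} together with the cluster reduction of Section~\ref{sec:cluster-formula}.
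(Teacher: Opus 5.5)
Your final argument, meaning the ``sharper form'' together with the base change to $X$ in your last paragraph, is essentially the paper's proof. It takes the cyclic cover of degree $m$ branched only along the invariant curves $\Supp N$, takes the normalized base change to the tangency-free model $X$, and uses a crepant minimal resolution that is an isomorphism near the pulled-back boundary. Then $K+\Delta$ pulls back, the volume multiplies by the degree, and Lemma~\ref{lem:numerically-trivial-case} is applied on a relatively minimal model of the cover via Lemma~\ref{lem:tangency-free-volume}.

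Two intermediate claims are wrong and should be deleted or corrected.

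First, the inequality $\vol(K_{\cF}+\Delta)\geq\vol(K_{\cF_0}+\Delta_0)$ is backwards. Since $\rho_*(K_{\cF}+\Delta)=K_{\cF_0}+\Delta_0$, pushing forward sections always gives $\leq$. At the singular centers of a cluster the exceptional coefficients are $1-\ell_q-\mu_q=-\mu_q<0$, so your inequality fails in general. That detour is useless anyway, because $\Pi^*\Delta_0$ over $X_0$ is not tangency-free, so no lower bound is available upstairs there.

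Second, on $Y$ itself $K_{\cH}=\Pi^*K_{\cF_0}\equiv\Pi^*N$ has square $(\deg\Pi)\,N^2<0$ when $N\neq0$, so $K_{\cH}$ is not numerically trivial on $Y$. What the argument needs is $K\sim0$ on a relatively minimal model of $\cH$. As in the paper, this follows from $\omega^{\otimes m}=\Pi^*s$ (giving $h^0(K_{\cH})\geq1$), birational invariance of $h^0(K)$, $\kappa(\cH)=0$, and \cite[Lemma~8.1]{Bru15}.
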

\begin{proof}
If $K_{\cF_0}\equiv0$, then
Lemma~\ref{lem:numerically-trivial-case} gives
\[
\vol(K_{\cF}+\Delta)
\geq\frac14
\geq\frac{1}{4\delta_1(\cF)}.
\]
Suppose that $K_{\cF_0}\not\equiv0$, and set
$h:=\delta_1(\cF)=\delta_1(\cF_0)\geq2$. Choose
\[
0\neq s\in H^0(X_0,hK_{\cF_0}),
\]
which is unique up to scalar, and let
\[
\pi\colon(Y,\cH)\longrightarrow(X_0,\cF_0)
\]
be the associated canonical cyclic cover of degree $h$, where $Y$ is
normal and $\cH$ is the saturated pullback of $\cF_0$.
Let $Y'$ be the normalization of $Y\times_{X_0}X$, and denote the
induced morphisms by
\[
\pi'\colon Y'\longrightarrow X,
\qquad
\mu\colon Y'\longrightarrow Y.
\]
Let
\[
\tau\colon(\widetilde Y,\widetilde\cH)\longrightarrow(Y',\cH')
\]
be the minimal resolution with the induced reduced foliation, and set
\[
\Delta_{Y'}:=(\pi')^*\Delta,
\qquad
\widetilde\Delta_Y:=\tau_*^{-1}\Delta_{Y'}.
\]
These morphisms fit into the commutative diagram
\[
\begin{tikzcd}[column sep=large,row sep=large]
(\widetilde Y,\widetilde\cH,\widetilde\Delta_Y)
  \arrow[r,"\tau"]
  \arrow[dr,"\pi'\circ\tau"']
&
(Y',\cH',\Delta_{Y'})
  \arrow[r,"\mu"]
  \arrow[d,"\pi'"]
&
(Y,\cH)
  \arrow[d,"\pi"]
\\
&
(X,\cF,\Delta)
  \arrow[r,"\rho"']
&
(X_0,\cF_0,\Delta_0).
\end{tikzcd}
\]
Write $K_{\cF_0}=P+N$ for the Zariski decomposition. Since
$\kappa(\cF_0)=0$, we have $P\equiv0$, and
\[
(s)_0=hN.
\]
Every nonzero coefficient of $N$ is strictly smaller than one.
Consequently, the reduced branch divisor of $\pi$ is $\Supp N$ and is
therefore $\cF_0$-invariant.
The same remains true after the normalized base change: every
ramification component of $\pi'$ is $\cF$-invariant. The foliated
Riemann--Hurwitz formula gives
\[
K_{\cH'}=(\pi')^*K_{\cF},
\]
and hence
\[
K_{\cH'}+\Delta_{Y'}
=
(\pi')^*(K_{\cF}+\Delta).
\]
Invariant ramification preserves transversality to a non-invariant
boundary. Therefore
\[
\tang(\cF,\Delta_{\red})=0
\quad\Longrightarrow\quad
\tang(\cH',\Delta_{Y',\red})=0.
\]
The singularities of $Y'$ lie over intersections of invariant
ramification components and are disjoint from $\Supp\Delta_{Y'}$.
Thus $\tau$ is an isomorphism near $\Supp\Delta_{Y'}$. 
Moreover, Proposition~\ref{prop:finite-cover-canonical-pullback},
applied with $B_h=0$, shows that $\tau$ is foliated crepant.
It follows that
\[
K_{\widetilde\cH}=\tau^*K_{\cH'},
\qquad
\widetilde\Delta_Y=\tau^*\Delta_{Y'},
\]
and consequently
\[
K_{\widetilde\cH}+\widetilde\Delta_Y
=
(\pi'\circ\tau)^*(K_{\cF}+\Delta).
\]
We also have
\[
\tang(\widetilde\cH,\widetilde\Delta_{Y,\red})=0.
\]
Since $\pi'\circ\tau$ is generically finite of degree $h$,
\[
\vol(K_{\widetilde\cH}+\widetilde\Delta_Y)
=
h\,\vol(K_{\cF}+\Delta).
\]
In particular, $K_{\widetilde\cH}+\widetilde\Delta_Y$ is big.
By the definition of the canonical cyclic cover, there exists
$0\neq\omega\in H^0(Y,K_{\cH})$ such that
\[
\omega^{\otimes h}=\pi^*s.
\]
Thus the first plurigenus of $\cH$ is nonzero.  Let
\[
\psi\colon(\widetilde Y,\widetilde\cH)\longrightarrow(Y_{\min},\cH_{\min})
\]
be a relatively minimal reduction.  By birational invariance of the
first plurigenus for reduced foliations,
$\delta_1(\cH_{\min})=1$.
It follows from \cite[Lemma~8.1]{Bru15} that
\[
K_{\cH_{\min}}\sim0.
\]

By Lemma~\ref{lem:tangency-free-volume}, we may replace
$(\widetilde Y,\widetilde{\cH},\widetilde\Delta_Y)$ by a
cluster-adapted model over $(Y_{\min},\cH_{\min})$ without changing
its adjoint volume. Lemma~\ref{lem:numerically-trivial-case} then gives
\[
\vol(K_{\widetilde\cH}+\widetilde\Delta_Y)\geq\frac14.
\]
Combining this with the covering formula yields
\[
h\,\vol(K_{\cF}+\Delta)
=
\vol(K_{\widetilde\cH}+\widetilde\Delta_Y)
\geq\frac14.
\]
Therefore
\[
\vol(K_{\cF}+\Delta)
\geq
\frac1{4h}
=
\frac1{4\delta_1(\cF)}.
\]
\end{proof}

Pereira's classification of the possible values of the height turns
the preceding index-dependent estimate into a uniform one.
\begin{corollary}\label{cor:kappa-zero-uniform-bound}
If $\kappa(\cF)=0$, then
\[
\vol(K_{\cF}+\Delta)\geq\frac1{48}.
\]
If, moreover, $X$ is not rational or $\cF$ admits a rational first
integral, then
\[
\vol(K_{\cF}+\Delta)\geq\frac1{24}.
\]
\end{corollary}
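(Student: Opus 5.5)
The argument is a direct combination of Proposition~\ref{prop:kappa-zero-index-bound} with Pereira's classification of the possible heights of foliations of Kodaira dimension zero \cite{Per05}. Since $\kappa(\cF)=0$, Proposition~\ref{prop:kappa-zero-index-bound} already gives
\[
  \vol(K_{\cF}+\Delta)\geq\frac{1}{4\delta_1(\cF)},
\]
so both assertions reduce to upper bounds for $\delta_1(\cF)=\delta_1(\cF_0)$. By the remark after the definition of the pluricanonical indices, $\delta_1$ coincides with Pereira's height when $\kappa=0$. We may compute it on the relatively minimal model $(X_0,\cF_0)$, or on any canonical model, since $\delta_1$ is a birational invariant.

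For the first assertion, I will invoke \cite[Theorem~1]{Per05}: every foliation of Kodaira dimension zero on a smooth projective surface has height at most $12$. Hence $\delta_1(\cF)\leq12$, and $1/(4\delta_1(\cF))\geq1/48$.

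For the refined assertion, I will use the finer part of Pereira's classification. There the height $12$ (and more generally any height exceeding $6$) occurs only for foliations on rational surfaces without rational first integral, namely the specific quotients of linear foliations on abelian surfaces or tori by finite groups acting with isolated fixed points on $\mathbb P^2$-type rational quotients. In the remaining cases the height lies in $\{1,2,3,4,6\}$:
\begin{itemize}
\item when $X$ is not rational;
\item when $\cF$ admits a rational first integral (so the foliation is an isotrivial fibration, where the monodromy of the canonical bundle factors through automorphisms of an elliptic curve or through the finite cyclic actions allowed in the fibration case).
\end{itemize}
This gives $\delta_1(\cF)\leq6$, hence $\vol(K_{\cF}+\Delta)\geq1/24$.

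The main obstacle is extracting the precise statement from \cite{Per05}: one must check that the exceptional heights $8$, $10$, $12$ (or whatever values above $6$ appear) arise only for non-algebraically-integrable foliations on rational surfaces. I would first look for the explicit list of pairs (surface type, height) in Pereira's theorem and its proof, which runs through quotients of tori and of fibrations by cyclic groups. In particular, I would verify the following:
\begin{itemize}
\item for an irrational surface, the cyclic cover trivializing $K_{\cF}$ has order dividing $4$ or $6$, coming from automorphisms of an elliptic curve;
\item in the case of a rational first integral, the same elliptic-curve (or $\mathbb P^1$-bundle) symmetry bound applies.
\end{itemize}

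If the literature bound is only stated as $\delta_1\leq 6$ under exactly these hypotheses, the corollary follows immediately. No further geometric input from the cluster formula is needed beyond Proposition~\ref{prop:kappa-zero-index-bound}.
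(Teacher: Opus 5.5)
Your proposal is correct and follows the paper's proof: combine Proposition~\ref{prop:kappa-zero-index-bound} with \cite[Theorem~1]{Per05}, which gives $\delta_1(\cF)\in\{1,2,3,4,5,6,8,10,12\}$ in general and $\delta_1(\cF)\in\{1,2,3,4,6\}$ when $X$ is not rational or $\cF$ has a rational first integral; the paper also notes that both conditions are birationally invariant. The speculative verification you sketch for the refined case is unnecessary, because that refined list is stated directly in Pereira's theorem and the paper simply cites it.
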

\begin{proof}
By \cite[Theorem~1]{Per05},
\[
\delta_1(\cF)\in\{1,2,3,4,5,6,8,10,12\},
\]
so $\delta_1(\cF)\leq12$.
Proposition~\ref{prop:kappa-zero-index-bound}
therefore gives
\[
\vol(K_{\cF}+\Delta)
\geq\frac1{4\delta_1(\cF)}
\geq\frac1{48}.
\]
The additional assumptions are birationally invariant.  Thus, if $X$
is not rational or $\cF$ admits a rational first integral, then
\[
\delta_1(\cF)\in\{1,2,3,4,6\}.
\]
Hence $\delta_1(\cF)\leq6$, and consequently
\[
\vol(K_{\cF}+\Delta)\geq\frac1{24}.
\]
\end{proof}

\subsection{Kodaira dimension one}
\label{subsec:kappa-one-volume-bound}
For Kodaira dimension one, no covering construction is needed.  The
positive part of $K_{\cF_0}$ has numerical dimension one, and the
bigness of $K_{\cF_0}+\Delta_0$ forces it to meet at least one boundary
component positively.  The next lemma makes this intersection
effective in terms of $\delta_2(\cF)=\delta_2(\cF_0)$.
\begin{lemma}\label{lem:positive-intersection-nu-one}
Let $\cF_0$ be a relatively minimal foliation on a smooth projective
surface $X_0$ with $\kappa(\cF_0)=1$, and write
\[
K_{\cF_0}=P+N.
\]
If $K_{\cF_0}+\Delta_0$ is big, then $P\cdot C_{0,i}>0$ for some component
$C_{0,i}$ of $\Delta_0$. Moreover, every such component satisfies
\[
P\cdot C_{0,i}\geq\frac1m
\]
for every integer $m\geq1$ such that
\[
h^0(X_0,mK_{\cF_0})\geq2.
\]
\end{lemma}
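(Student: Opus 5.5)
The argument has two parts: first show that bigness forces some boundary component to meet $P$ positively, then use the pencil in $|mK_{\cF_0}|$ to bound $P\cdot C_{0,i}$ below.

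\emph{Existence of a positive component.} Suppose $P\cdot C_{0,i}=0$ for every $i$. Since $\kappa(\cF_0)=1$ and $K_{\cF_0}$ is pseudo-effective, $P$ is nef with $P\not\equiv0$ and $P^2=0$; indeed $P^2>0$ would make $K_{\cF_0}$ big. Then
\[
P\cdot(K_{\cF_0}+\Delta_0)=P^2+P\cdot N+P\cdot\Delta_0=0.
\]
Write $K_{\cF_0}+\Delta_0=A+E$ with $A$ ample and $E\geq0$, which is possible because it is big. Nefness of $P$ gives $P\cdot A\leq0$. The Hodge index theorem then forces $P\equiv0$, a contradiction. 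Hence some component satisfies $P\cdot C_{0,i}>0$.

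\emph{Effective bound.} Fix $m\geq1$ with $h^0(mK_{\cF_0})\geq2$ and a component $C:=C_{0,i}$ with $P\cdot C>0$. Write
\[
|mK_{\cF_0}|=|M|+F,
\]
with $M$ the mobile part and $F$ the fixed part.
\begin{itemize}
\item \textbf{The mobile part is nef and lies below $mP$.} Since $M$ is mobile, it is nef. By the standard property of the Zariski decomposition (every effective divisor in $|mD|$ dominates $mN$, so the fixed part dominates $mN$), we get $M\leq mP$, with $mP-M$ effective after subtracting $mN$ from $F$.
\item \textbf{$M$ is numerically a multiple of $P$.} We have $0\leq M^2\leq M\cdot mP\leq m^2P^2=0$, using nefness of both $M$ and $P$. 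So $M^2=M\cdot P=0$ while $M\not\equiv0$, and Hodge index gives $M\equiv\lambda P$ for some $\lambda>0$.
\item \textbf{The pencil is composed of a fibration.} The pencil $|M|$ has $M^2=0$, so it is base point free. It is therefore induced by a fibration $f\colon X_0\to B$, and $M\equiv kF_0$ with $k\geq1$ and $F_0$ a general fibre.
\item \textbf{Compare $mP$ with $M$ on $C$.} From $mP=M+(mP-M)$, with $mP-M$ effective and hence (since it is numerically $(m-\lambda)P$) satisfying $\lambda\leq m$, we get $mP\cdot C\geq M\cdot C$ because $P$ is nef.
\item \textbf{Integrality on $C$.} We have $M\cdot C=\lambda P\cdot C>0$, so $M\cdot C$ is a positive integer; the integrality comes from $M$ being an integral Cartier divisor. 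Thus $M\cdot C\geq1$.
\end{itemize}
Combining these gives $mP\cdot C\geq M\cdot C\geq1$, that is, $P\cdot C\geq 1/m$.

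The main obstacle is the justification of $M\leq mP$ together with $mP-M$ being effective. I would argue it as follows. Any $D\in|mK_{\cF_0}|$ satisfies $D\geq mN$, so $D-mN\equiv mP$ is effective. Since $M+(F-mN)\equiv mP$, both summands are effective and numerically proportional to $P$. This is exactly what the last two bullets use: $F-mN$ effective gives $mP\cdot C\geq M\cdot C$, and $\lambda\leq m$. No delicate structure beyond the Zariski decomposition and Hodge index is required.
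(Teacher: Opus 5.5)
Your proof is correct and follows essentially the same route as the paper. You show via Hodge index that the mobile part of $|mK_{\cF_0}|$ is numerically $\lambda P$, you get $\lambda\le m$ from the Zariski decomposition (your ``fixed part dominates $mN$'' is the same fact as the paper's $M_0\le P_m$ from maximality of the positive part), and you conclude by integrality of $M\cdot C_{0,i}$. The only differences are that the paper obtains $P\cdot M_0=0$ more directly from $P\cdot K_{\cF_0}=0$ and nefness of $P$, and that your fibration bullet is never used and can be dropped.
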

\begin{proof}
Since $\kappa(\cF_0)=1$, we have $\nu(\cF_0)=1$, which implies $P\not\equiv0$ and $P^2=0$.  If
$P\cdot C_{0,j}=0$ for every component of $\Delta_0$, then
\[
  (K_{\cF_0}+\Delta_0)\cdot P
  =P^2+N\cdot P+\Delta_0\cdot P=0.
\]
This contradicts the bigness of $K_{\cF_0}+\Delta_0$.
Fix $m$ with $h^0(mK_{\cF_0})\geq2$ and write
\[
  |mK_{\cF_0}|=|M|+Z,
\]
where $M$ is the movable part and $Z$ is the fixed part.  A general
member $M_0\in|M|$ is a nonzero integral nef divisor.  For
$D_m:=M_0+Z\in|mK_{\cF_0}|$, 
\[
  0=P\cdot D_m=P\cdot M_0+P\cdot Z.
\]
Hence nefness of $P$ gives $P\cdot M_0=0$.  The Hodge index theorem now yields
\[
  M_0^2=0,
  \qquad
  M_0\equiv aP,\qquad a>0.
\]
Write
\[
  D_m=P_m+N_m
\]
for the Zariski decomposition of $D_m$.  Since
$D_m\sim mK_{\cF_0}$, one has
\[
  P_m\equiv mP.
\]
Moreover, $M_0$ is nef and $D_m-M_0=Z\geq0$.  The maximality of the
positive part of the Zariski decomposition
(cf.~\cite[Lemma~2.2]{Sak84Anti}) therefore gives
\[
  M_0\leq P_m.
\]
Intersecting with an ample divisor $H$, we obtain
\[
aP\cdot H=M_0\cdot H\leq P_m\cdot H=mP\cdot H.
\]
Since $P\cdot H>0$, this implies $a\leq m$.
Let $C_{0,i}$ be any component with $P\cdot C_{0,i}>0$.
Then
$M_0\cdot C_{0,i}=aP\cdot C_{0,i}$ is a positive integer, so
\[
  P\cdot C_{0,i}
  =\frac{M_0\cdot C_{0,i}}{a}
  \geq\frac1a
  \geq\frac1m.
\]
\end{proof}
Combining this intersection estimate with the nonnegative cluster
formula gives the required adjoint volume bound.
\begin{proposition}\label{prop:kappa-one-volume-bound}
If $\kappa(\cF)=1$, then
\[
  \vol(K_{\cF}+\Delta)
  \geq
  \frac{3}{4\delta_2(\cF)}
  \geq
  \frac1{56}.
\]
\end{proposition}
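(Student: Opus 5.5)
We will combine the global cluster formula (Theorem~\ref{thm:cluster-volume-formula}) with the positive decomposition of the assembled term (Proposition~\ref{prop:assembled-volume-nonnegative}) and the intersection estimate of Lemma~\ref{lem:positive-intersection-nu-one}. By the reduction at the start of this section, $(X,\cF,\Delta)$ is the output of a cluster-adapted resolution $\rho$ over a relatively minimal triple $(X_0,\cF_0,\Delta_0)$, with $\kappa(\cF_0)=1$ and $\delta_2(\cF_0)=\delta_2(\cF)<\infty$. Since $\kappa(\cF_0)=1$, $K_{\cF_0}$ is pseudo-effective, so $\nu(\cF_0)\geq0$ and the starting triple is irredundant. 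The hypotheses of Theorem~\ref{thm:cluster-volume-formula} are therefore satisfied.

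Combining Theorem~\ref{thm:cluster-volume-formula} with \eqref{eq:assembled-volume-positive-decomposition} gives
\[
  \vol(K_{\cF}+\Delta)
  =P^2+\sum_i a_i(2-a_i)P\cdot C_{0,i}+R(\cF_0,\Delta_0)
   +\sum_{p\in\sS_\rho}T_1(p).
\]
Here $P^2=0$ because $\nu(\cF_0)=1$. Moreover, $R(\cF_0,\Delta_0)\geq0$ by Corollary~\ref{cor:R(F,D)}, and $T_1(p)\geq0$ for every root by Corollary~\ref{cor:T1-positive}. Each $P\cdot C_{0,i}\geq0$ because $P$ is nef. We may therefore discard everything except a single boundary term. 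This leaves
\[
  \vol(K_{\cF}+\Delta)\geq a_i(2-a_i)\,P\cdot C_{0,i}
\]
for any chosen index $i$.

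Since $K_{\cF}+\Delta$ is big, its pushforward $K_{\cF_0}+\Delta_0$ is big. Lemma~\ref{lem:positive-intersection-nu-one} then supplies a component $C_{0,i}$ with $P\cdot C_{0,i}>0$. Applying the lemma with $m=\delta_2(\cF_0)$, which is allowed since $h^0(mK_{\cF_0})\geq2$ by definition, gives $P\cdot C_{0,i}\geq 1/\delta_2(\cF)$. The function $a\mapsto a(2-a)$ is increasing on $[1/2,1)$, so $a_i(2-a_i)\geq 3/4$. This yields $\vol(K_{\cF}+\Delta)\geq 3/(4\delta_2(\cF))$. The uniform bound $3/(4\cdot 42)=1/56$ then follows from $\delta_2(\cF)\leq42$ \cite[Theorem~4.14]{CLNP22}.

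The only delicate point is bookkeeping. We must check that the decomposition is applied to the relatively minimal model, where $K_{\cF_0}=P+N$ is taken. We must also check that $\delta_2$ is the same on $X_0$ as on the canonical model. The paper records both facts as birational invariance at the beginning of the section. Everything else is a direct assembly of earlier results.
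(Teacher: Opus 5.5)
Your proposal is correct and matches the paper's proof: the cluster formula together with the decomposition \eqref{eq:assembled-volume-positive-decomposition} of $\sV(\cF_0,\Delta_0)$, then Lemma~\ref{lem:positive-intersection-nu-one} with $m=\delta_2(\cF_0)$, then $a_i(2-a_i)\geq 3/4$, and finally $\delta_2\leq 42$ from \cite[Theorem~4.14]{CLNP22}. One minor remark: you do not need $P^2=0$, since nefness of $P$ already gives $P^2\geq0$.
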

\begin{proof}
Set $m:=\delta_2(\cF)=\delta_2(\cF_0)$.
By Lemma~\ref{lem:positive-intersection-nu-one}, there is a component
$C_{0,i}$ of $\Delta_0$ such that
\[
  P\cdot C_{0,i}\geq\frac1m.
\]
Since $a_i\in[1/2,1)$, we have $a_i(2-a_i)\geq3/4$.
The cluster formula and the nonnegativity of its local terms therefore
give
\[
  \vol(K_{\cF}+\Delta)
  \geq\sV(\cF_0,\Delta_0)
  \geq a_i(2-a_i)P\cdot C_{0,i}
  \geq\frac{3}{4m}.
\]
Finally, \cite[Theorem~4.14]{CLNP22} gives
$m=\delta_2(\cF)\leq42$, and hence
\[
  \vol(K_{\cF}+\Delta)\geq\frac1{56}.
\]
\end{proof}

\subsection{Kodaira dimension two}
\label{subsec:kappa-two-volume-bound}
The big case requires no cluster analysis: the boundary can only
increase the volume.
\begin{proposition}\label{prop:kappa-two-volume-bound}
If $\kappa(\cF)=2$, then
\[
  \vol(K_{\cF}+\Delta)
  \geq
  \frac{1}{
    \delta_2(\cF)^2
    \bigl(1+\delta_2(\cF)\bigr)
  }.
\]
\end{proposition}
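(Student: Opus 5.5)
The plan is to deduce the bound from monotonicity of volume together with the volume estimate \eqref{equ:volume-ps-index} from \cite{Xu26}; as the introduction notes, no cluster analysis is needed in this case. We work with the triple $(X,\cF,\Delta)$ of Theorem~\ref{thm:adjoint-volume-bounds}, where $\cF$ is canonical. Since $\kappa(\cF)=2$, the divisor $K_{\cF}$ is big. We do not need the reduction to a cluster-adapted model, because $\delta_2$ and $\kappa$ are birational invariants.

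The first step is the comparison $\vol(K_{\cF}+\Delta)\geq\vol(K_{\cF})$. Since $\Delta\geq0$ is an effective $\mathbb R$-divisor, for every $m$ we have an inclusion
\[
  H^0\bigl(X,\cO_X(mK_{\cF})\bigr)\hookrightarrow
  H^0\bigl(X,\cO_X(\lfloor m(K_{\cF}+\Delta)\rfloor)\bigr),
\]
given by multiplication by a section of $\lfloor m\Delta\rfloor\geq0$. Here $mK_{\cF}$ is integral, so $\lfloor mK_{\cF}+m\Delta\rfloor=mK_{\cF}+\lfloor m\Delta\rfloor$. Taking $\limsup h^0/(m^2/2)$ gives the inequality. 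Alternatively, one can argue with Zariski decompositions: $P(K_{\cF})$ is nef and satisfies $P(K_{\cF})\leq K_{\cF}+\Delta$, so by maximality of the positive part, $P(K_{\cF})\leq P(\Delta)$. Then
\[
  P(\Delta)^2\geq P(\Delta)\cdot P(K_{\cF})\geq P(K_{\cF})^2 .
\]

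The second step is to apply \eqref{equ:volume-ps-index} to $\cH=\cF$. This is legitimate because $\cF$ is a canonical foliation of general type on the smooth projective surface $X$. It gives $\vol(K_{\cF})\geq 1/(\delta_2(\cF)^2(1+\delta_2(\cF)))$, with $\delta_2(\cF)<\infty$ because $\kappa(\cF)\geq1$. Combining the two steps yields the stated bound.

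The only point requiring care, and it is minor, is that the volume is defined in the paper through round-downs of $\mathbb R$-divisors, and that \eqref{equ:volume-ps-index} is stated for canonical foliations. Both are satisfied by the hypotheses of Theorem~\ref{thm:adjoint-volume-bounds}; the tangency and coefficient assumptions are not used.
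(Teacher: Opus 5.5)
Your proposal is correct and follows the paper's own argument: monotonicity of volume gives $\vol(K_{\cF}+\Delta)\geq\vol(K_{\cF})$ because $K_{\cF}$ is big and $\Delta\geq0$, and \eqref{equ:volume-ps-index} applied to the canonical foliation $\cF$ of general type then gives the bound. Your two justifications of the monotonicity step, via sections of $mK_{\cF}+\lfloor m\Delta\rfloor$ and via maximality of the positive part, are both valid; the paper simply invokes monotonicity directly.
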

\begin{proof}
Since $K_{\cF}$ is big and $\Delta\geq0$, monotonicity of volume gives
\[
  \vol(K_{\cF}+\Delta)
  \geq
  \vol(K_{\cF}).
\]
The canonical-volume estimate of \eqref{equ:volume-ps-index} gives
\[
  \vol(K_{\cF})
  \geq
  \frac{1}{
    \delta_2(\cF)^2
    \bigl(1+\delta_2(\cF)\bigr)
  },
\]
and the assertion follows.
\end{proof}
\begin{proof}[Proof of Theorem~\ref{thm:adjoint-volume-bounds}]
The cases $\kappa(\cF)=0$ and $1$ follow from
Propositions~\ref{prop:kappa-zero-index-bound} and
\ref{prop:kappa-one-volume-bound}, respectively.  The uniform
constants are supplied by Corollary~\ref{cor:kappa-zero-uniform-bound}
and the bound $\delta_2(\cF)\leq42$ used in
Proposition~\ref{prop:kappa-one-volume-bound}.  The case
$\kappa(\cF)=2$ is Proposition~\ref{prop:kappa-two-volume-bound}.
\end{proof}
\subsection{Application to birational automorphism groups}
We now return to the quotient construction.  The covering formula
identifies the canonical volume upstairs with the adjoint volume of the
tangency-free quotient pair, so the preceding estimates translate
directly into bounds for the birational automorphism group.
\begin{proof}[Proof of Theorem~\ref{thm:main}]
Let $(Y,\cG)$ be a normal projective model of the birational quotient
$(X,\cF)/G$, and let $X'$ be the normalization of $Y$ in
$\mathbb C(X)$.  Then
\[
  q\colon(X',\cF')\longrightarrow(Y,\cG)
\]
is a finite Galois morphism with Galois group $G$, where $\cF'$ is the
saturated transform of $\cF$.  Apply
Corollary~\ref{cor:Galois-cover} to $q$.  It gives a smooth tangency-free quotient pair
$(\bar Y,\bar\cG,\bar\Delta)$ and a smooth birational model
$(\widetilde X,\widetilde\cF)$ of $(X',\cF')$ such that
\begin{equation}
  \vol(K_{\widetilde\cF})
  =|G|\,\vol(K_{\bar\cG}+\bar\Delta).
  \label{eq:main-theorem-covering-volume}
\end{equation}
Since $\widetilde\cF$ is birational to the original canonical
foliation $\cF$, the birational invariance of the canonical volume gives
\[
  \vol(K_{\widetilde\cF})=\vol(\cF).
\]
Moreover,
\[
  \kappa(\bar\cG)=\kappa(\cG),
  \qquad
  \delta_r(\bar\cG)=\delta_r(\cG),
  \qquad
  \tang(\bar\cG,\bar\Delta_{\red})=0,
\]
and $K_{\bar\cG}+\bar\Delta$ is big by
\eqref{eq:main-theorem-covering-volume}.  Moreover, the components of
$\bar\Delta$ are non-$\bar\cG$-invariant and
$\bar\Delta$ has standard coefficients:
\[
  \bar\Delta
  =\sum_i\left(1-\frac1{n_i}\right)B_i,
  \qquad n_i\geq2.
\]
Theorem~\ref{thm:adjoint-volume-bounds} therefore applies to the
quotient pair.
If $\kappa(\cG)=0$, then
\[
  \vol(K_{\bar\cG}+\bar\Delta)
  \geq
  \frac1{4\delta_1(\cG)}.
\]
Equation~\eqref{eq:main-theorem-covering-volume} therefore gives
\[
  |G|
  \leq
  4\delta_1(\cG)\,\vol(\cF).
\]
If $\kappa(\cG)=1$, Theorem~\ref{thm:adjoint-volume-bounds} gives
\[
  \vol(K_{\bar\cG}+\bar\Delta)
  \geq
  \frac3{4\delta_2(\cG)},
\]
and hence
\[
  |G|
  \leq
  \frac43\delta_2(\cG)\,\vol(\cF).
\]
Finally, if $\kappa(\cG)=2$, then
\[
  \vol(K_{\bar\cG}+\bar\Delta)
  \geq
  \frac{1}{
    \delta_2(\cG)^2
    \bigl(1+\delta_2(\cG)\bigr)
  },
\]
which yields
\[
  |G|
  \leq
  \delta_2(\cG)^2
  \bigl(1+\delta_2(\cG)\bigr)
  \,\vol(\cF).
\]
\end{proof}
\begin{proof}[Proof of Corollary~\ref{cor:uniform-automorphism-bounds}]
If $\kappa(\cG)=0$, then $\delta_1(\cG)\leq12$; if
$\kappa(\cG)=1$, then $\delta_2(\cG)\leq42$.  The first two cases of
Theorem~\ref{thm:main} therefore give
\[
  |G|\leq48\,\vol(\cF)
  \qquad\text{and}\qquad
  |G|\leq56\,\vol(\cF),
\]
respectively.
In either case $G$ is nontrivial, since otherwise $\cG=\cF$ would have
Kodaira dimension two.  Hence $|G|\geq2$.  Combining this with
\eqref{eq:main-theorem-covering-volume} and the uniform adjoint-volume
bounds gives
\[
  \vol(\cF)\geq\frac1{24}
  \qquad\text{or}\qquad
  \vol(\cF)\geq\frac1{28},
\]
according as $\kappa(\cG)=0$ or $1$.
\end{proof}

\section*{Acknowledgments}

The author would like to express his sincere gratitude to Professors
Sheng-Li Tan, Mao Sheng, Jun Lu, and Xin L{\"u} for their longstanding
encouragement and support.

\end{document}